\begin{document}

\title{On the Worst-Case Analysis of Cyclic Block Coordinate Descent type Algorithms}


\author{Yassine Kamri        \and
        François Glineur \and Julien M. Hendrickx  \and Ion Necoara 
}


\institute{Y. Kamri, F. Glineur, J.M Hendrickx \at INMA, UCLouvain, Louvain-La-Neuve, Belgium.\\ \email{\{yassine.kamri, julien.hendrickx, francois.glineur\}@uclouvain.be}
\and
           I. Necoara \at
              Automatic Control and Systems Engineering Department,  University Politehnica  Bucharest and  Gheorghe Mihoc-Caius Iacob Institute of Mathematical Statistics and Applied Mathematics of the  Romanian Academy,  Bucharest, Romania.\\
              \email{ion.necoara@upb.ro}  
}

\date{Received: date / Accepted: date}

\maketitle

\abstract{We study the worst-case behavior of Block Coordinate Descent (BCD) type algorithms for unconstrained minimization of coordinate-wise smooth convex functions. This behavior is indeed not completely understood, and the practical success of these algorithms is not fully explained by current convergence analyses. We extend the recently proposed Performance Estimation Problem (PEP) approach to convex coordinate-wise smooth functions by proposing necessary interpolation conditions. We then exploit this to obtain improved numerical upper bounds on the worst-case convergence rate of three different BCD algorithms, namely Cyclic Coordinate Descent~\hyperref[alg:CCD]{(CCD)}, Alternating Minimization~\hyperref[alg:AM]{(AM)} and a Cyclic version of the Random Accelerated Coordinate Descent in~\cite{fercoq2015coord}~\hyperref[alg:CACD]{(CACD)}, substantially outperforming the best current bounds in some situations. In addition, we show the convergence of the~\hyperref[alg:CCD]{(CCD)} algorithm with more natural assumptions in the context of convex optimization than those typically made in the literature. Our methodology uncovers a number of phenomena, some of which can be formally established. These include a scale-invariance property of the worst case of~\hyperref[alg:CCD]{(CCD)} with respect to the coordinate-wise smoothness constants and a lower bound on the worst-case performance of~\hyperref[alg:CCD]{(CCD)} which is equal to the number of blocks times the worst-case of full gradient descent over the class of smooth convex functions. We also adapt our framework to the analysis of random BCD algorithms, and
present numerical results showing that the standard acceleration scheme in~\cite{fercoq2015coord} appears to be inefficient for deterministic algorithms.
}

\tableofcontents
\keywords{Coordinate-wise smooth convex minimization, performance estimation problem,  convergence analysis. }

\section{Introduction}
Block coordinate-wise descent (BCD) algorithms, which update a single block of coordinates at each update, constitute a widely used class of methods in modern optimization. Indeed, with the widespread availability of data, the scale of modern large-scale optimization problems is constantly increasing, making standard full-gradient optimization methods computationally expensive. Fortunately, many of these problems exhibit a block coordinate structure, making them well-suited for BCD algorithms. These methods can generally be classified into three main categories based on how the blocks of coordinates are selected and updated~\cite{Wright2015}:
\begin{enumerate}
    \item \textbf{Gauss-Southwell methods}, which greedily select the coordinates that lead to the largest improvement.
    \item \textbf{Randomized methods}, which select coordinates according to a probability distribution.
    \item \textbf{Cyclic methods}, which update the coordinates in a predefined cyclic order.
\end{enumerate}
Although greedy methods can perform well, their update rule typically requires access to full gradients to identify which block of coordinates has the largest gradient norm and will lead to the most significant improvement. As a result, randomized and cyclic methods have been more extensively studied and utilized. The worst-case convergence analysis of random coordinate descent methods has proven to be easier than that of their deterministic counterparts. Sampling coordinates with replacement from a suitable probability distribution ensures that the expectation of each coordinate step corresponds to the full gradient, making the analysis largely similar to that of full gradient descent. Consequently, numerous random coordinate descent algorithms with theoretical guarantees have been proposed for convex optimization problems, including accelerated and proximal variants for various probability distributions~\cite{nesterov2012Coords,Lin2015ccd,fercoq2015coord,Diakonikolas2018ccd,Allen2016ccd,Hanzely2019ccd,NesterovStich2017}. However, their performance guarantees hold only in expectation or with high probability, and the sampling technique can be computationally costly specially in high-dimension settings.
 This motivates the study of cyclic block coordinate methods, which are often simpler and more efficient to implement in practice but significantly harder to analyze. The primary challenge lies in establishing a connection between the block of coordinates updated at each step and the full gradient. Some convergence results are already known for cyclic coordinate descent, but they are often obtained under restrictive assumptions, such as the isotonicity of the gradient~\cite{Saha2013cdd}, or with worst-case bounds that are quite conservative and under initial conditions that are not entirely standard for unconstrained convex optimization~\cite{beck2013CCD,Hong2017}. Stronger convergence results exist for some specific cases, such as for quadratic and strongly convex optimization problems~\cite{Sun2015coord,Li2016ccd,Wright2020ccd,Mert2017ccd,Goujaud2022cyclic}.

\medskip

\noindent This paper introduces an extension of the Performance Estimation Problem (PEP) framework, which aims to facilitate the worst-case convergence analysis of block coordinate descent algorithms. The core idea of the PEP framework involves employing semidefinite programming (SDP) to compute convergence guarantees for first-order methods. The original concept of Performance Estimation Problems was first presented in~\cite{drori2014perf}, and subsequent developments were made in~\cite{taylor2017smooth}, where convex interpolation was used to ensure the computation of exact convergence rates. Several extensions of the PEP framework were proposed notably for the analysis of decentralized algorithms~\cite{colla2021}, composite optimization with linear mappings~\cite{Bousselmi2024operators}, adaptive methods~\cite{dasgupta2024nonlinear}, min-max problems~\cite{gorbunov2023minmax}, fixed-points iterations~\cite{park2022fixedpoint}, etc. Another relevant approach to PEP, first proposed in~\cite{lessard2016IQC}, involves conducting a worst-case convergence analysis using principles of control theory and seeking appropriate Lyapunov functions by solving SDPs. In~\cite{taylor2019stochastic}, the authors extend this Lyapunov-based approach to analyze random coordinate descent and provide an optimal convergence bound with respect to the class of Lyapunov functions used.

\medskip

\noindent \textbf{Related work.} As noted previously, cyclic block coordinate descent has received less attention compared to its random counterparts. In the context of unconstrained smooth convex minimization, Beck and Tetruashvili~\cite{beck2013CCD} established a sublinear convergence rate. However, this result relies on the assumption that the set $S = \{ x \in \mathbb{R}^d \mid f(x) \leqslant f(x^{(0)}) \}$ is compact, which implies that the distances between the iterates of a BCD algorithm after each full cycle and the optimal points of the function are bounded. This is a stronger assumption than the one usually made in convex optimization, which only requires that the distance between the first iterate and an optimal point of the function be bounded.
 Additionally, this rate exhibits two behaviors that appear conservative compared to empirical observations: a cubic dependence on the number of blocks $p$ and a dependence on the ratio ${L_{\max}}$/${L_{\min}}$ , where $L_{\max}$ and $L_{\min}$ are the maximal and minimal block coordinate-wise smoothness constants, that would suggests that cyclic coordinate descent performs relatively worse on functions with highly non-uniform coordinate-wise smoothness constants. In~\cite{Wang}, a similar convergence rate is derived and extended to more general settings (proximal methods for composite optimization). In~\cite{Hong2017}, the authors derive a slightly better convergence rate than in~\cite{beck2013CCD} but assume that the objective function admits second-order derivatives.

\medskip

\noindent In~\cite{shi2017coords} the performance estimation problem was used to establish worst-case convergence bounds for cyclic coordinate descent. Although the obtained bound is significantly better than the bound in~\cite{beck2013CCD}, it is established only for block of coordinates of the same dimension and equal block coordinate-wise smoothness constants. Our approach will be valid for arbitrary smoothness constants and is dimension free with respect to each block, i.e. the bounds we obtain here are valid for blocks of arbitrary dimensions. Moreover their bound~\cite[Theorem 3.1]{shi2017coords} contradicts the lower bound on the worst case of cyclic coordinate descent we establish in this paper and thus is false. In~\cite[Appendix I]{taylor2019stochastic}, the authors use an approach similar to Performance Estimation Problems to derive a worst-case convergence bound for random coordinate descent. They achieve this by automatically designing optimal Lyapunov certificates within the considered class of Lyapunov functions by solving a semidefinite program. In a related precedent work presented in~\cite{kamri}, the authors focused on the analysis of block coordinate descent algorithms when applied to (globally) smooth convex functions, and derived upper and lower bounds on the worst-case of these algorithms over the class of convex functions with coordinate-wise Lipschitz gradients. This framework was then slightly improved by~\cite{hadi}, and extended to treat worst-case performances for the specific class of coordinate-wise functions that are also quadratic. In this paper, we present a Performance Estimation Problem framework for analyzing BCD algorithms over the class of coordinate-wise smooth functions. This enables us to derive a tighter worst-case upper bound than the one we obtained in our previous work~\cite{kamri}, where we derived an upper bound on a larger class of globally smooth convex functions that includes the coordinate-wise smooth convex functions generally considered in the literature and in this paper as well. We also improve upon the framework in~\cite{hadi}; indeed, we are able to obtain a tighter worst-case upper bound by using a more suitable characterization of our class of functions in the Performance Estimation Problem.

\medskip

\noindent \textbf{Contributions and Paper Organization.} Our main contributions are as follows:\\
(i) We establish in Section~\ref{sec:intro} that the necessary conditions characterizing the class of block coordinate-wise smooth convex functions in~\cite{shi2017coords} are also sufficient. These conditions are of interest because they simultaneously characterize convexity and block coordinate-wise smoothness, making them particularly suitable for deriving necessary interpolation conditions. We build upon those interpolation conditions to derive a tractable PEP framework for BCD algorithms and show the flexibility of this framework by analyzing in Section~\ref{sec:num_exp} cyclic coordinate descent~\hyperref[alg:CCD]{(CCD)}, alternating minimization~\hyperref[alg:AM]{(AM)} and a cyclic variant of a random accelerated coordinate descent~\hyperref[alg:CACD]{(CACD)} with respect to different initial assumptions.

\noindent (ii) We observe numerically that the worst-case of the cyclic coordinate descent in our setting exhibits some nice properties that we are able to establish formally in Section~\ref{sec:BCD_alg_wc}, namely a scale invariance property with respect to the coordinate-wise smoothness constants and a lower bound on the performance after $K$ cycles of $p$-block cyclic coordinate descent, which equals to $p$ times the exact worst-case of full gradient descent after $pK$ gradient steps. We also derive a new descent lemma for cyclic coordinate descent and a bound on the residual gradient squared norm.

\noindent (iii) We report improved numerical sublinear bounds on the worst-case convergence rate for several types of block coordinate descent (BCD) algorithms. In Section~\ref{sec:num_bound_ccd}, we show that for cyclic block coordinate descent~\hyperref[alg:CCD]{(CCD)}, our bound significantly outperforms the best known analytical result from~\cite{beck2013CCD}. In Section~\ref{sec:AM}, we analyze the alternating minimization algorithm~\hyperref[alg:AM]{(AM)} and observe that the bound from~\cite{beck2013CCD} appears to be asymptotically tight. In Section~\ref{sec:opt_steps}, we provide numerical estimates of the optimal relative step sizes for~\hyperref[alg:CCD]{(CCD)} based on our bounds, which are shorter than those in the full gradient case. We further show in Section~\ref{sec:blocks} that our upper bound on the worst-case convergence rate of CCD grows linearly with the number of blocks. Moreover, in Section~\ref{sec:num_desc}, we adapt our PEP framework to derive an optimization-based lemma that enables the computation of semi-analytical bounds with simple expressions for CCD in a more computationally efficient manner.

\noindent (iv) We introduce in Section~\ref{sec:num_exp} a simplified initial assumption for analyzing cyclic coordinate descent based only on bounding the distance between the initial and optimal points with respect to a scaled norm. Typically, worst-case analysis of BCD algorithms requires stronger initial assumptions that imply bounded distances between all iterates and optimal points. We demonstrate convergence under this simpler assumption and show that the bound we obtain in this setting is only slightly worse than the one we obtain considering the standard initial assumptions used in the literature for BCD algorithms. Moreover under the usual initial assumptions used for the analysis of BCD algorithms, we show that the bound on the worst-case obtained by PEP is significantly better than the best known analytical bounds~\cite{beck2013CCD}.

\noindent (v) We present Section~\ref{sec:CACD} numerical evidence that the acceleration scheme provided in~\cite{fercoq2015coord} for random coordinate descent is less efficient in a deterministic context, i.e., its rate of convergence appears to be slower than $\mathcal{O}(\frac{1}{K^2})$, where $K$ is the number of cycles.

\noindent (vi) In order to show the significance of randomness in the acceleration of BCD algorithms, we extend in Section~\ref{sec:CACD} our PEP framework to handle random algorithms in order to compare their worst-case performance to that of deterministic~ones. We show that in the worst-case the expected performance of random accelerated coordinate descent is better than the performances of all the deterministic variants of the algorithm (i.e all possible orderings of blocks to update).

\section{Introdution and preliminaries}\label{sec:intro}

\subsection{Definition of coordinate-wise smooth functions and BCD algorithms}\label{sec:prob}
We consider the unconstrained convex optimization problem:
\begin{equation*}
    \min_{x \in \mathbb{R}^d} f(x),
\end{equation*}
where \( f: \mathbb{R}^d \to \mathbb{R} \) is a convex differentiable function that satisfies specific block coordinate-wise smoothness properties to be defined below. More specifically, we partition the space \( \mathbb{R}^d \) into \( p \) subspaces: $\mathbb{R}^d = \mathbb{R}^{d_1} \times \dots \times \mathbb{R}^{d_p}$. We introduce the corresponding selection matrices \( U_{\ell} \in \mathbb{R}^{d \times d_{\ell}} \), such that $(U_1, \dots, U_p) = \mathcal{I}_d$. For any \( x \in \mathbb{R}^d \), we can express \( x \) in terms of its block components as:
\begin{equation}\label{eqx_sep_p_blocks}
    x = (x^{(1)}, \dots, x^{(p)})^T, \quad \text{where} \quad x^{(\ell)} = U_{\ell}^{\top}x \in \mathbb{R}^{d_{\ell}}, \quad \forall \ell \in \{1, \dots, p\}.
\end{equation}
Thus, \( x \) can be rewritten as $x = \sum_{\ell=1}^{p} U_{\ell} x^{(\ell)}$.

\begin{definition}
The partial gradients of \( f \) at \( x \) are defined as:
\begin{equation*}
    \nabla^{(\ell)} f(x) \triangleq U_{\ell}^{\top} \nabla f(x) \in \mathbb{R}^{d_{\ell}}, \quad \forall \ell \in \{1, \dots, p\}.
\end{equation*}
\end{definition}

\noindent We now define the class of coordinate-wise smooth convex functions, denoted \( \mathcal{F}^{\text{coord}}_{0,\mathbf{L}}(\mathbb{R}^d) \):

\begin{definition}[Functional class \( \mathcal{F}^{\text{coord}}_{0,\mathbf{L}}(\mathbb{R}^d) \)]\label{def:coord_func}
Given a vector of nonnegative constants \( \mathbf{L} = (L_1, \dots, L_p) \) and a differentiable function \( f:\mathbb{R}^d \mapsto \mathbb{R} \), we say that \( f \) belongs to \( \mathcal{F}^{\text{coord}}_{0,\mathbf{L}}(\mathbb{R}^d) \) if and only if:
\begin{enumerate}
    \item The function $f$ satisfies: \begin{equation*}
        f(x_2) \geq f(x_1) + \langle \nabla f(x_1), x_2 - x_1 \rangle, \quad \forall x_1,x_2 \in \mathbb{R}^d.
    \end{equation*}

    \item For every block of coordinates, \( f \) satisfies the block coordinate-wise smoothness condition:
    \begin{equation*}
        \|\nabla^{(\ell)} f(x + U_{\ell} h^{(\ell)}) - \nabla^{(\ell)} f(x)\| \leq L_{\ell} \|h^{(\ell)}\|, \quad \forall x \in \mathbb{R}^d, \; \forall h^{(\ell)} \in \mathbb{R}^{d_{\ell}}.
    \end{equation*}
\end{enumerate}
\end{definition}
We provide in the next lemma a useful characterization of the functional class $\mathcal{F}^{\text{coord}}_{0,\mathbf{L}}(\mathbb{R}^d)$:
\begin{lemma}\cite[Section 2]{nesterov2012Coords} \label{lm:coord_upper_bound}
Let \( p \) be the number of coordinate blocks, \( \mathbf{L} = (L_1, \dots, L_p) \) a vector of nonnegative constants, and \( f: \mathbb{R}^d \mapsto \mathbb{R} \) a differentiable function. If \( f \in \mathcal{F}^{\text{coord}}_{0,\mathbf{L}}(\mathbb{R}^d) \), then:
\begin{enumerate}
    \item \( f \) satisfies the following inequality:
    \begin{equation*}
    f(x_2) \geq f(x_1) + \langle \nabla f(x_1), x_2 - x_1 \rangle, \quad \forall x_1, x_2 \in \mathbb{R}^d.
    \end{equation*}
    
    \item For all \( \ell \in \{1, \dots, p\} \), \( x \in \mathbb{R}^d \), and \( h^{(\ell)} \in \mathbb{R}^{d_{\ell}} \), \( f \) satisfies the quadratic upper bound:
    \begin{equation}\label{eq:coord_upper_bound}
    f(x + U_{\ell} h^{(\ell)}) \leq f(x) + \langle \nabla^{(\ell)} f(x), h^{(\ell)} \rangle + \frac{L_{\ell}}{2} \|h^{(\ell)}\|^2.
    \end{equation}
\end{enumerate}
\end{lemma}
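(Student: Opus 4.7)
The first claim is nothing more than the first item in Definition~\ref{def:coord_func}, so it follows by transcription and needs no argument.

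For the second claim, the plan is to reduce the multivariate question to a one-dimensional integration along the block direction. Specifically, I would fix $x \in \mathbb{R}^d$, $\ell \in \{1,\dots,p\}$ and $h^{(\ell)} \in \mathbb{R}^{d_\ell}$, and introduce the univariate auxiliary function $\phi(t) \triangleq f(x + t U_\ell h^{(\ell)})$ for $t \in [0,1]$. Since $f$ is differentiable, $\phi$ is differentiable with $\phi'(t) = \langle \nabla f(x + t U_\ell h^{(\ell)}), U_\ell h^{(\ell)} \rangle = \langle \nabla^{(\ell)} f(x + t U_\ell h^{(\ell)}), h^{(\ell)} \rangle$, where the last equality uses the identity $U_\ell^\top \nabla f = \nabla^{(\ell)} f$ from the definition of the partial gradient.

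The next step is to apply the fundamental theorem of calculus to write
\begin{equation*}
f(x + U_\ell h^{(\ell)}) - f(x) - \langle \nabla^{(\ell)} f(x), h^{(\ell)} \rangle = \int_0^1 \langle \nabla^{(\ell)} f(x + t U_\ell h^{(\ell)}) - \nabla^{(\ell)} f(x), h^{(\ell)} \rangle \, dt.
\end{equation*}
I would then bound the integrand by Cauchy--Schwarz and invoke the block coordinate-wise Lipschitz property from item~2 of Definition~\ref{def:coord_func} with increment $t h^{(\ell)}$, giving $\|\nabla^{(\ell)} f(x + t U_\ell h^{(\ell)}) - \nabla^{(\ell)} f(x)\| \leq L_\ell\, t\, \|h^{(\ell)}\|$. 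Integrating the resulting $L_\ell t \|h^{(\ell)}\|^2$ from $0$ to $1$ yields the desired $\frac{L_\ell}{2}\|h^{(\ell)}\|^2$ upper bound.

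There is no real obstacle here: the argument is a direct adaptation of the classical descent lemma, with the only subtlety being the careful bookkeeping between the full gradient $\nabla f$ and the partial gradient $\nabla^{(\ell)} f$ via the selection matrix $U_\ell$. The key observation making the reduction work is that the perturbation $U_\ell h^{(\ell)}$ only activates the $\ell$-th block, so the inner product in the line integral collapses entirely onto $\nabla^{(\ell)} f$, allowing the block smoothness hypothesis to be applied directly without any coupling to the other blocks.
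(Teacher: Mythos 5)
Your proposal is correct: the paper does not prove this lemma itself but cites it from Nesterov's work, and your argument is precisely the classical one-dimensional descent-lemma proof used there --- restrict $f$ to the line $t \mapsto x + t U_\ell h^{(\ell)}$, apply the fundamental theorem of calculus, and bound the integrand via Cauchy--Schwarz and the block-wise Lipschitz condition with increment $t h^{(\ell)}$. All steps, including the bookkeeping between $\nabla f$ and $\nabla^{(\ell)} f$ through $U_\ell$, are sound, so nothing further is needed.
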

\noindent For the analysis of first-order algorithms over the functional class \( \mathcal{F}^{\text{coord}}_{0,\mathbf{L}}(\mathbb{R}^d) \), we define the following two norms:

\begin{definition}
Given a vector \( \mathbf{L} = (L_1,\dots,L_p) \) of nonnegative constants, for any \( x,g \in \mathbb{R}^d \), we define the following weighted primal and dual norms:
\begin{equation}\label{eq:coords_norm}
\|x\|^2_L = \sum_{\ell = 1}^p L_\ell \|x^{(\ell)}\|^2 \quad \text{and} \quad 
\|g\|^{*2}_L = \sum_{\ell = 1}^p \frac{1}{L_\ell} \|g^{{(\ell)}}\|^2.
\end{equation}
\end{definition}
Note that the function \( \frac{\|.\|^{*2}_L}{2} \) is the Fenchel conjugate of \( \frac{\|.\|^2_L}{2} \).

\medskip

\noindent We will analyze fixed-step block coordinate-wise algorithms that can be defined as follows
\begin{definition}\label{def:BCD}
Given a number of blocks \( p \), an integer \( N \), and a block of coordinates selection strategy \( t: \{0,\dots,N-1\} \to \{1,\dots,p\} \) that associates each step \( i \) of the algorithm with the index \( t(i) \) of a block of coordinates, we say that \( \mathcal{M}^{\text{coord}} \) is a fixed-step BCD algorithm if, for all \( i \in \{1,\dots,N\} \), the iterate \( x_i \) can be expressed as:
\begin{equation}\label{eq:coord_algo_fixed}
    x_i = x_0 - \sum_{k = 0}^{i-1} \alpha_{i,k} U_{t(k)}\nabla^{(t(k))} f(x_k). 
\end{equation}
We denote by \( \mathcal{M}^{\text{coord}}_{\alpha} \) the fixed-step BCD algorithm parametrized by the step sizes \( \alpha_{i,k} \).
\end{definition}

\begin{remark}
When the block coordinate selection strategy \( t \) is deterministic, for instance for the widely used strategy $t(i) = (i \mod p) + 1$,
which defines a cyclic order selection for the blocks, the BCD algorithm is called \emph{deterministic}. Another possible choice is a random strategy, where the block of coordinates is sampled from a probability distribution. In this case, the BCD algorithm is called \emph{random}.
\end{remark}
One of the most commonly used deterministic block coordinate-wise descent algorithms in practice is the cyclic block coordinate descent algorithm~\hyperref[alg:CCD]{(CCD)}, which performs a partial gradient step at each iteration with respect to a block of coordinates selected in a cyclic order.
\begin{algorithm}[H]
  \caption{Cyclic Coordinate Descent (CCD)}\label{alg:CCD}
\begin{algorithmic}
\State \textbf{Input} function $f$ defined over $\mathbb{R}^d$ with $p$ blocks, starting point $x_0 \in \mathbb{R}^d$, number of cycles $K$ and step-sizes $\{\gamma_\ell\}_{\ell=1}^p$.
\State Define $N = pK$. For $i = 1 \dots N$ ,
\State \hspace{1.5cm} Set $\ell = \text{mod}(i,p) + 1$
\State \hspace{1.5cm} $x_i = x_{i-1} - \gamma_\ell U_\ell\nabla^{(\ell)}f(x_{i-1})$
\end{algorithmic}  
\end{algorithm}

\begin{theorem}\cite[Corollary 3.7]{beck2013CCD}\label{th:coord_beck_ccd}\label{th:ccd_beck} Given a number of blocks of coordinates $p$, a vector of nonnegative constant $\textbf{L} = (L_1, \dots, L_p)$ and a function $f \in \mathcal{F}^{\text{coord}}_{0,\textbf{L}}(\mathbb{R}^d)$, let $\{x_i\}_{i \in \{1,\dots,N\}}$, $N = pK$, be the sequence of iterates generated by $K$ cycles of $p$-block cyclic coordinate descent with step-sizes $\gamma_\ell = \frac{1}{L_\ell}, \; \forall \ell \in \{1,\dots,p\}$, then it holds that:
\begin{equation*}
f(x_{pK})-f_* \leqslant 4L_{max} \left(1+p^3 \frac{L^2_{max}}{L^2_{min}}\right)\frac{1}{K + \frac{8}{p}}R_a^2.
\end{equation*}
where $f_*$ is the minimal value of $f$ and $R_a = \min_{x_* \in X_*} \max_{k \in 1,\dots,K} \|x_{pK}-x_*\|$, $L_{\text{max}} = \max_{\ell \in {1,\dots,p}} L_\ell$ and $L_{\text{min}} = \min_{\ell \in {1,\dots,p}} L_\ell$.
\end{theorem}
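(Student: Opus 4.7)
The plan is to follow the classical three-step strategy for cyclic descent, assembling (i) a per-step descent inequality, (ii) a lemma that recovers the full gradient at the start of a cycle from the intra-cycle partial gradients, and (iii) a convexity argument combined with the bounded-distance quantity $R_a$, then closing with the standard $1/\Delta_{k+1} \geq 1/\Delta_k + c$ recursion.

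First, applying Lemma~\ref{lm:coord_upper_bound} to the update $x_{i+1} = x_i - \frac{1}{L_{t(i)}} U_{t(i)} \nabla^{(t(i))} f(x_i)$ yields the per-step descent
$$f(x_{i+1}) \leq f(x_i) - \frac{1}{2L_{t(i)}} \|\nabla^{(t(i))} f(x_i)\|^2.$$
Let $y_k := x_{pk}$. Summing this over one cycle gives a bound on $f(y_k) - f(y_{k+1})$ in terms of $\sum_{i=0}^{p-1} \frac{1}{L_{t(i)}} \|\nabla^{(t(i))} f(x_{pk+i})\|^2$. Monotone decrease also guarantees $f(y_{k+1}) \leq f(x_{pk+i}) \leq f(y_k)$ for every intra-cycle index $i$, a fact I would use repeatedly.

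Second, and this is the main obstacle, I would prove a \emph{cycle lemma} of the form
$$\|\nabla f(y_k)\|_L^{*2} \;\leq\; C(p, L_{\max}, L_{\min}) \sum_{i=0}^{p-1} \frac{1}{L_{t(i)}} \|\nabla^{(t(i))} f(x_{pk+i})\|^2,$$
with $C(p,L_{\max},L_{\min})$ of order $p^3 L_{\max}^2/L_{\min}^2$. The argument goes through the block-Lipschitz property
$$\|\nabla^{(t(i))} f(x_{pk+i}) - \nabla^{(t(i))} f(y_k)\| \leq L_{\max} \|x_{pk+i} - y_k\|,$$
followed by bounding $\|x_{pk+i} - y_k\|$ by the telescoping sum of the previous partial steps $\frac{1}{L_{t(j)}} \|\nabla^{(t(j))} f(x_{pk+j})\|$. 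Squaring and applying Cauchy–Schwarz over the $p$ cycle indices produces the cubic $p^3$ factor and the $L_{\max}^2/L_{\min}^2$ ratio; this is the numerically conservative step that our PEP framework later aims to improve.

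Third, I would combine the cycle lemma with convexity: $f(y_{k+1}) - f_* \leq \langle \nabla f(y_{k+1}), y_{k+1} - x_* \rangle \leq \|\nabla f(y_{k+1})\|_L^* \|y_{k+1} - x_*\|_L$, where $\|y_{k+1}-x_*\|_L \leq \sqrt{L_{\max}}\, R_a$ by definition of $R_a$. Together with the per-cycle decrease from steps one and two, this yields
$$\Delta_{k+1} \leq \Delta_k - \frac{\Delta_{k+1}^2}{4 L_{\max}(1 + p^3 L_{\max}^2/L_{\min}^2) R_a^2}, \qquad \Delta_k := f(y_k) - f_*.$$
Dividing by $\Delta_k \Delta_{k+1}$ produces the telescoping inequality $1/\Delta_{k+1} - 1/\Delta_k \geq 1/(4 L_{\max}(1 + p^3 L_{\max}^2/L_{\min}^2) R_a^2)$, and summing from $0$ to $K-1$ gives the stated rate, with the $8/p$ offset arising from a sharper treatment of the first cycle (using the initial descent $\Delta_0 \leq \tfrac{p}{8} \cdot \text{(descent)}$). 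The hard part is the cycle lemma in step two; everything else is standard convex-analysis bookkeeping.
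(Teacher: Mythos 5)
Your proposal reconstructs the full Beck--Tetruashvili argument, whereas the paper's proof of Theorem~\ref{th:coord_beck_ccd} is deliberately minimal: it cites \cite[Corollary 3.7]{beck2013CCD} and only verifies that the original argument survives when the level-set radius $R(x_0)=\max_{x_*\in X_*}\max_{x}\{\|x-x_*\|:\ f(x)\le f(x_0)\}$ is replaced by the smaller quantity $R_a=\min_{x_*\in X_*}\max_{k}\|x_{pk}-x_*\|$, since the distances $\|x_{pk}-x_*\|$ enter the proof only through the convexity step $f(y_{k+1})-f_*\le\|\nabla f(y_{k+1})\|\,\|y_{k+1}-x_*\|$. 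Your three-step skeleton (per-step sufficient decrease via Lemma~\ref{lm:coord_upper_bound}, a per-cycle gradient lemma, and the recursion on $1/\Delta_k$) is exactly the structure of the cited proof, and you correctly place the $R_a$ adaptation at the only point where it matters; in substance the two agree, you simply prove what the paper imports.

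One step of your sketch is imprecise and needs fixing if written out. The inequality $\|\nabla^{(t(i))}f(x_{pk+i})-\nabla^{(t(i))}f(y_k)\|\le L_{\max}\|x_{pk+i}-y_k\|$ does not follow from Definition~\ref{def:coord_func}: block-coordinate smoothness controls the variation of $\nabla^{(\ell)}f$ only along perturbations confined to block $\ell$, while $x_{pk+i}$ and $y_k$ differ in up to $i$ blocks. What the original proof uses is $\|\nabla^{(\ell)}f(x)-\nabla^{(\ell)}f(y)\|\le\|\nabla f(x)-\nabla f(y)\|\le L\|x-y\|$ with the \emph{global} smoothness constant $L\le\sum_\ell L_\ell\le pL_{\max}$, which does hold for convex block-smooth functions but requires its own short argument. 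This is also where the constant lives: the factor $p^3L_{\max}^2/L_{\min}^2$ arises as $(pL_{\max})^2/L_{\min}^2$ from the squared global Lipschitz constant, times one further factor of $p$ from Cauchy--Schwarz over the telescoped intra-cycle displacements; with only $L_{\max}$ in place of $pL_{\max}$ you would land on $p\,L_{\max}^2/L_{\min}^2$ and would not recover the stated bound. The remaining bookkeeping (monotone decrease, the $1/\Delta_{k+1}-1/\Delta_k$ telescoping, and the $8/p$ offset from the first cycle) is standard and correct.
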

\begin{proof}
This is adapted from \cite[Corollary 3.7]{beck2013CCD}. The only difference is that the proof in \cite{beck2013CCD} uses the inequalities:
\begin{equation*}
   \|x_{pk}-x_{*}\| \leqslant \max_{x_* \in X_*} \max_{x \in \mathbb{R}^{d}} \{\|x-x_*\|:\; f(x) \leqslant f(x_0)\}, \quad  \forall k \in 1\dots K.
\end{equation*}
However, the proof remains  valid, considering the optimal point $x_*$ such that
\begin{equation*}
  \|x_{pk}-x_*\| \leqslant \min_{x_* \in X_*} \max_{k \in 1,\dots,K} \|x_{pk}-x_{*}\|,  \quad \forall k \in 1\dots K, \; 
\end{equation*}
and using these inequalities instead. \qed 
\end{proof}

\noindent Here we presented cyclic coordinate descent with a slightly adapted convergence theorem in order to prove that the upper bound remain valid for comparison under the initial assumptions we make in the PEP framework we use to perform worst-case analysis of BCD algorithms (see \texttt{Setting ALL} defined in Section~\ref{sec:num_exp}).

\subsection{Performance Estimation Problems for BCD algorithms}\label{sec:PEP}
The central idea of PEP, first introduced in~\cite{drori2014perf}, is to cast the problem of estimating the worst-case convergence rate of an algorithm as an optimization problem itself. Following this approach, the worst-case performance of the fixed-step first-order algorithm $\mathcal{M}^{\text{coord}}_{\alpha}$ (see Definition~\ref{def:BCD}) over the functional class $\mathcal{F}^{\text{coord}}_{0,\mathbf{L}}(\mathbb{R}^d)$ is given by the following infinite-dimensional problem:

\begin{equation}\tag{i-PEP-coord}\label{i-PEP-coord}
\begin{aligned}
    w^{\text{coord}}(R, \textbf{L} , \mathcal{M}^{\text{coord}}_{\alpha}, N, \mathcal{I}, \mathcal{P}) &= \sup_{f, x_0, \ldots, x_N, x_*} \mathcal{P}(f, x_0, \ldots, x_N, x_*), \\
    \text{such that} \quad &f \in \mathcal{F}^{\text{coord}}_{0,\mathbf{L}}(\mathbb{R}^d), \\
    &x_* \text{ is optimal for } f, \\
    &x_1, \ldots, x_N \text{ generated from } x_0 \text{ by } \mathcal{M}^{\text{coord}}_{\alpha}\\
    &\mathcal{I}(f,x_0,\dots,x_N,x_*) \leq R.
\end{aligned}
\end{equation}
where $\mathcal{P}$ is a performance criterion, for instance $f(x_N)-f(x_*)$ and $\mathcal{I}(f,x_0,\dots,\allowbreak x_N,x_*) \leq R$ is an initial condition that ensures that the worst-case of the algorithm is bounded, for instance $\|x_0 - x_*\| \leq R$. Problem~\eqref{i-PEP-coord} is an infinite-dimensional problem over the functional class $\mathcal{F}^{\text{coord}}_{0,\mathbf{L}}(\mathbb{R}^d)$ which make it hard to solve directly under this form. To alleviate this issue, Taylor et al. introduced in~\cite{taylor2017smooth} the notion of interpolability of a finite set by a class of functions and provided a convex finite-dimensional reformulation of PEP for gradient descent over the class of smooth convex functions. The interpolability of a finite set of triplets by the functional class $\mathcal{F}^{\text{coord}}_{0,\textbf{L}}(\mathbb{R}^d)$ is defined as follows:
\begin{definition}[\( \mathcal{F}^{\text{coord}}_{0,\textbf{L}}(\mathbb{R}^d) \)-interpolability]\label{def:coord_interp}
Let \( I \) be a finite index set, and consider the set of triples \( \mathcal{S} = \{(x_i, g_i, f_i)\}_{i \in I} \), where \( x_i, g_i \in \mathbb{R}^d \) and \( f_i \in \mathbb{R} \) for all \( i \in I \). The set \( \mathcal{S} \) is \( \mathcal{F}^{\text{coord}}_{0,\textbf{L}}(\mathbb{R}^d) \)-interpolable if and only if there exists a function \( f \in \mathcal{F}^{\text{coord}}_{0,\textbf{L}}(\mathbb{R}^d) \) such that $g_i = \nabla f(x_i)$ and $f(x_i) = f_i$, $\forall i \in I$.
\end{definition}
Note that Problem~\ref{i-PEP-coord} only involves first-order information of the functional variable \( f \in \mathcal{F}^{\text{coord}}_{0,\mathbf{L}}(\mathbb{R}^d) \) at the iterates \( x_i, \; i \in \{0,1,\dots,N\} \) of the algorithm \( \mathcal{M}^{\text{coord}}_\alpha \), and at a minimizer \( x_* \) of \( f \). The notion of \( \mathcal{F}^{\text{coord}}_{0,\mathbf{L}}(\mathbb{R}^d) \)-interpolability then allows us to replace the functional variable \( f \) with a finite set of decision variables, $\mathcal{S}_N = \{x_i, g_i, f_i\}_{i \in I} \in (\mathbb{R}^d \times \mathbb{R}^d \times \mathbb{R})^{N+2}$ with $I = \{0,1,\dots,N,*\}$, that is \( \mathcal{F}^{\text{coord}}_{0,\mathbf{L}}(\mathbb{R}^d) \)-interpolable such that Problem~\eqref{i-PEP-coord} takes the finite-dimensional form
\begin{equation}\tag{f-PEP-coord}\label{f-PEP-coord}
\begin{aligned}
    w^{\text{coord}}(R, \textbf{L} , \mathcal{M}^{\text{coord}}_\alpha, N, \mathcal{I}, \mathcal{P})  = &\sup_{\mathcal{S}_N = \{x_i, g_i, f_i\}_{i \in I}} \mathcal{P}(\mathcal{S}_N),\\
     & \text{such that } \mathcal{S}_{N} \text{ is } \mathcal{F}^{\text{coord}}_{0,\mathbf{L}}(\mathbb{R}^d)\text{–interpolable}, \\
     & x_1, \ldots, x_N \text{ generated from } x_0 \text{ by } \mathcal{M}^{\text{coord}}_\alpha, \\
    & g_* = 0, \\
    & \mathcal{I}(\mathcal{S}_N)\leq R.
\end{aligned}
\end{equation}
\subsection{Necessary interpolation conditions}\label{sec:interp}
Now that we have a finite-dimensional formulation for our PEP, we need to characterize in a tractable manner the interpolability constraint on the set $\mathcal{S}_{N}$. To do so, we derive in the next lemma, an alternative characterization of the functional class $\mathcal{F}^{\text{coord}}_{0,\mathbf{L}}(\mathbb{R}^d)$

\begin{lemma}\label{lm:coord_lower_bound}
Let \( p \) be the number of blocks of coordinates, \( \mathbf{L} = (L_1, \dots, L_p) \) a vector of nonnegative constants, and \( f: \mathbb{R}^d \mapsto \mathbb{R} \) a differentiable function. If \( f \in \mathcal{F}^{\text{coord}}_{0,\mathbf{L}}(\mathbb{R}^d) \), then for all \( \ell \in \{1, \dots, p\} \) and for all \( x_1, x_2 \in \mathbb{R}^d \), \( f \) satisfies the quadratic lower bound:
\begin{equation}\label{eq:lower_coord}
f(x_2) \geq f(x_1) + \langle \nabla f(x_1), x_2 - x_1 \rangle + \frac{1}{2L_{\ell}} \|\nabla^{(\ell)} f(x_1) - \nabla^{(\ell)} f(x_2)\|^2.
\end{equation}
\end{lemma}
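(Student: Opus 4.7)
The plan is to adapt the classical ``co-coercivity'' proof used to derive the analogous inequality for globally $L$-smooth convex functions, but applied block by block using the coordinate-wise quadratic upper bound \eqref{eq:coord_upper_bound} furnished by Lemma~\ref{lm:coord_upper_bound}.

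First, fix $\ell \in \{1,\dots,p\}$ and arbitrary $x_1, x_2 \in \mathbb{R}^d$, and introduce the auxiliary function
\begin{equation*}
\phi(z) \;=\; f(z) - \langle \nabla f(x_1), z - x_1\rangle.
\end{equation*}
Since $\phi$ differs from $f$ only by an affine term, it inherits both convexity and block coordinate-wise smoothness from $f$, and in particular $\phi \in \mathcal{F}^{\text{coord}}_{0,\mathbf{L}}(\mathbb{R}^d)$. Its full gradient is $\nabla \phi(z) = \nabla f(z) - \nabla f(x_1)$, so the partial gradient satisfies $\nabla^{(\ell)}\phi(z) = \nabla^{(\ell)}f(z) - \nabla^{(\ell)}f(x_1)$, and $\nabla \phi(x_1) = 0$. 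By convexity, $x_1$ is therefore a global minimizer of $\phi$, giving the key inequality $\phi(x_1) \leq \phi(z)$ for every $z \in \mathbb{R}^d$.

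Next, I would exploit the block coordinate-wise upper bound \eqref{eq:coord_upper_bound} for $\phi$ applied at $x_2$ with the specific direction $h^{(\ell)} = -\frac{1}{L_\ell}\nabla^{(\ell)}\phi(x_2) \in \mathbb{R}^{d_\ell}$. Setting $z = x_2 + U_\ell h^{(\ell)}$ and carrying out the one-line calculation yields
\begin{equation*}
\phi(z) \;\leq\; \phi(x_2) - \frac{1}{2L_\ell}\|\nabla^{(\ell)}\phi(x_2)\|^2.
\end{equation*}
Chaining this with the minimizer inequality $\phi(x_1) \leq \phi(z)$ gives
\begin{equation*}
\phi(x_1) \;\leq\; \phi(x_2) - \frac{1}{2L_\ell}\|\nabla^{(\ell)}f(x_2) - \nabla^{(\ell)}f(x_1)\|^2,
\end{equation*}
and substituting the definition of $\phi$ and rearranging produces exactly \eqref{eq:lower_coord}.

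The only subtlety, and what I expect to be the main conceptual point rather than a genuine obstacle, is the choice of the ``test point'' $z = x_2 - \tfrac{1}{L_\ell}U_\ell \nabla^{(\ell)}\phi(x_2)$: the full-smoothness proof uses a full gradient step, while here I must restrict the perturbation to the single block $\ell$ so that the coordinate-wise upper bound applies with the constant $L_\ell$, despite the fact that the global minimizer $x_1$ need not be reachable from $x_2$ by a single-block move. This is not an issue because I only need $\phi(z)$ to upper-bound $\phi(x_1)$, and the one-block descent step from $x_2$ already decreases $\phi$ enough to yield the desired quadratic slack. No smoothness along directions other than block $\ell$ is ever invoked, which is precisely why the resulting lower bound features $L_\ell$ and the $\ell$-th partial gradient only.
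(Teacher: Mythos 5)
Your proof is correct, but it takes a genuinely different route from the paper's. You use the classical primal argument: shift $f$ by its linearization at $x_1$ to obtain $\phi \in \mathcal{F}^{\text{coord}}_{0,\mathbf{L}}(\mathbb{R}^d)$ with global minimizer $x_1$, then take a single block-$\ell$ gradient step from $x_2$ and invoke the coordinate-wise quadratic upper bound \eqref{eq:coord_upper_bound} to show $\phi$ drops by at least $\frac{1}{2L_\ell}\|\nabla^{(\ell)}\phi(x_2)\|^2$, which must still sit above $\phi(x_1)$. All steps check out — the affine shift indeed preserves both convexity and each block-wise Lipschitz constant, and your observation that only a one-block perturbation (hence only $L_\ell$-smoothness along block $\ell$) is ever needed is exactly the right point. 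The paper instead proceeds by duality: it encodes the block-$\ell$ upper bound via an indicator-augmented quadratic $p_\ell$, passes to Fenchel conjugates to obtain a subgradient-type inequality for $f^*$ valid for arbitrary perturbations $\Delta g$, and then specializes $\Delta g = \nabla f(x_1) - \nabla f(x_2)$ and applies Young's equality. Your argument is shorter and more elementary, relying only on Lemma~\ref{lm:coord_upper_bound} and first-order convexity; the paper's conjugate route proves a slightly stronger intermediate statement about $f^*$ and, more importantly, sets up the same conjugate machinery that is reused in the sufficiency construction of Theorem~\ref{th:coord_cond_suff_2}, where interpolating functions are built directly through their conjugates. (Both proofs implicitly assume $L_\ell > 0$ when dividing by $L_\ell$, so this is not a gap relative to the paper's own standard.)
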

\noindent The proof for this lemma is delayed in Appendix A.1

\medskip

\noindent We now prove that the characterizations of coordinate-wise smooth convex functions given in Lemmas~\ref{lm:coord_upper_bound} and~\ref{lm:coord_lower_bound} are necessary and sufficient conditions.

\begin{theorem}\label{th:coord_conds}
Let \( p \) be the number of blocks of coordinates, \( \mathbf{L} = (L_1, \dots, L_p) \) a vector of nonnegative constants, and \( f: \mathbb{R}^d \mapsto \mathbb{R} \) a differentiable function. The following statements are equivalent:
\begin{enumerate}
    \item \( f \in \mathcal{F}^{\text{coord}}_{0,\mathbf{L}}(\mathbb{R}^d) \).
    
    \item For all \( x_1, x_2 \in \mathbb{R}^d \),
    \begin{equation*}
        f(x_2) \geq f(x_1) + \langle \nabla f(x_1), x_2 - x_1 \rangle,
    \end{equation*}
    and for all \( \ell \in \{1, \dots, p\} \), \( x \in \mathbb{R}^d \), and \( h^{(\ell)} \in \mathbb{R}^{d_{\ell}} \),
    \begin{equation*}
        f(x + U_{\ell} h^{(\ell)}) \leq f(x) + \langle \nabla^{(\ell)} f(x), h^{(\ell)} \rangle + \frac{L_{\ell}}{2} \|h^{(\ell)}\|^2.
    \end{equation*}
    
    \item For all \( \ell \in \{1, \dots, p\} \) and \( x_1, x_2 \in \mathbb{R}^d \),
    \begin{equation*}
        f(x_2) \geq f(x_1) + \langle \nabla f(x_1), x_2 - x_1 \rangle + \frac{1}{2L_{\ell}} \|\nabla^{(\ell)} f(x_1) - \nabla^{(\ell)} f(x_2)\|^2.
    \end{equation*}
\end{enumerate}
\end{theorem}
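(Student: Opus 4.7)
The plan is to close the cycle of implications. Lemmas~\ref{lm:coord_upper_bound} and~\ref{lm:coord_lower_bound} already give the forward implications $1 \Rightarrow 2$ and $1 \Rightarrow 3$ for free, so the remaining task is to establish one return arrow from each of $2$ and $3$ to $1$. I will take the shortest route $1 \Rightarrow 2 \Rightarrow 3 \Rightarrow 1$: promote the block quadratic upper bound of (2) to the strengthened lower bound of (3) by a Nesterov-style minimization argument, then symmetrize (3) to recover the Lipschitz condition of (1). Convexity, which appears in (1) and (2) and is implicit in (3), travels trivially around the cycle.

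For $2 \Rightarrow 3$, the idea is to fix $x_1$ and introduce the auxiliary function $\phi(y) = f(y) - \langle \nabla f(x_1), y\rangle$. Convexity of $f$ (first part of (2)) makes $\phi$ convex, and since $\nabla\phi(x_1) = 0$ the point $x_1$ is a global minimizer of $\phi$. Because $f$ and $\phi$ differ by a linear term, the block quadratic upper bound in (2) transfers verbatim to $\phi$: for any $x_2$, any block $\ell$ and any $h^{(\ell)} \in \mathbb{R}^{d_\ell}$,
\begin{equation*}
\phi(x_2 + U_\ell h^{(\ell)}) \leq \phi(x_2) + \langle \nabla^{(\ell)}\phi(x_2), h^{(\ell)}\rangle + \tfrac{L_\ell}{2}\|h^{(\ell)}\|^2.
\end{equation*}
Combining with $\phi(x_1) \leq \phi(x_2 + U_\ell h^{(\ell)})$ and minimizing the right-hand side over $h^{(\ell)}$ (optimal choice $h^{(\ell)} = -L_\ell^{-1}\nabla^{(\ell)}\phi(x_2)$) yields $\phi(x_1) \leq \phi(x_2) - \frac{1}{2L_\ell}\|\nabla^{(\ell)}\phi(x_2)\|^2$; unpacking $\phi$ and using $\nabla^{(\ell)}\phi(x_2) = \nabla^{(\ell)}f(x_2) - \nabla^{(\ell)}f(x_1)$ gives precisely (3).

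For $3 \Rightarrow 1$, convexity is immediate since the extra term $\tfrac{1}{2L_\ell}\|\nabla^{(\ell)}f(x_1)-\nabla^{(\ell)}f(x_2)\|^2$ in (3) is nonnegative. To recover the block Lipschitz bound, I will apply (3) twice with the roles of $x_1$ and $x_2$ swapped and sum the two inequalities to eliminate the function values, obtaining the cocoercivity-like estimate
\begin{equation*}
\tfrac{1}{L_\ell}\|\nabla^{(\ell)}f(x_1)-\nabla^{(\ell)}f(x_2)\|^2 \leq \langle \nabla f(x_2)-\nabla f(x_1), x_2 - x_1\rangle.
\end{equation*}
Specializing to $x_2 = x_1 + U_\ell h^{(\ell)}$ collapses the inner product to $\langle \nabla^{(\ell)}f(x_2)-\nabla^{(\ell)}f(x_1), h^{(\ell)}\rangle$; Cauchy--Schwarz and division by $\|\nabla^{(\ell)}f(x_1)-\nabla^{(\ell)}f(x_2)\|$ then deliver the Lipschitz inequality in Definition~\ref{def:coord_func}.

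The main delicacy lies in the minimization step of $2 \Rightarrow 3$: we rely on the fact that $\phi$ actually attains its infimum at $x_1$, which is guaranteed by convexity together with $\nabla\phi(x_1) = 0$, and on $L_\ell > 0$ so that the quadratic in $h^{(\ell)}$ is coercive. The degenerate case $L_\ell = 0$ must be treated separately: the block upper bound then forces $f$ to be affine along $U_\ell$, so $\nabla^{(\ell)}f$ is constant, the penalty term in (3) vanishes and both (1) and (3) reduce to the convexity inequality. Beyond this minor edge case, every manipulation is a direct block-separable adaptation of the classical equivalences for smooth convex functions.
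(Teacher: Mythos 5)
Your proof is correct, and it follows the same overall cycle $1 \Rightarrow 2 \Rightarrow 3 \Rightarrow 1$ as the paper, with an essentially identical argument for $3 \Rightarrow 1$ (symmetrize the inequality, sum to cancel the function values, restrict to a displacement along block $\ell$, then Cauchy--Schwarz and divide). The genuine difference is in the step $2 \Rightarrow 3$. The paper delegates this to Lemma~\ref{lm:coord_lower_bound}, whose proof (Appendix A.1) works in the dual: it encodes the block upper bound as $h_x \leq L_\ell p_\ell$ for an extended-valued quadratic $p_\ell$ supported on the block subspace $S_{\neq \ell}$, passes to Fenchel conjugates to reverse the inequality, computes $(L_\ell p_\ell)^*$ explicitly via the orthogonal projection $U_\ell U_\ell^{\top}$, and translates back using Young's equality. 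You instead run the primal Nesterov argument: anchor $x_1$, subtract the tangent plane to form $\phi(y) = f(y) - \langle \nabla f(x_1), y\rangle$, note that convexity and $\nabla \phi(x_1) = 0$ make $x_1$ a global minimizer of $\phi$, and minimize the block quadratic upper bound of $\phi$ over $h^{(\ell)}$ to produce the $\tfrac{1}{2L_\ell}\|\nabla^{(\ell)}\phi(x_2)\|^2$ gain. Both arguments are valid; yours is shorter and more elementary, while the paper's conjugate route is the block-wise adaptation of the corresponding result for $L$-smooth convex functions in Taylor's thesis and extends more systematically to other function classes (it is also the template reused in the sufficiency proof of Theorem~\ref{th:coord_cond_suff_2}). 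Your explicit handling of the degenerate case $L_\ell = 0$, where $\tfrac{1}{2L_\ell}$ is undefined and the function must be affine along block $\ell$, addresses a corner case the paper passes over silently.
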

\noindent The proof for this Theorem is delayed in Appendix A.2

\medskip

\noindent Compared to the upper-bound characterization in Lemma~\ref{lm:coord_upper_bound}, the lower bound condition~\eqref{eq:lower_coord} provides a more compact characterization of coordinate-wise smooth functions, combining convexity and smoothness for the \( \ell \)-th block of coordinates in a single inequality. Moreover the lower-bound~\eqref{eq:lower_coord} can be written between any pair of points $x_1,x_2 \in \mathbb{R}^d$ whereas the upper bound~\ref{lm:coord_upper_bound} can only be written between two points differing only along the $\ell$-th coordinate. Enforcing the lower bound condition~\eqref{eq:lower_coord} over a set of triplets $\mathcal{S} = \{(x_i, g_i, f_i)\}_{i \in I}$ gives us necessary interpolation conditions for this class $\mathcal{F}^{\text{coord}}_{0,\textbf{L}}(\mathbb{R}^d)$:
\begin{theorem}\label{th:interp_coord}
 Given $I$ a finite index set and a number of blocks of coordinates $p$. If the set $\mathcal{S} = \{(x_i,g_i,f_i\}_{i \in  I} \subset \mathbb{R}^{d} \times \mathbb{R}^{d} \times \mathbb{R}$ is $\mathcal{F}^{\text{coord}}_{0,\textbf{L}}(\mathbb{R}^d)$-interpolable, then
\begin{multline}\label{eq:coord_interp_conds}
    \forall i,j \in I,\; \forall \ell \in \{1,\dots,p\} \\ f_i \geqslant f_j + \sum^p_{\ell = 1} \langle g^{(\ell)}_{j},x^{(\ell)}_{i}-x^{(\ell)}_{j}\rangle + \frac{1}{2L_\ell} \|g^{(\ell)}_{i}-g^{(\ell)}_{j}\|^2.
\end{multline}
\end{theorem}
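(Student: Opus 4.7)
The plan is that Theorem~\ref{th:interp_coord} is an immediate corollary of Lemma~\ref{lm:coord_lower_bound}, obtained simply by unfolding the definition of $\mathcal{F}^{\text{coord}}_{0,\mathbf{L}}(\mathbb{R}^d)$-interpolability and applying the lower-bound characterization pointwise to each triple in the interpolation set. Since all the substantive work (establishing the quadratic lower bound from the definition of the class) has already been carried out in Lemma~\ref{lm:coord_lower_bound}, this statement really serves as a bridge between the functional characterization and the finite-dimensional PEP formulation.

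More concretely, I would first invoke Definition~\ref{def:coord_interp}: because $\mathcal{S}$ is $\mathcal{F}^{\text{coord}}_{0,\mathbf{L}}(\mathbb{R}^d)$-interpolable, there exists a function $f \in \mathcal{F}^{\text{coord}}_{0,\mathbf{L}}(\mathbb{R}^d)$ such that $f(x_i) = f_i$ and $\nabla f(x_i) = g_i$ for every $i \in I$. Then, for each fixed pair $i, j \in I$ and each block index $\ell \in \{1, \dots, p\}$, I would apply Lemma~\ref{lm:coord_lower_bound} to this $f$ with the assignment $x_1 \leftarrow x_j$ and $x_2 \leftarrow x_i$, yielding
\[
f(x_i) \geq f(x_j) + \langle \nabla f(x_j), x_i - x_j \rangle + \frac{1}{2L_\ell}\,\|\nabla^{(\ell)} f(x_i) - \nabla^{(\ell)} f(x_j)\|^2.
\]
Substituting the interpolation identities $f(x_k) = f_k$ and $\nabla f(x_k) = g_k$ (hence $\nabla^{(\ell)} f(x_k) = g_k^{(\ell)}$), and then splitting the inner product blockwise via~\eqref{eqx_sep_p_blocks}, that is $\langle g_j, x_i - x_j \rangle = \sum_{\ell=1}^{p} \langle g_j^{(\ell)}, x_i^{(\ell)} - x_j^{(\ell)}\rangle$, produces the claimed inequality for this particular choice of $i, j, \ell$.

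There is essentially no obstacle here: Lemma~\ref{lm:coord_lower_bound} already holds for arbitrary points $x_1, x_2 \in \mathbb{R}^d$ and arbitrary block indices $\ell$, so quantifying over all $i, j \in I$ and all $\ell \in \{1, \dots, p\}$ is free. The only mild subtlety worth flagging is that the theorem asserts only \emph{necessary} conditions — sufficiency is not claimed and is in fact a genuinely harder question that would require constructing an interpolating function from the data $\{(x_i, g_i, f_i)\}_{i \in I}$. For the PEP relaxation used in the rest of the paper, necessity is precisely what is needed to produce a valid (tractable) outer approximation of the interpolability constraint, so the proof can stop here.
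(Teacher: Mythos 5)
Your proposal is correct and matches the paper's (implicit) argument exactly: the paper offers no separate proof for this theorem, presenting it as the immediate consequence of enforcing the lower bound of Lemma~\ref{lm:coord_lower_bound} pointwise on the triples of an interpolable set, which is precisely what you do.
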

We discuss in the next theorem the sufficiency of our interpolation conditions
\begin{theorem}\label{th:coord_cond_suff_2}
The interpolation conditions~\eqref{eq:coord_interp_conds} are necessary and sufficient for sets of cardinality $N = 2$, i.e., a set $\mathcal{S} = \{(x_1,g_1,f_1),(x_2,g_2,f_2)\} \subset \mathbb{R}^d \times \mathbb{R}^d \times \mathbb{R}$ is $\mathcal{F}^{\text{coord}}_{0,\textbf{L}}(\mathbb{R}^d)$-interpolable if and only if it satisfies the conditions~\eqref{eq:coord_interp_conds}, but they are not sufficient for sets of triplets of cardinality $N \geq 3$.
\end{theorem}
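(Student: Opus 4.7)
The theorem consists of two claims: sufficiency of~\eqref{eq:coord_interp_conds} for $N=2$ and non-sufficiency for $N\geq 3$. Necessity in both regimes is granted by Theorem~\ref{th:interp_coord}, so my plan is to construct an explicit interpolating function in the first case and to exhibit a concrete counterexample in the second.

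For the $N=2$ sufficiency, I would first reduce to the canonical case $(x_1,g_1,f_1)=(0,0,0)$ by subtracting the affine function $x\mapsto f_1+\langle g_1,x-x_1\rangle$ and translating so that $x_1$ becomes the origin; the task then reduces to producing $h\in\mathcal{F}^{\text{coord}}_{0,\mathbf{L}}(\mathbb{R}^d)$ with $h(0)=0$, $\nabla h(0)=0$, $h(\Delta)=u$ and $\nabla h(\Delta)=v$, where $\Delta=x_2$, $v=g_2$, $u=f_2$, and the reduced conditions read $u\geq \tfrac{1}{2L_\ell}\|v^{(\ell)}\|^2$ and $\langle v,\Delta\rangle-u\geq \tfrac{1}{2L_\ell}\|v^{(\ell)}\|^2$ for every block $\ell$. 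A naive separable ansatz $h(x)=\sum_\ell \phi_\ell(x^{(\ell)})$ with one-dimensional $L_\ell$-smooth convex $\phi_\ell$ would force the strictly stronger aggregate bound $u\geq \sum_\ell \tfrac{1}{2L_\ell}\|v^{(\ell)}\|^2$, so a coupled construction is required. I would build $h$ from Huber-type rank-one blocks $\psi_{a,c,\gamma}(x)=\tfrac{1}{2\gamma}(\max(0,\langle a,x\rangle+c))^2$, whose Hessian is rank one with $\ell$-th diagonal $(a^{(\ell)})^2/\gamma$: a sum of a small number of such terms (plus, if needed, a diagonal quadratic) gives enough degrees of freedom to match $(\Delta,v,u)$ while keeping the $\ell$-th diagonal block of the Hessian bounded by $L_\ell$, thereby ensuring $h\in\mathcal{F}^{\text{coord}}_{0,\mathbf{L}}(\mathbb{R}^d)$.

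For the non-sufficiency claim at $N\geq 3$, I would exhibit the following counterexample in $d=2$ with $p=2$, $d_1=d_2=1$ and $L_1=L_2=1$:
\[
(x_1,g_1,f_1)=\bigl((-1,0),(-1,0),\tfrac12\bigr),\ (x_2,g_2,f_2)=\bigl((1,0),(1,0),\tfrac12\bigr),\ (x_3,g_3,f_3)=\bigl((0,1),(0,1),0\bigr).
\]
A direct check of the twelve instances of~\eqref{eq:coord_interp_conds} (two ordered pairs per unordered pair of indices and two blocks each) shows that every pairwise condition holds, with several saturated. Suppose now that some $f\in\mathcal{F}^{\text{coord}}_{0,(1,1)}(\mathbb{R}^2)$ interpolated the three triples; since such an $f$ must satisfy~\eqref{eq:lower_coord} between \emph{any} two points of $\mathbb{R}^2$, I introduce the auxiliary point $z=(0,0)$ and set $b=\nabla^{(1)}f(z)$, $a=\nabla^{(2)}f(z)$, $p=f(z)$. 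Adding the two directions of~\eqref{eq:lower_coord} with $\ell=1$ between $z$ and $x_1$ yields $(1+b)^2\leq 1+b$, i.e.\ $b\in[-1,0]$; the same manipulation between $z$ and $x_2$ gives $b\in[0,1]$, hence $b=0$, after which the two endpoint inequalities pin $p=0$. The inequality~\eqref{eq:lower_coord} with $\ell=2$ between $z$ and $x_3$ then reads $p+a+\tfrac12(1-a)^2\leq 0$, and substituting $p=0$ reduces this to $a^2+1\leq 0$, a contradiction. Hence no interpolating $f$ exists, proving non-sufficiency for $N=3$; the counterexample extends to any $N>3$ by appending interpolable triples.

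The main obstacle will be the $N=2$ sufficiency construction. The standard Taylor-style maximum-of-regularized-affines used for globally smooth convex interpolation produces functions that are smooth with respect to the weighted norm $\|\cdot\|_L$, a strictly stronger condition than coord-wise smoothness that implicitly demands the aggregated bound; finding a construction that exploits only the weaker per-block hypothesis is the delicate step, for which Huber-type rank-one blocks seem the most promising. Once the counterexample for the non-sufficiency part is identified, its validation reduces to the routine computation outlined above.
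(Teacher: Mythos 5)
Your counterexample for $N\ge 3$ is correct and complete: all twelve instances of~\eqref{eq:coord_interp_conds} hold for your three triples, and the chain $b\in[-1,0]\cap[0,1]$, $p=0$, $a^2+1\le 0$ checks out. It is also a genuinely different route from the paper's, which takes the middle triple $x_2=(0,0)$, $g_2=(0,-1)$, $f_2=0$ and derives the contradiction by restricting $f$ to the line $y=1$ and invoking one-dimensional convexity and smoothness of that restriction. Your argument instead evaluates the two-point inequality~\eqref{eq:lower_coord} (valid between \emph{arbitrary} points by Lemma~\ref{lm:coord_lower_bound}) at one auxiliary point and closes purely algebraically; both are valid, and yours is arguably more mechanical to verify. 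The padding remark for $N>3$ is fine since duplicated triples satisfy~\eqref{eq:coord_interp_conds} trivially.

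The sufficiency claim for $N=2$ is, however, only a plan, and the key step is genuinely missing. Your reduction to the canonical data $(0,0,0)$ and $(\Delta,v,u)$, and your observation that a separable ansatz forces the aggregated inequality $u\ge\sum_\ell\tfrac{1}{2L_\ell}\|v^{(\ell)}\|^2$, are both correct and identify the right difficulty. But the proposed blocks $\psi_{a,c,\gamma}(x)=\tfrac{1}{2\gamma}(\max(0,\langle a,x\rangle+c))^2$ do not obviously close it: a single such block that is inactive at $0$ and matches $\nabla h(\Delta)=v$ forces $a$ parallel to $v$, and matching $h(\Delta)=u$ then requires $\langle v,\Delta\rangle\ge 2u$, which is \emph{not} implied by the reduced conditions (they only yield $\langle v,\Delta\rangle\ge u+\max_\ell\tfrac{1}{2L_\ell}\|v^{(\ell)}\|^2$, which can be strictly smaller than $2u$). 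Adding several such one-sided quadratics does not repair this, since their Hessian contributions accumulate on common activity regions while their gradients at $\Delta$ must still sum to $v$. The paper resolves this in the dual: it defines $f^*$ explicitly as a convex quadratic in $\lambda$ on the segment $\{\lambda g_2+(1-\lambda)g_1:\lambda\in[0,1]\}$ and $+\infty$ elsewhere, checks that $x_1,x_2$ are subgradients of $f^*$ at $g_1,g_2$ (so $f=(f^*)^*$ interpolates), and then verifies that $g\mapsto f^*(g)-\tfrac{1}{2L_\ell}\|g^{(\ell)}\|^2$ is convex for each $\ell$, which transfers to condition (3) of Theorem~\ref{th:coord_conds} for $f$. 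In the reduced primal picture that biconjugate is a \emph{two-sided clipped} Huber-type function of $\langle x,g_2-g_1\rangle$ (affine outside a slab), not a one-sided squared hinge; if you wish to keep a primal construction you need that two-sided clipping, and you still must verify coordinate-wise smoothness block by block, which is precisely what the dual convexity argument delivers.
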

\noindent The proof for this Theorem is delayed in Appendix A.3

\begin{remark}
Theorems~\ref{th:coord_cond_suff_2} suggests that necessary and sufficient interpolation cannot be enforced by pairwise conditions but instead should result from conditions involving at least $3$ points.
\end{remark}

\noindent By replacing the $\mathcal{F}^{\text{coord}}_{0,\mathbf{L}}(\mathbb{R}^d)$-interpolability constraint on $\mathcal{S}_N$ with the interpolation conditions~\eqref{eq:coord_interp_conds}, we obtain a more tractable relaxation of Problem~\eqref{f-PEP-coord}. Since the class $\mathcal{F}^{\text{coord}}_{0,\mathbf{L}}(\mathbb{R}^d)$ is invariant under additive shifts and translations, we may set $x_* = 0$ and $f^* = 0$ without loss of generality. This leads to:
\begin{equation}\tag{R-PEP-coord}\label{R-PEP-coord}
\begin{aligned}
    w^{\text{coord}}(R, \textbf{L} , \mathcal{M}^{\text{coord}}_\alpha, N, \mathcal{I}, \mathcal{P})  = &\sup_{\mathcal{S}_N = \{x_i, g_i, f_i\}_{i \in I}} \mathcal{P}(\mathcal{S}_N),\\
     & f_i \geqslant f_j + \sum^p_{\ell = 1} \langle g^{(\ell)}_{j},x^{(\ell)}_{i}-x^{(\ell)}_{j}\rangle + \\ &\frac{1}{2L_\ell} \|g^{(\ell)}_{i}-g^{(\ell)}_{j}\|^2,\; \forall i,j \in I,\; \forall \ell \in \{1,\dots,p\} \\
     & x_1, \ldots, x_N \text{ generated from } x_0 \text{ by } \mathcal{M}^{\text{coord}}_\alpha, \\
    & \{x_*,g_*,f_*\} = \{0,0,0\} \\
    & \mathcal{I}(\mathcal{S}_N)\leq R,
\end{aligned}
\end{equation}
where recall that $\mathcal{P}$ is a performance criterion such as $f_N - f_*$ and $\mathcal{I}(\mathcal{S}_N) \leq R$ is an initial condition that ensures that the worst-case of the algorithm is bounded for instance $\mathcal{I}(\mathcal{S}_N) = \|x_0 - x_*\|$. Problem~\eqref{R-PEP-coord} is a relaxation of Problem~\eqref{f-PEP-coord} because the interpolation conditions~\eqref{eq:coord_interp_conds} are proven to be necessary but not sufficient which means that the optimal solution of Problem~\eqref{R-PEP-coord} may not give the exact worst-case convergence rate, but it still provides a valid upper bound on this rate. We show in Section~\ref{sec:cvx_PEP} how Problem~\eqref{R-PEP-coord} can be reformulated as a convex semidefinite program following a similar technique as in~\cite{drori2014perf,taylor2017smooth}.

\section{Worst-case behavior of BCD algorithms}~\label{sec:BCD_alg_wc}

\noindent Before presenting our numerical results obtained using our PEP framework, we establish several general results on the worst-case behavior of cyclic coordinate descent. Specifically, we prove an invariance property of the worst-case performance with respect to the norm $\|.\|_L$ introduced earlier which simplifies the analysis of cyclic coordinate descent using PEP, a lower bound on the worst-case performance of~\hyperref[alg:CCD]{(CCD)} over the class of coordinate-wise smooth convex functions—equal to the number of blocks times the worst-case performance of gradient descent over smooth convex functions—, an improved descent lemma for~\hyperref[alg:CCD]{(CCD)}, and an upper bound on the residual gradient norm for the 2-block~\hyperref[alg:CCD]{(CCD)}.

\medskip

\noindent Usually, the convergence upper bounds derived for first-order algorithms in the literature are independent of the function dimension. Hence, recall that $\mathcal{F}^{\text{coord}}_{0,\mathbf{L}} = \bigcup_{d \in \mathbb{N}} \mathcal{F}^{\text{coord}}_{0,\mathbf{L}}(\mathbb{R}^d)$ is the set of all $\textbf{L}$-block coordinate-wise smooth convex functions, and $\mathcal{F}_{0,L} = \bigcup_{d \in \mathbb{N}} \mathcal{F}_{0,L}(\mathbb{R}^d)$ is the set of all $L$-smooth convex functions. We denote by $\mathcal{W}^{CCD(\gamma)}_{\textbf{L}}(p,K)$ the worst-case performance after $K$ cycles of the $p$-block~\hyperref[alg:CCD]{(CCD)} algorithm with step sizes $\gamma_\ell$ over the set of $\textbf{L}$-block coordinate-wise smooth functions $\mathcal{F}^{\text{coord}}_{0,\textbf{L}}$:
\begin{equation*}
    \mathcal{W}_{\textbf{L}}^{CCD(\gamma)}(p,K) = \max_{f \in \mathcal{F}^{\text{coord}}_{0,\textbf{L}}} \frac{f(x_{pK}) - f_*}{\|x_0 - x_*\|^2_{\textbf{L}}}.
\end{equation*}
We use the scaled norm $\|.\|_\textbf{L}$ in order to obtain and take advantage of an invariance property of the worst-case $\mathcal{W}_{\textbf{L}}^{CCD(\gamma)}$ with respect to vector of smoothness constants $\textbf{L}$. Similarly, we denote by $\mathcal{W}^{GD(\alpha)}_{L}(N)$ the worst-case performance after $N$ steps of gradient descent (GD) with step sizes $\alpha_i$ over the set of $L$-smooth convex functions $\mathcal{F}_{0,L}$:
\begin{equation*}
    \mathcal{W}_{L}^{GD(\alpha)}(N) = \max_{f \in \mathcal{F}_{0,L}} \frac{f(x_N) - f_*}{\|x_0 - x_*\|^2_L},
\end{equation*}
where $\|x_0 - x_*\|^2_L = L \|x_0 - x_*\|^2$.  
\subsection{Scale invariance and Lower Bound on the Worst-case Convergence Rate of (CCD)}
We begin by proving a scale-invariance property for the worst-case behavior of cyclic coordinate descent~\hyperref[alg:CCD]{(CCD)} with respect to the scaled norm $\|.\|_L$. Building on this result, we show that the $p$-block~\hyperref[alg:CCD]{(CCD)} algorithm has a convergence rate on coordinate smooth functions at least $p$ times worse than that of the standard full gradient method on smooth convex functions.

\begin{theorem}\label{th:coord_L_invar}
Given a vector of nonnegative constants $\textbf{L}$, the worst-case performance after $K$ cycles of $p$-block cyclic coordinate descent~\hyperref[alg:CCD]{(CCD)} with step sizes of the form $\{\frac{\gamma_\ell}{L_\ell}\}_{\ell = 1}^p$ over the class $\mathcal{F}^{\text{coord}}_{0,\textbf{L}}$ satisfies the following homogeneity property with respect to the vector of block coordinate-wise smoothness constants $\textbf{L}$:
\begin{equation*}
\mathcal{W}^{CCD(\frac{\gamma_\ell}{L_\ell})}_{\textbf{L}}(p,K) = \mathcal{W}^{CCD(\gamma_\ell)}_{(1,\dots,1)}(p,K)
\end{equation*}  
\end{theorem}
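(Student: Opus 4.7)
The plan is to reduce the general case to the unit-smoothness case via a bijective block-diagonal rescaling that commutes with the block structure. Define the linear map $T : \mathbb{R}^d \to \mathbb{R}^d$ by
\begin{equation*}
T = \sum_{\ell=1}^{p} \sqrt{L_\ell}\, U_\ell U_\ell^\top, \qquad \text{so that} \qquad (Tx)^{(\ell)} = \sqrt{L_\ell}\, x^{(\ell)}.
\end{equation*}
For any $f \in \mathcal{F}^{\text{coord}}_{0,\mathbf{L}}$ associate $\tilde f(y) = f(T^{-1}y)$. Using the chain rule and the identity $T U_\ell = \sqrt{L_\ell}\, U_\ell$, one gets $\nabla^{(\ell)} \tilde f(y) = \tfrac{1}{\sqrt{L_\ell}}\,\nabla^{(\ell)} f(T^{-1}y)$. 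A direct check shows that $\tilde f$ is convex and block coordinate-wise $1$-Lipschitz in each block, hence $\tilde f \in \mathcal{F}^{\text{coord}}_{0,(1,\dots,1)}$, and the map $f \mapsto \tilde f$ is a bijection between the two functional classes.

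Next I would show that CCD is equivariant under this rescaling. Let $\{x_i\}$ be the iterates of CCD on $f$ with step sizes $\gamma_\ell / L_\ell$ started at $x_0$, and let $\{y_i\}$ be the iterates of CCD on $\tilde f$ with step sizes $\gamma_\ell$ started at $y_0 = T x_0$ (using the same cyclic order). By induction, assuming $y_{i-1} = T x_{i-1}$ and writing $\ell = \ell_i$ for the block selected at step $i$,
\begin{equation*}
y_i \;=\; T x_{i-1} - \gamma_\ell\, U_\ell \cdot \tfrac{1}{\sqrt{L_\ell}} \nabla^{(\ell)} f(x_{i-1}) \;=\; T\!\left(x_{i-1} - \tfrac{\gamma_\ell}{L_\ell}\, U_\ell\, \nabla^{(\ell)} f(x_{i-1})\right) \;=\; T x_i,
\end{equation*}
where I used $T U_\ell = \sqrt{L_\ell}\, U_\ell$. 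The minimizers are likewise in correspondence, $y_* = T x_*$, and values coincide: $\tilde f(y_i) = f(x_i)$, $\tilde f_* = f_*$.

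Finally I would compare the PEP objective and the initial condition. On the function side, $\tilde f(y_{pK}) - \tilde f_* = f(x_{pK}) - f_*$. On the initial-condition side,
\begin{equation*}
\|y_0 - y_*\|^2 \;=\; \|T(x_0 - x_*)\|^2 \;=\; \sum_{\ell=1}^{p} L_\ell\, \|x_0^{(\ell)} - x_*^{(\ell)}\|^2 \;=\; \|x_0 - x_*\|_{\mathbf L}^2,
\end{equation*}
so the ratios defining $\mathcal{W}^{CCD(\gamma_\ell / L_\ell)}_{\mathbf L}(p,K)$ and $\mathcal{W}^{CCD(\gamma_\ell)}_{(1,\dots,1)}(p,K)$ are term-for-term equal along corresponding pairs $(f, x_0)$ and $(\tilde f, y_0)$. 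Since $f \mapsto \tilde f$ and $x_0 \mapsto T x_0$ are bijections between the feasible sets of the two PEPs, taking the supremum on each side yields the claimed equality.

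No significant obstacle is expected: the whole argument is a careful change of variables, and the only place where care is needed is keeping the factors of $\sqrt{L_\ell}$ consistent across the partial gradient transformation, the step-size rescaling, and the definition of the weighted norm $\|\cdot\|_{\mathbf L}$.
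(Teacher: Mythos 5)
Your proposal is correct and follows essentially the same route as the paper's proof: the paper also uses the block-diagonal rescaling $x \mapsto (\sqrt{L_1}\,x^{(1)},\dots,\sqrt{L_p}\,x^{(p)})$ (your map $T$), verifies that $\tilde f = f \circ T^{-1}$ lies in $\mathcal{F}^{\text{coord}}_{0,(1,\dots,1)}$, checks that the CCD iterates correspond under $T$, matches the weighted norms and function-value gaps, and concludes by the two-way correspondence between the classes. Your explicit operator notation for $T$ is merely a cleaner packaging of the same argument.
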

\begin{proof}
Given a function $f$ that belongs to $\mathcal{F}^{\text{coord}}_{0,\textbf{L}}(\mathbb{R}^d)$, define the function $\Tilde{f}: x \mapsto f\left(\frac{1}{\sqrt{L_1}} x^{(1)}, \dots, \frac{1}{\sqrt{L_p}} x^{(p)}\right)$. Then, $\Tilde{f}$ belongs to $\mathcal{F}^{\text{coord}}_{0,(1,\dots,1)}(\mathbb{R}^d)$. Indeed, since $f$ is convex, $\Tilde{f}$ is also convex, and we have:  
\begin{equation*}
    \nabla^{(\ell)} \Tilde{f} (x) = \frac{1}{\sqrt{L_\ell}} \nabla^{(\ell)} f \left(\frac{1}{\sqrt{L_1}} x^{(1)}, \dots, \frac{1}{\sqrt{L_p}} x^{(p)}\right),
\end{equation*}
which, by the $\textbf{L}$-block coordinate-wise smoothness of $f$, implies that for all $x \in \mathbb{R}^d$ and $h^{(\ell)} \in \mathbb{R}^{d_\ell}$,  
\begin{align*}
    \|\nabla^{(\ell)} \Tilde{f} (x + U_{\ell} h^{(\ell)}) - \nabla^{(\ell)} \Tilde{f} (x) \| &\leqslant \frac{1}{\sqrt{L_\ell}} L_\ell \left\|\frac{1}{\sqrt{L_\ell}} h^{(\ell)} \right\| \\
    &= \|h^{(\ell)}\|.
\end{align*}
Thus, $\Tilde{f}$ is $(1,\dots,1)$-block coordinate-wise smooth. Now, denote by $x_i$ the iterates of $CCD(\frac{\gamma_\ell}{L_\ell})$ on $f$ which satisfy $x_i = x_{i-1} - \frac{\gamma_\ell}{L_\ell} U_\ell \nabla^{(\ell)}f(x_{i-1})$, where $\ell$ is chosen cyclically as $\ell = \text{mod}(i,p) + 1$. Let $x_*$ be a minimizer of $f$, and define $\Tilde{x}_{i}$ by setting $\Tilde{x}_i^{(\ell)} = \sqrt{L_\ell} x_i^{(\ell)}$ and $\Tilde{x}_*^{(\ell)} = \sqrt{L_\ell} x_*^{(\ell)}$ for all $\ell \in \{1,\dots,p\}$. Since $x_*$ is a minimizer of $f$, $\Tilde{x}_*$ is a minimizer of $\Tilde{f}$, and it is easy to verify that $\|\Tilde{x}_0 - \Tilde{x}_*\|_{(1,\dots,1)} = \|x_0 - x_*\|_{\textbf{L}}$. Since the iterates $x_i$ satisfy  $x_i = x_{i-1} - \frac{\gamma_\ell}{L_\ell} U_\ell \nabla^{(\ell)} f(x_{i-1})$, it follows that $x_i$ and $x_{i-1}$ only differ in their $\ell^{\text{th}}$ block of coordinates and thus $\Tilde{x}_i$ and $\Tilde{x}_{i-1}$ also only differ in their $\ell^{\text{th}}$ block of coordinates and
\begin{equation*}
\begin{aligned}
 \Tilde{x}_i - \Tilde{x}_{i-1} =   \Tilde{x}^{(\ell)}_i - \Tilde{x}^{(\ell)}_{i-1} = \sqrt{L_{\ell}} (x^{(\ell)}_i - x^{(\ell)}_{i-1}) &= -\sqrt{L_{\ell}} \left(\frac{\gamma_\ell}{L_\ell} U_\ell \nabla^{(\ell)} f(x_{i-1})\right) \\
 &= - \frac{\gamma_\ell}{\sqrt{L_\ell}} U_\ell \nabla^{(\ell)} f(x_{i-1}).
\end{aligned}
\end{equation*}
Using the fact that $\nabla^{(\ell)} \Tilde{f}(\Tilde{x}_{i-1}) = \frac{1}{\sqrt{L_\ell}} \nabla^{(\ell)} f(x_{i-1})$,
we obtain $\Tilde{x}_i = \Tilde{x}_{i-1} - \gamma_\ell U_\ell \nabla^{(\ell)} \Tilde{f}(\Tilde{x}_{i-1})$, which corresponds to the iterates of~\hyperref[alg:CCD]{(CCD)}($\gamma_\ell$) applied to $\Tilde{f}$. Moreover, it is straightforward to verify that $\Tilde{f}(\Tilde{x}_{pK}) - \Tilde{f}(\Tilde{x}_{*}) = f(x_{pK}) - f(x_*)$. Thus, for any function $f \in \mathcal{F}^{\text{coord}}_{0,\textbf{L}}(\mathbb{R}^d)$, we have constructed a function $\Tilde{f} \in \mathcal{F}^{\text{coord}}_{0,(1,\dots,1)}(\mathbb{R}^d)$ such that $\hyperref[alg:CCD]{(CCD)}(\frac{\gamma_\ell}{L_\ell})$ applied to $f$ has the same performance as $\hyperref[alg:CCD]{(CCD)}(\gamma_\ell)$ applied to $\Tilde{f}$. Conversely, for any function $f \in \mathcal{F}^{\text{coord}}_{0,(1,\dots,1)}(\mathbb{R}^d)$, using a similar reasoning, we can show that for $\Tilde{f}: x \mapsto f(\sqrt{L_1} x^{(1)},\dots,\sqrt{L_p} x^{(p)})$, $K$ cycles of $\hyperref[alg:CCD]{(CCD)}(\frac{\gamma_\ell}{L_\ell})$ achieve the same performance on $\Tilde{f}$ as $K$ cycles of $\hyperref[alg:CCD]{(CCD)}(\gamma_\ell)$ on $f$. This finally gives us $\mathcal{W}^{CCD(\frac{\gamma_\ell}{L_\ell})}_{\textbf{L}}(p,K) = \mathcal{W}^{CCD(\gamma_\ell)}_{(1,\dots,1)}(p,K)$, which concludes the proof.
\qed \end{proof}

\begin{remark}
Note that, thanks to Theorem~\ref{th:coord_L_invar}, when using the initial condition $\|x_0 - x_*\|_{\textbf{L}}^2 \leq R_i^2$ in the PEP framework (Setting INIT.), we can, without loss of generality, restrict ourselves to $\textbf{L} = (1,\dots,1)$ and $R_i = 1$.
\end{remark}
\begin{theorem}\label{thm:coord_lower_bound}
Given a number of blocks $p$ and a vector of nonnegative constants $\textbf{L} = (L_1,\dots,L_p)$, the worst-case performance of~\hyperref[alg:CCD]{(CCD)} with step sizes $\{\frac{\gamma_\ell}{L_\ell}\}_{\ell=1}^{p}$ over the class $\mathcal{F}^{\text{coord}}_{0,\textbf{L}}$ is at least $p$ times worse than the worst-case after $pK$ steps of full gradient descent (GD) with step sizes $\alpha_i = \gamma_{i \text{ mod } p + 1}$ over the class $\mathcal{F}_{0,1}$:
\begin{equation*}
    \mathcal{W}^{CCD \left( \frac{\gamma_\ell}{L_\ell} \right)}_{\textbf{L}}(K,p) \geqslant p\mathcal{W}_1^{GD(\gamma_{i \text{ mod }p + 1})}(pK)
\end{equation*}
\end{theorem}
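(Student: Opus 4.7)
The plan is to construct, from any function $h \in \mathcal{F}_{0,1}$, a block coordinate-wise smooth function $f$ such that the trajectory of \hyperref[alg:CCD]{(CCD)} on $f$ exactly simulates the trajectory of (GD) on $h$ for $pK$ iterations, while inflating the performance ratio by a factor of $p$. By the scale invariance given in Theorem~\ref{th:coord_L_invar}, it is enough to prove the result for $\textbf{L}=(1,\dots,1)$, in which case the step sizes of \hyperref[alg:CCD]{(CCD)} become exactly $\gamma_\ell$ and the weighted norm $\|\cdot\|_{\textbf{L}}$ reduces to the Euclidean norm.

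Given $h \in \mathcal{F}_{0,1}(\mathbb{R}^{d'})$ with minimizer $z_*$ and an initial point $z_0$, I would define $f : (\mathbb{R}^{d'})^p \to \mathbb{R}$ by
\begin{equation*}
    f(x^{(1)},\dots,x^{(p)}) \;=\; h\!\left(\sum_{\ell=1}^p x^{(\ell)}\right).
\end{equation*}
Convexity of $f$ is immediate as a composition of the convex $h$ with a linear map. Since $\nabla^{(\ell)} f(x) = \nabla h\bigl(\sum_k x^{(k)}\bigr)$, a perturbation of block $\ell$ propagates into $\nabla h$ only through the total sum, so the $1$-smoothness of $\nabla h$ translates directly into block coordinate-wise smoothness with constants $L_\ell = 1$, placing $f$ in $\mathcal{F}^{\text{coord}}_{0,(1,\dots,1)}$.

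Next I would verify the simulation. Letting $y_i = \sum_\ell x_i^{(\ell)}$ and combining the update rule of \hyperref[alg:CCD]{(CCD)} with the identity $\nabla^{(\ell)} f(x_{i-1}) = \nabla h(y_{i-1})$ yields $y_i = y_{i-1} - \gamma_\ell \nabla h(y_{i-1})$, which is exactly one gradient descent step on $h$ with step size $\alpha_i = \gamma_{i\bmod p + 1}$. Choosing $x_0^{(\ell)} = z_0/p$ for every $\ell$ gives $y_0 = z_0$, so after $pK$ partial steps $y_{pK}$ coincides with the $pK$-th (GD) iterate $z_{pK}$ on $h$.

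The last step is to compute the ratio. Taking $x_*^{(\ell)} = z_*/p$ makes $x_*$ a minimizer of $f$ since $\nabla^{(\ell)} f(x_*) = \nabla h(z_*) = 0$, and a direct computation gives $\|x_0 - x_*\|^2_{\textbf{L}} = \sum_{\ell=1}^p \|(z_0-z_*)/p\|^2 = \|z_0-z_*\|^2/p$ while $f(x_{pK}) - f_* = h(z_{pK}) - h_*$. The performance ratio of \hyperref[alg:CCD]{(CCD)} on $f$ is therefore exactly $p$ times that of (GD) on $h$, and taking the supremum over admissible $h$ and $z_0$ yields the claimed inequality. I do not expect a serious obstacle here; the only delicate point is to get the factor of $p$ right, and it arises entirely from the redistribution $x_0^{(\ell)} = z_0/p$, which splits a single Euclidean distance $\|z_0-z_*\|^2$ into $p$ equal block contributions summing to $\|z_0-z_*\|^2/p$ rather than to $\|z_0-z_*\|^2$ or $p\|z_0-z_*\|^2$.
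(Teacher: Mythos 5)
Your construction $f(x)=h\bigl(\sum_{\ell}x^{(\ell)}\bigr)$ with the split initialization $x_0^{(\ell)}=z_0/p$ is exactly the paper's proof, including the reduction to $\textbf{L}=(1,\dots,1)$ via scale invariance and the key observation that the factor $p$ comes from $\|x_0-x_*\|^2=\|z_0-z_*\|^2/p$. The only cosmetic difference is that you take a supremum over all $h\in\mathcal{F}_{0,1}$ at the end, whereas the paper instantiates $h$ as a worst-case function for (GD) from the start; both are valid.
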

\begin{proof}
The idea of the proof is to construct a function \( \hat{f} \in \mathcal{F}^{\text{coord}}_{0,\mathbf{L}}\) such that the performance of~\hyperref[alg:CCD]{CCD}\(\left(\frac{\gamma_\ell}{L_\ell}\right)\) on \( \hat{f} \) is exactly \( p \) times the worst-case performance of (GD)\((\gamma_{i \bmod p})\) over the class $\mathcal{F}_{0,1}$. Thanks to the scale invariance property in Theorem~\ref{th:coord_L_invar}, we have 
$    \mathcal{W}^{CCD\left(\frac{\gamma_\ell}{L_\ell}\right)}_{\textbf{L}}(p,K) = \mathcal{W}^{CCD(\gamma_\ell)}_{(1,\dots,1)}(p,K)$. Thus, it suffices to prove that $\mathcal{W}^{CCD(\gamma_\ell)}_{(1,\dots,1)}(p,K) \geqslant p\mathcal{W}_1^{GD(\gamma_{i \text{ mod }p+1})}(pK)$. Consider a function $f \in \mathcal{F}_{0,1}$ on which (GD)($\gamma_{i \text{ mod }p+1}$) attains its worst-case performance, and let $d$ be its dimension. Denote by $x_*$ a minimizer of $f$, and choose $x_0 \in \mathbb{R}^d$ such that $\|x_0-x_*\|_1^2 \leqslant R^2$. Now, consider a block-coordinate decomposition of $\mathbb{R}^{pd}$ such that we can write \( x = (x^{(1)}, \dots, x^{(p)})^T \in \mathbb{R}^{pd} \), where each block of coordinates is of size $d$. Define the function \(\hat{f}\) by $\hat{f}(x) = f(x^{(1)} + \dots + x^{(p)})$, for all $x \in \mathbb{R}^{pd}$. Since \( f \) is a $1$-smooth convex function, the function \(\hat{f}\) belongs to \(\mathcal{F}^{\text{coord}}_{0, (1, \dots, 1)}\).  
For all \( x = (x^{(1)}, \dots, x^{(p)})^T \in \mathbb{R}^{pd} \), define \( s(x) = \sum_{\ell=1}^p x^{(\ell)} \), so that for all \( x \in \mathbb{R}^{pd} \), we have $\hat{f}(x) = f(s(x))$. Now, set the initial point for~\hyperref[alg:CCD]{(CCD)}($\gamma_\ell$) on \(\hat{f}\) as \( \hat{x}_0 \in \mathbb{R}^{pd} \) with $\hat{x}^{(\ell)}_0 = \frac{x_0}{p}, \quad \forall \ell \in \{1,\dots,p\}$. Since \( x_* \) is a minimizer of \( f \), the point \(\hat{x}_* \in \mathbb{R}^{pd}\) defined by $\hat{x}^{(\ell)}_* = \frac{x_*}{p}, \quad \forall \ell \in \{1,\dots,p\}$ is a minimizer of \( \hat{f} \). With these definitions, we have \( s(x_0) = x_0 \), \( s(x_*) = x_* \), and  
\begin{equation*}
\begin{aligned}
      \|\hat{x}_0 - \hat{x}_*\|^2_{(1,\dots,1)} &= \sum_{\ell=1}^p \left\|\frac{x_0}{p}-\frac{x_*}{p}\right \|^2 \\
      &= \frac{1}{p^2} \sum_{\ell=1}^p \|x_0-x_*\|^2 = \frac{1}{p} \|x_0-x_*\|_{1}^2 \\
      &= \frac{1}{p} \|s(x_0)-s(x_*)\|_{1}^2.
\end{aligned}
\end{equation*}
At any iteration \( i \) of~\hyperref[alg:CCD]{(CCD)}($\gamma_\ell$), where \( \ell = i \mod p + 1 \), the iterates \( \hat{x}_i \) satisfy the update rule $\hat{x}^{(\ell)}_i = \hat{x}^{(\ell)}_{i-1} - \gamma_\ell \nabla^{(\ell)} \hat{f}(\hat{x}_{i-1})$. This implies that $s(x_i) = s(x_{i-1}) - \gamma_\ell \nabla^{(\ell)} \hat{f}(\hat{x}_{i-1})$. By definition of \( \hat{f} \), we have \( \nabla^{(\ell)} \hat{f}(\hat{x}_{i-1}) = \nabla f(s(x_{i-1})) \), so that $s(x_i) = s(x_{i-1}) - \gamma_\ell \nabla f(s(x_{i-1}))$. This shows that \( \{s(x_i)\} \) corresponds to the iterates of (GD)($\gamma_{i \text{ mod } p + 1}$) on \( f \). Since \( f \) is the worst-case function for (GD)($\gamma_{i \text{ mod } p + 1}$), we have that  
\begin{equation*}
    \mathcal{W}_1^{GD(\gamma_{i \text{ mod }p})}(pK) = \frac{f(s(x_{pK})) - f(s(x_*))}{\|s(x_0)-s(x_*)\|_1^2}.
\end{equation*}
Since \( \hat{f}(\hat{x}_i) = f(s(x_i)) \) and \( \|\hat{x}_i - \hat{x}_*\|^2_{(1,\dots,1)} = \frac{1}{p} \|s(x_0)-s(x_*)\|_1^2 \), we deduce that the performance of~\hyperref[alg:CCD]{(CCD)}($\gamma_\ell$) after \( K \) cycles on \( \hat{f} \) is given by  
\begin{equation*}
    \frac{\hat{f}(\hat{x}_{pK}) - \hat{f}(\hat{x}_{*})}{\|\hat{x}_{0} - \hat{x}_{*}\|^2_{(1, \dots, 1)}} = p \frac{f(s_{pK}) - f(s(x_*))}{\|s(x_0) - s(x_*)\|_1^2} = p \mathcal{W}_1^{\text{GD}(\gamma_{i \bmod p})}(pK).
\end{equation*}
By the definition of the worst-case performance of~\hyperref[alg:CCD]{(CCD)}($\gamma_\ell$) over \( \mathcal{F}^{\text{coord}}_{0,(1,\dots,1)} \), we conclude that  
$\mathcal{W}^{CCD(\gamma_\ell)}_{(1,\dots,1)}(p,K) \geqslant p\mathcal{W}_1^{GD(\gamma_{i \text{ mod }p+1})}(pK)$. This completes the proof.
\qed \end{proof}

\begin{remark}
The previous theorem can be generalized to any deterministic strategy for selecting the block of coordinates to update, other than the cyclic order.  
\end{remark}

\begin{remark}
The upper bound on the convergence of~\hyperref[alg:CCD]{(CCD)} derived in \cite[Theorem 1]{shi2017coords} (also using PEP) violates the lower bound established in Theorem~\ref{thm:coord_lower_bound} and thus appears to be incorrect. 
\end{remark}
\subsection{Descent Lemmas for BCD Algorithms}\label{sec:desc_lemma_2}
\noindent  We now provide an improved descent lemma for the $2$-block~\hyperref[alg:CCD]{(CCD)} and derive from this lemma a bound on the residual gradient norm.  
\begin{lemma}[Descent lemma for 2-block (CCD)]\label{lm:coord_lemma_descent}
Given two successive iterates \( x_0 = (x^{(1)}_0, x^{(2)}_0)^T \) and \( x_1 = (x^{(1)}_{1}, x^{(2)}_{1})^T \) of~\hyperref[alg:CCD]{(CCD)}\(\left(\frac{1}{L_\ell}\right)\) on a function \( f \) that belongs to \( \mathcal{F}^{\text{coord}}_{0,\textbf{L}}(\mathbb{R}^d) \), where the update is performed on the block \( \ell \in \{1,2\} \):  
\begin{equation}
\label{eq: step_i}
    x_{1} = x_{0} - \frac{1}{L_\ell} U_{\ell} \nabla^{(\ell)} f(x_0),
\end{equation}
we have:
\begin{equation}\label{desc_lemma}
    f(x_0)-f(x_1) \geqslant \frac{1}{2L_\ell} \left (\|\nabla^{(\ell)} f(x_0)\|^2 + \| \nabla^{(\ell)} f(x_{1})\|^2 \right).
\end{equation}
\end{lemma}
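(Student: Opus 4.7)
The plan is to exploit the stronger lower-bound characterization established in Theorem~\ref{th:coord_conds}, namely condition (3), rather than the standard quadratic upper bound from Lemma~\ref{lm:coord_upper_bound}. A direct use of the latter with $h^{(\ell)} = -\frac{1}{L_\ell}\nabla^{(\ell)} f(x_0)$ would only yield the classical single-term descent inequality $f(x_0) - f(x_1) \geq \frac{1}{2L_\ell}\|\nabla^{(\ell)} f(x_0)\|^2$, which is why the new characterization is needed to recover the extra $\|\nabla^{(\ell)} f(x_1)\|^2$ contribution.

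Concretely, I would apply inequality~\eqref{eq:lower_coord} between the pair $(x_0, x_1)$ in the direction of block $\ell$, which gives
\begin{equation*}
    f(x_0) \geq f(x_1) + \langle \nabla f(x_1), x_0 - x_1 \rangle + \frac{1}{2L_\ell}\|\nabla^{(\ell)} f(x_0) - \nabla^{(\ell)} f(x_1)\|^2.
\end{equation*}
The key structural observation is that the CCD update~\eqref{eq: step_i} gives $x_0 - x_1 = \frac{1}{L_\ell} U_\ell \nabla^{(\ell)} f(x_0)$, which lives entirely in block $\ell$, so the inner product collapses to $\frac{1}{L_\ell}\langle \nabla^{(\ell)} f(x_1), \nabla^{(\ell)} f(x_0)\rangle$.

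Substituting this into the inequality and expanding the squared norm as $\|\nabla^{(\ell)} f(x_0)\|^2 + \|\nabla^{(\ell)} f(x_1)\|^2 - 2\langle \nabla^{(\ell)} f(x_0), \nabla^{(\ell)} f(x_1)\rangle$, the cross-term $\frac{1}{L_\ell}\langle \nabla^{(\ell)} f(x_0), \nabla^{(\ell)} f(x_1)\rangle$ coming from the linear part cancels exactly with the cross term coming from the square, leaving precisely the desired right-hand side $\frac{1}{2L_\ell}(\|\nabla^{(\ell)} f(x_0)\|^2 + \|\nabla^{(\ell)} f(x_1)\|^2)$.

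I do not expect any genuine obstacle here: once the proper characterization (equivalence 1 $\Leftrightarrow$ 3 in Theorem~\ref{th:coord_conds}) is in hand, the proof is a one-line cancellation. The only conceptual subtlety worth pointing out is that the argument actually does not use that there are exactly two blocks; it relies solely on the fact that a single CCD step moves along one block, so the inner product in the lower bound involves only that block's partial gradient. The restriction to $\ell \in \{1,2\}$ in the statement is presumably motivated by its subsequent use in bounding the residual gradient norm for the 2-block case.
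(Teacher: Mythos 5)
Your proof is correct and follows essentially the same route as the paper's: apply the block-$\ell$ lower bound (condition (3) of Theorem~\ref{th:coord_conds}) to the pair $(x_0,x_1)$, use the fact that the step moves only along block $\ell$ to collapse the inner product to $\frac{1}{L_\ell}\langle \nabla^{(\ell)} f(x_1), \nabla^{(\ell)} f(x_0)\rangle$, and cancel the cross terms after expanding the squared norm. Your side remark that the argument never actually uses $p=2$ is also accurate.
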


\begin{proof}
Since \( f \) belongs to \( \mathcal{F}^{\text{coord}}_{0,\textbf{L}}(\mathbb{R}^d) \), Theorem~\ref{th:coord_conds} gives:
\begin{equation*}
    f(x_0) - f(x_1) \geqslant \langle \nabla f(x_1), x_0 - x_1\rangle + \frac{1}{2L_\ell} \|\nabla^{(\ell)} f(x_1) - \nabla^{(\ell)} f(x_0)\|^2.
\end{equation*}
Since \( x_0 \) and \( x_1 \) differ only in the \( \ell \)-th block of coordinates, we have $\langle \nabla f(x_1), x_0 - x_1\rangle = \langle \nabla^{(\ell)} f(x_1), x^{(\ell)}_0 - x^{(\ell)}_1\rangle$, leading to:
\begin{equation}
\label{eq: th1}
    f(x_0) - f(x_1) \geqslant \langle \nabla^{(\ell)} f(x_1), x^{(\ell)}_0 - x^{(\ell)}_1\rangle + \frac{1}{2L_\ell} \|\nabla^{(\ell)} f(x_1) - \nabla^{(\ell)} f(x_0)\|^2.
\end{equation}  
Substituting~\eqref{eq: step_i} into~\eqref{eq: th1} yields
\begin{equation*}
f(x_0) - f(x_1) \geqslant \frac{1}{L_\ell}\langle \nabla^{(\ell)} f(x_1), \nabla^{(\ell)} f(x_0)\rangle + \frac{1}{2L_\ell} \|\nabla^{(\ell)} f(x_1) - \nabla^{(\ell)} f(x_0)\|^2.
\end{equation*}
Since
\begin{equation*}
\begin{aligned}
\frac{1}{2L_\ell} \|\nabla^{(\ell)} f(x_1) - \nabla^{(\ell)} f(x_0)\|^2 = \frac{1}{2L_\ell} \|\nabla^{(\ell)} f(x_1)\|^2 &\\- \frac{1}{L_\ell} \langle \nabla^{(\ell)} f(x_1), \nabla^{(\ell)} f(x_0)\rangle + \frac{1}{2L_\ell} \|\nabla^{(\ell)} f(x_0)\|^2 
\end{aligned}
\end{equation*}
we have
\begin{equation*}
\begin{aligned}
     \langle \nabla^{(\ell)} f(x_1), x^{(\ell)}_0 - x^{(\ell)}_1\rangle + \frac{1}{2L_\ell} \|\nabla^{(\ell)} f(x_1) - \nabla^{(\ell)} f(x_0)\|^2 = & \\ \frac{1}{2L_\ell}\left (\|\nabla^{(\ell)} f(x_0)\|^2 + \| \nabla^{(\ell)} f(x_{1})\|^2 \right).
\end{aligned}
\end{equation*}
Plunging this into~\eqref{eq: th1} gives us the desired result and concludes the proof.
\qed \end{proof}

\begin{remark}
Lemma~\ref{lm:coord_lemma_descent} is analogous to the double sufficient decrease lemma derived in \cite{teboulle2023elementary} for full gradient descent.
\end{remark}

\begin{theorem}
Given a vector of nonnegative constants $\textbf{L}$ and a function $f$ that belongs to \(\mathcal{F}^{\text{coord}}_{0,\textbf{L}}(\mathbb{R}^d) \), after \( K \) cycles of the $2$-block~\hyperref[alg:CCD]{(CCD)}\(\left(\frac{1}{L_\ell}\right)\) applied on $f$, we have:
\begin{equation}\label{eq:coord_bound_desc}
    \min_{i \in [1,2K-1]} \|\nabla f(x_i)\|_\textbf{L}^{*2} \leqslant \frac{2}{2K-1} \left(f(x_0) - f(x_{2K})\right).
\end{equation}
\end{theorem}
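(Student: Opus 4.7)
My plan is to sum the descent lemma (Lemma~\ref{lm:coord_lemma_descent}) telescopically over all $2K$ iterations and then exploit the alternating structure of 2-block (CCD) to rearrange the right-hand side as a sum of the full dual norms $\|\nabla f(x_j)\|_L^{*2}$ at intermediate iterates. The left-hand side telescopes to $f(x_0)-f(x_{2K})$, while each intermediate iterate $x_j$, with $j\in\{1,\dots,2K-1\}$, receives contributions from two consecutive applications of the descent lemma that are exactly complementary in the block they target.

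More precisely, denote by $\ell(i) \in \{1,2\}$ the block updated at step $i$, and set $g_i^{(\ell)} = \nabla^{(\ell)}f(x_i)$. Applying Lemma~\ref{lm:coord_lemma_descent} at iteration $i$ gives
\begin{equation*}
f(x_{i-1}) - f(x_i) \;\geq\; \frac{1}{2L_{\ell(i)}}\bigl(\|g_{i-1}^{(\ell(i))}\|^2 + \|g_i^{(\ell(i))}\|^2\bigr).
\end{equation*}
Summing over $i=1,\dots,2K$, the left-hand side telescopes to $f(x_0)-f(x_{2K})$. Reindexing the right-hand side by the iterate at which the gradient is evaluated, every intermediate iterate $x_j$ (with $1\leq j\leq 2K-1$) receives a term $\frac{1}{2L_{\ell(j)}}\|g_j^{(\ell(j))}\|^2$ from the descent inequality at step $j$ and a term $\frac{1}{2L_{\ell(j+1)}}\|g_j^{(\ell(j+1))}\|^2$ from the descent inequality at step $j+1$. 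The crucial observation, which is specific to the 2-block cyclic case, is that $\ell(j)$ and $\ell(j+1)$ are distinct and therefore cover both blocks $\{1,2\}$, so these two contributions combine into exactly $\tfrac{1}{2}\|\nabla f(x_j)\|_L^{*2}$ by the definition of the dual norm in~\eqref{eq:coords_norm}.

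The boundary iterates $x_0$ and $x_{2K}$ each receive a single non-negative contribution, which can simply be dropped. This yields
\begin{equation*}
f(x_0)-f(x_{2K}) \;\geq\; \sum_{j=1}^{2K-1} \tfrac{1}{2}\|\nabla f(x_j)\|_L^{*2} \;\geq\; \frac{2K-1}{2}\,\min_{i\in[1,2K-1]}\|\nabla f(x_i)\|_L^{*2},
\end{equation*}
and rearranging gives the desired inequality~\eqref{eq:coord_bound_desc}.

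The main (and essentially only) subtle point is the combinatorial bookkeeping of the bound~\eqref{desc_lemma} applied at two consecutive steps: one must carefully verify that the alternation $\ell(j)\neq\ell(j+1)$ inherent to 2-block cyclic updates is exactly what allows the two partial-block gradient norms at $x_j$ to assemble into the full dual norm $\|\nabla f(x_j)\|_L^{*2}$. This is where the restriction to $p=2$ enters crucially: for $p\geq 3$, two consecutive steps would no longer cover all blocks, so one would only retrieve a partial-gradient analogue of this bound.
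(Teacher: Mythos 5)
Your proof is correct and follows essentially the same route as the paper: sum the two-sided descent lemma~\eqref{desc_lemma} over all $2K$ steps, use the alternation of the two blocks so that the two partial-gradient contributions at each intermediate iterate assemble into $\tfrac12\|\nabla f(x_j)\|_{\textbf{L}}^{*2}$, discard the nonnegative boundary terms at $x_0$ and $x_{2K}$, and bound the resulting sum below by $\tfrac{2K-1}{2}$ times the minimum. The paper presents the $K=1$ case explicitly and then sums over all consecutive pairs, which is exactly your bookkeeping in a slightly different order.
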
 

\begin{proof}
Consider one cycle of $2$-block coordinate descent. Without loss of generality, we can order the partial gradient updates as $ x_1 = x_0 - \frac{1}{L_1} U_1\nabla^{(1)} f(x_0)$ and $ x_2 = x_1 - \frac{1}{L_2} U_2\nabla^{(2)} f(x_{1})$. By Lemma~\eqref{lm:coord_lemma_descent}, we have $f(x_0) - f(x_1) \geqslant \frac{1}{2L_1} (\|\nabla^{(1)} f(x_1)\|^2 + \|\nabla^{(1)} f(x_0)\|^2)$
and \( f(x_1) - f(x_2) \geqslant \frac{1}{2L_2} ( \allowbreak \|\nabla^{(2)} f(x_1)\|^2 + \allowbreak \|\nabla^{(2)} f(x_2)\|^2 )\). Summing the two inequalities gives:
\begin{align*}
       f(x_0) - f(x_2) & \geqslant \frac{1}{2L_1}\|\nabla^{(1)} f(x_0)\|^2  + \frac{1}{2} \|\nabla f(x_1)\|_{\textbf{L}}^{*2} +  \frac{1}{2L_2}\|\nabla^{(2)} f(x_2)\|^2\\
       &\geqslant  \frac{1}{2} \|\nabla f(x_1)\|_\textbf{L}^{*2}, 
\end{align*}
which corresponds exactly to \eqref{eq:coord_bound_desc} for one cycle (\( K = 1 \)). The proof easily generalizes to \( K \) cycles. By writing all the valid inequalities~\eqref{desc_lemma} between consecutive iterates and summing them, we obtain:
\begin{equation*}
\begin{aligned}
f(x_0) - f(x_{2K}) &\geqslant \frac{1}{2L_1} \|\nabla^{(1)} f(x_0)\|^2 + \frac{1}{2} \sum_{i = 1}^{2K - 1} \|\nabla f(x_i)\|_{\textbf{L}}^{*2} \\
&+  \frac{1}{2L_2} \|\nabla^{(2)} f(x_{2K})\|^2.
\end{aligned}
\end{equation*}
Since:
\begin{align*}
\frac{1}{2L_1} \|\nabla^{(1)} f(x_0)\|^2 + \frac{1}{2} \sum_{i = 1}^{2K - 1} \|\nabla f(x_i)\|_{\textbf{L}}^{*2} +  \frac{1}{2L_2} \|\nabla^{(2)} f(x_{2K})\|^2  & \geqslant \\ \frac{2K-1}{2} \min_{i \in [1,2K-1]} \|\nabla f(x_i)\|_\textbf{L}^{*2},
\end{align*}
we obtain the desired result.
\qed \end{proof}
\noindent Note that the previous theorem is not trivial to generalize in an analytic form for a number of blocks \( p > 2 \); however, in Section~\ref{sec:num_desc}, we show how Performance Estimation Problems can be used to numerically generate optimized descent lemmas and then derive from these semi-analytical upper bounds on the convergence of~\hyperref[alg:CCD]{(CCD)}.

\section{Numerical Results on the Worst-case Convergence of BCD Algorithms}\label{sec:num_exp}
\subsection{Convex reformulation for PEPs and Initial Assumptions}\label{sec:cvx_PEP}
We now exploit the PEP framework developed previously to provide new improved numerical bounds on the worst-case convergence rate of deterministic block coordinate descent algorithms and give new insights on the worst-case behaviour of these algorithms. \noindent Under its current form, Problem~\eqref{R-PEP-coord} remains nonconvex (interpolation conditions~\eqref{eq:coord_interp_conds} involve nonconvex quadratic terms). However, for any fixed-step first-order BCD algorithm $\mathcal{M}^{\text{coord}}_\alpha$ such as for instance cyclic coordinate descent, Problem~\eqref{R-PEP-coord} can be reformulated as a convex SDP. Indeed, for any deterministic fixed-step first-order BCD algorithm, as defined by equation~\eqref{eq:coord_algo_fixed}, the iterates $\{x_i\}$ can be expressed as linear functions of the initial point $x_0$ and past partial gradients $g^{(\ell)}_i$. By defining the matrices $P^{(\ell)} = [g^{(\ell)}_0,\dots,g^{(\ell)}_N,x_0^{(\ell)}],\; \forall \ell \in \{1,\dots,p\}$, any constraint involving the quadratic expressions $\langle x^{(\ell)}_i, x^{(\ell)}_j\rangle, \; \langle g^{(\ell)}_i, x^{(\ell)}_j\rangle, \; \langle g^{(\ell)}_i, g^{(\ell)}_j\rangle, \allowbreak \; \forall i,j \in I, \; \forall \ell \in \{1,\dots,p\}$ can be rewritten as a linear constraint on the Gram matrices $G^{(\ell)} = P^{(\ell)\top} P^{(\ell)}$. Standard performance criteria such as $f_N -f_*$ and initial conditions such as $\|x_0-x_*\| \leq R$ can also be expressed as linear expression of the vector of function values $F = [f_1,\dots,f_N]$ and the Gram matrices $G^{(\ell)}$. For a detailed explanation of the Gram matrix lifting technique used to reformulate a PEP as an SDP, we refer the reader to~\cite{taylor2017smooth}, where it is presented in the context of full gradient descent over the class of smooth (strongly) convex functions. The main difference with the case of  full gradient descent over the class of smooth (strongly) convex functions is that the analysis of $K$ cycles of a $p$-block coordinate wise algorithm requires solving an SDP involving $p$ matrices instead of only one of size $(N+2) \times (N+2)$ with $N = pK$, the number of partial gradient steps which can prove to be computationally expensive specially for large numbers of cycles $K$ or blocks of coordinates $p$.

\noindent In the remainder of this paper, we choose for the analysis of our BCD algorithms the functional accuracy \( f(x_N) - f(x_*) \) as the performance criterion \( \mathcal{P} \). Regarding the initial condition, a common assumption in the analysis of cyclic block coordinate-wise descent algorithms, as adopted in~\cite{beck2013CCD,nesterov2012Coords}, is that the level set $S = \{x \in \mathbb{R}^d : f(x) \leq f(x_0)\}$ is compact, which implies the following bounds (assuming that that the algorithm decreases the objective monotonically which is the case for instance for~\hyperref[alg:CCD]{(CCD)} ):

\begin{equation}\label{eq:coord_init_beck}
    \|x_{pk} - x_*\| \leq R(x_0), \quad \forall k \in \{1,\dots,K\},
\end{equation}
where \( R(x_0) \) is defined as:
\begin{equation}
    R(x_0) = \max_{x_* \in X_*} \max_{x \in \mathbb{R}^d} \{\|x-x_*\| : f(x) \leq f(x_0)\},
\end{equation}
\noindent with $X_*$ is the set of optimal points of the function. As explained in Theorems~\ref{th:coord_beck_ccd} and~\ref{th:coord_am_bound}, the upper bounds in~\cite{beck2013CCD} remain valid considering the optimal point $x_*$ such that $\forall k \in 1\dots K, \;  \|x_{pk}-x_*\| \leqslant \min_{x_* \in X_*} \max_{k \in 1,\dots,K} \|x_{pk}-x_{*}\|$, This allows us to define a Setting ALL such that Theorems~\ref{th:coord_beck_ccd} and~\ref{th:coord_am_bound} remain applicable and their bounds remain valid for comparison.

\medskip

\noindent \textbf{Setting ALL.} Given the parameter $R_a>0$, we assume that the iterates satisfy:
\begin{equation}\label{eq:coord_setting_all}
    \min_{x_* \in X_*} \max_{k \in \{1,\dots,K\}} \|x_{pk} - x_*\| \leq R_a.
\end{equation}
Note that the condition~\eqref{eq:coord_setting_all} in Setting ALL can be used as an initial condition in our PEP framework, as it is equivalent to imposing $\|x_{pk} - x_*\| \leq R_a, \quad \forall k \in \{1,\dots,K\}$, which can be expressed as linear constraints on the Gram matrices $G^{(\ell)}$.

\medskip

\noindent Though theoretically convenient, Setting ALL is not very natural and may be difficult to verify in practice. In addition, Setting ALL can prove to be unusable for certain class of functions. Indeed, consider the family of smooth functions $f_{\epsilon}(x,y) = (x - y)^2 + \epsilon (x^2 + y^2)$ and the initial point $(x_0 = 1, y_0 = -1)$ for cyclic coordinate descent. We will show for this class of functions that $ R_a = \min_{x_* \in X_*} \max_{k \in \{1,\dots,K\}} \|x_{pk} - x_*\|$ can become very large and therefore any bound  proportional to $R_a^2$ (which is the case for the bounds derived under the assumption of Setting ALL) also become very large. Computing the gradients along each of the two blocks of coordinates gives
\begin{equation*}
\begin{aligned}
    &\forall x, y \in \mathbb{R}^d, \; \nabla^{(x)} f(x,y) = 2(1+\epsilon) x - 2y, \\
    &\forall x, y \in \mathbb{R}^d, \; \nabla^{(y)} f(x,y) = 2(1+\epsilon) y - 2x. \\
\end{aligned}
\end{equation*}

\noindent This implies that $f_\epsilon$ is $2(1+\epsilon)$-smooth along each coordinate. Applying the 2-block cyclic coordinate descent algorithm with the step size $\frac{1}{2(1+\epsilon)}$, starting along $x$, we obtain that after one cycle of the algorithm $x_{p(k+1)} = \frac{y_{pk}}{(1+\epsilon)}$ and $y_{p(k+1)} = \frac{y_{pk}}{(1+\epsilon)^2}$. We have that $R_a = \min_{x_* \in X_*} \max_{k \in \{1,\dots,K\}} \|x_{pk} - x_*\| = \|(x_2, y_2)\| = \frac{1}{(1+\epsilon)^2} \sqrt{1 + \frac{1}{(1+\epsilon)^2}}$, which tends to infinity as $\epsilon$ tends to zero. For $\epsilon$ small enough the bounds obtained in this setting are thus very conservative and do not give useful information about the performance of the algorithm. Therefore, we will also consider a more classical setting, albeit less frequently used in the context of cyclic block coordinate-wise algorithms:

\medskip

\noindent \textbf{Setting INIT.} Given $R_i>0$, the starting point of the block coordinate algorithm $x_{0}$ and an optimal point of the function $x_*$, we have that
\begin{equation}\label{eq:setting_init}
     \|x_{0}-x_*\|_\textbf{L} \leqslant R_i.
\end{equation}
The condition~\eqref{eq:setting_init} in Setting INIT is also compatible with our PEP framework for BCD algorithms as it can be expressed as a linear constraint on the Gram matrices $G^{(\ell)}$.

\subsection{New worst-case convergence rate upper bounds for~\hyperref[alg:CCD]{(CCD)} in Setting ALL and INIT }\label{sec:num_bound_ccd}

First, we consider the algorithm~\hyperref[alg:CCD]{(CCD)}$\left(\gamma_\ell\right)$ with step sizes $\gamma_\ell = \frac{1}{L_\ell}$ for all $\ell \in \{1,\dots,p\}$. We provide bounds for cases with $2$ and $3$ blocks and a range of Lipschitz constants. Although we present results only for $2$- and $3$-block algorithms, our PEP framework can accommodate an arbitrary number of blocks of coordinates. As a performance criterion, we measure the difference between the objective value at the final iterate and the optimal value, $f(x_{pK}) - f(x_*)$. Since a sublinear convergence rate of $\mathcal{O}(1/K)$ is expected, we present our results in the form of the bound multiplied by the number of cycles, to facilitate analysis. Figures~\ref{fig:CCD_all_blocks} show that the obtained upper bounds outperform by one order of magnitude the best-known analytical bound given in Theorem~\ref{th:coord_beck_ccd}. Additionally, these figures illustrate that our upper bounds improve upon those obtained using the PEP framework from~\cite{hadi}, which uses the inequalities of $2$ in Theorem~\ref{th:coord_conds}---namely, the classical subgradient inequality to characterize convexity and the upper bound characterization of coordinate-wise smoothness---as interpolation conditions, demonstrating that our framework provides a tighter analysis due to a better characterization (interpolation conditions) of the functional class $\mathcal{F}^{\text{coord}}_{0,\textbf{L}}(\mathbb{R}^d)$.
 We also compare our upper bounds to the lower bounds obtained using the PEP framework developed in~\cite{kamri} for BCD algorithms over the class of smooth convex functions which tends to indicate that our bounds are almost tight.

 \medskip

 \noindent In Figure~\ref{fig:CCD_init}, we provide numerical upper bounds for the $2$-block~\hyperref[alg:CCD]{(CCD)}$\left( \frac{1}{L_\ell}\right)$ and $3$-block~\hyperref[alg:CCD]{(CCD)}$\left( \frac{1}{L_\ell}\right)$ under the milder assumptions of Setting INIT. By selecting step-sizes $\gamma_{\ell} = \frac{1}{L_{\ell}}$ for $\ell \in \{1,\dots,p\}$, we can assume without loss of generality that $\textbf{L} = (1,\dots,1)$ and $R_{i} = 1$ due to the scaling law stated in Theorem~\ref{th:coord_L_invar}. We plot lower and upper bounds on the function accuracy after $K$ cycles, $f(x_{pK}) - f(x_*)$, times the number of cycles $K$. We compare our upper bound to the one obtained using the PEP framework developed in~\cite{hadi}, as well as to the lower bound derived in Theorem~\ref{thm:coord_lower_bound}. As expected, the upper bound on the worst case obtained using our PEP framework outperforms the upper bound obtained using the PEP framework from~\cite{hadi}, as our interpolation conditions are tighter. We also provide a linear fit of the reciprocal of our bounds, suggesting that the convergence rate of~\hyperref[alg:CCD]{(CCD)} under the assumptions of Setting INIT is $\mathcal{O}(1/K)$. We conclude that the stronger assumptions of Setting ALL do not appear to yield faster convergence compared to the milder assumptions of Setting INIT. To the best of our knowledge, no analytical convergence bound exists for deterministic BCD algorithms under the milder assumptions of Setting INIT. We thus provide the first numerical evidence suggesting convergence of~\hyperref[alg:CCD]{(CCD)} in this setting, and with order $\mathcal{O}(1/K)$.

\begin{figure}[H]
    \centering
    \begin{subfigure}[b]{0.4\textwidth}
        \centering
        \includegraphics[width=\textwidth]{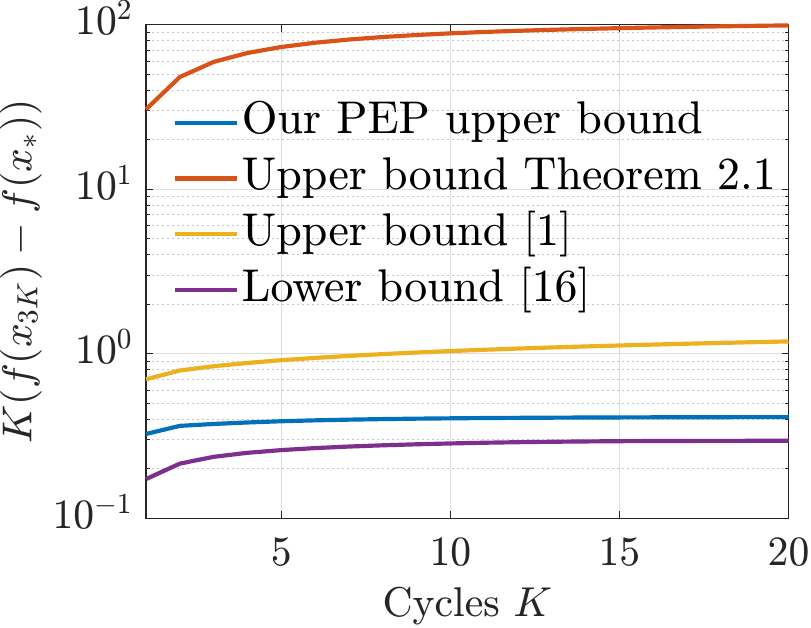}
        \caption{$\mathbf{L} = (1,1,1)$}
        \label{fig:CCD311}
    \end{subfigure}
    \hspace{0.001\textwidth}
    \begin{subfigure}[b]{0.4\textwidth}
        \centering
        \includegraphics[width=\textwidth]{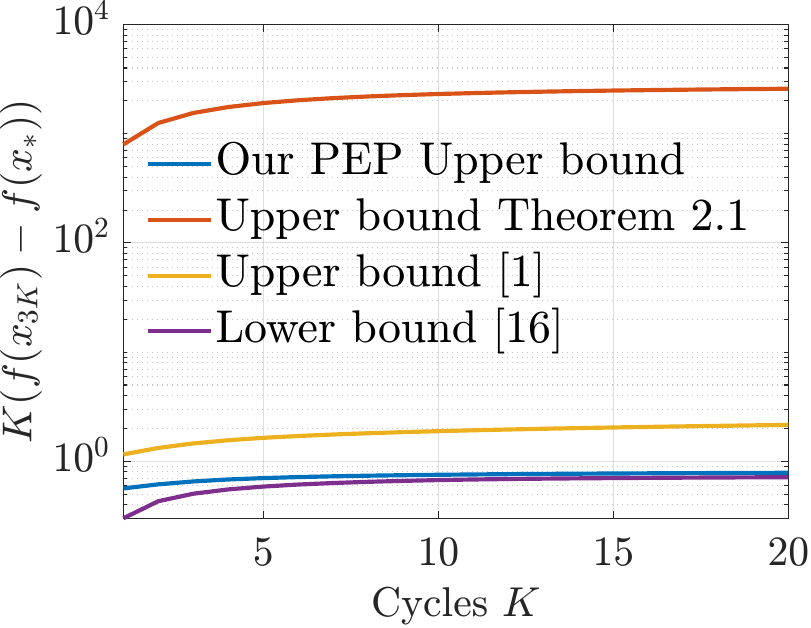}
        \caption{$\mathbf{L} = (1,1,3)$}
        \label{fig:CCD3113}
    \end{subfigure}
    \hspace{0.001\textwidth}
    \begin{subfigure}[b]{0.4\textwidth}
        \centering
        \includegraphics[width=\textwidth]{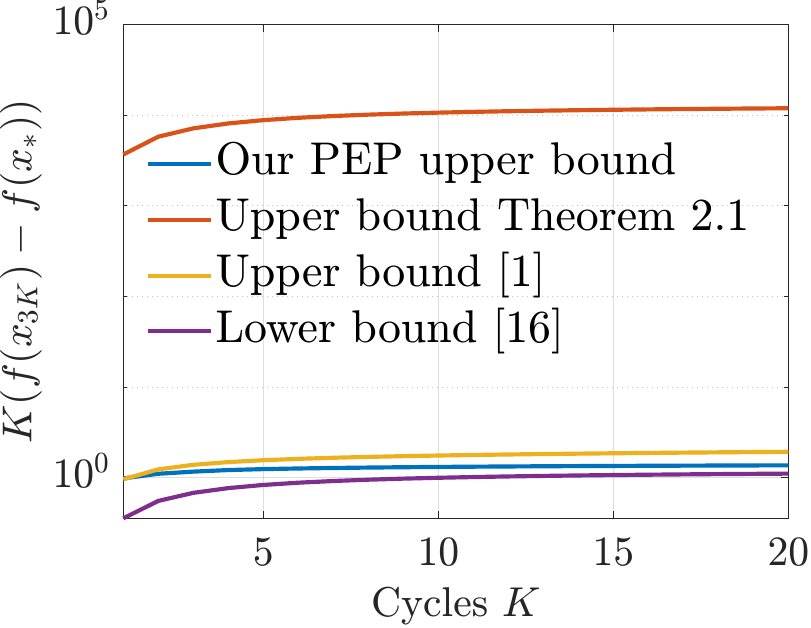}
        \caption{$\mathbf{L} = (1,3,5)$}
        \label{fig:CCD3135}
    \end{subfigure}


    \begin{subfigure}[b]{0.4\textwidth}
        \centering
        \includegraphics[width=\textwidth]{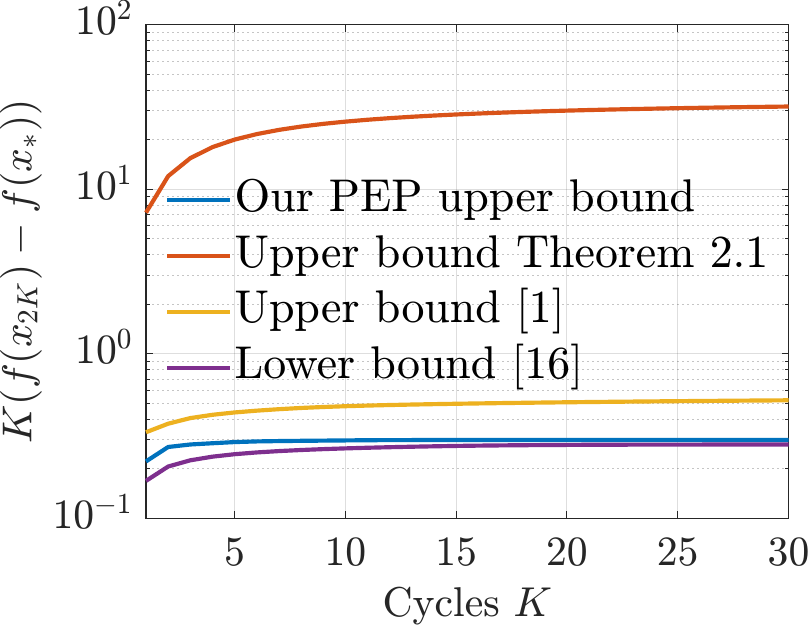}
        \caption{$\mathbf{L} = (1,1)$}
        \label{fig:CCD211}
    \end{subfigure}
    \hspace{0.001\textwidth}
    \begin{subfigure}[b]{0.4\textwidth}
        \centering
        \includegraphics[width=\textwidth]{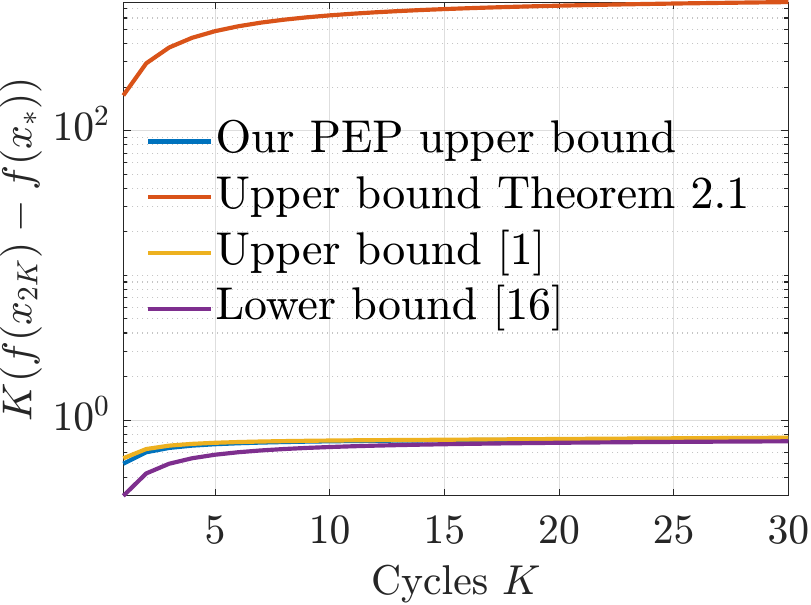}
        \caption{$\mathbf{L} = (1,3)$}
        \label{fig:CCD213}
    \end{subfigure}
    \hspace{0.001\textwidth}
    \begin{subfigure}[b]{0.4\textwidth}
        \centering
        \includegraphics[width=\textwidth]{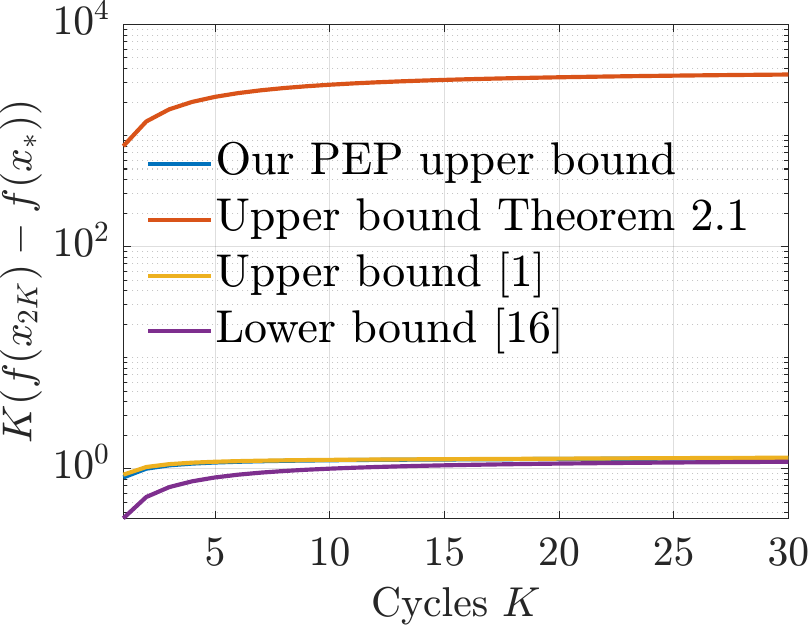}
        \caption{$\mathbf{L} = (1,5)$}
        \label{fig:CCD215}
    \end{subfigure}

    \caption{\small Comparison of upper bounds on the worst-case performance of 2-block and 3-block~\hyperref[alg:CCD]{(CCD)} methods in the Setting ALL.}
    \label{fig:CCD_all_blocks}
\end{figure}

\begin{figure}[H]
    \centering

    \begin{subfigure}[b]{0.45\textwidth}
        \centering
        \includegraphics[width=\textwidth]{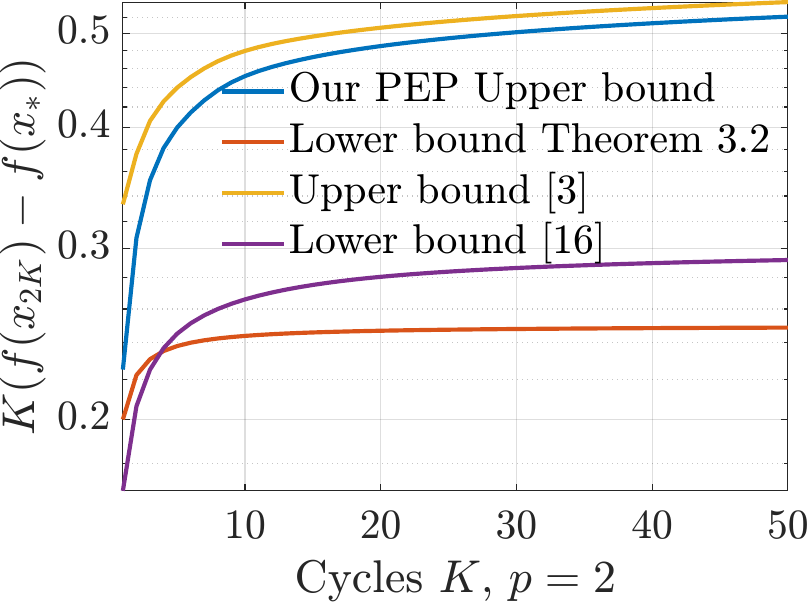}
        \caption{$2$-block~\hyperref[alg:CCD]{(CCD)} bounds}
        \label{fig:CCD2_init}
    \end{subfigure}
    \hspace{0.001\textwidth}
    \begin{subfigure}[b]{0.45\textwidth}
        \centering
        \includegraphics[width=\textwidth]{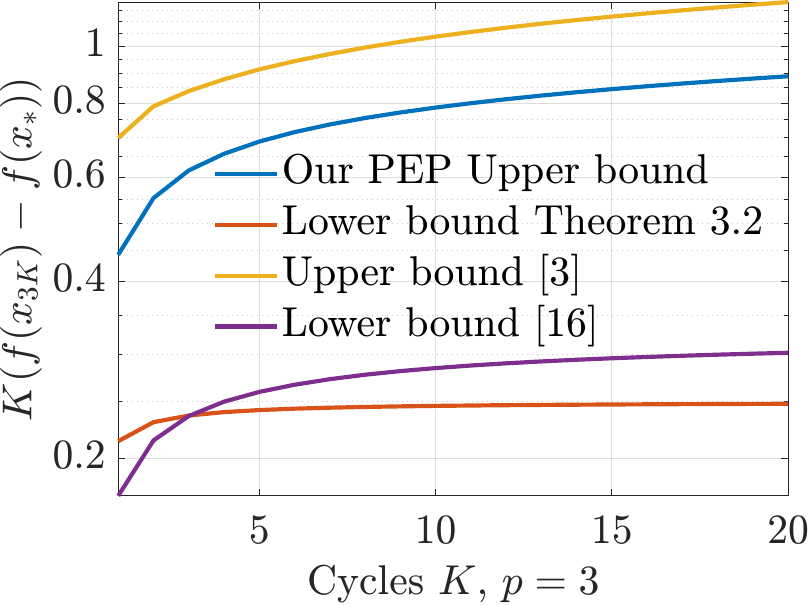}
        \caption{$3$-block~\hyperref[alg:CCD]{(CCD)} bounds}
        \label{fig:CCD3_init}
    \end{subfigure}

    \vspace{0.8em}

    \begin{subfigure}[b]{0.45\textwidth}
        \centering
        \includegraphics[width=\textwidth]{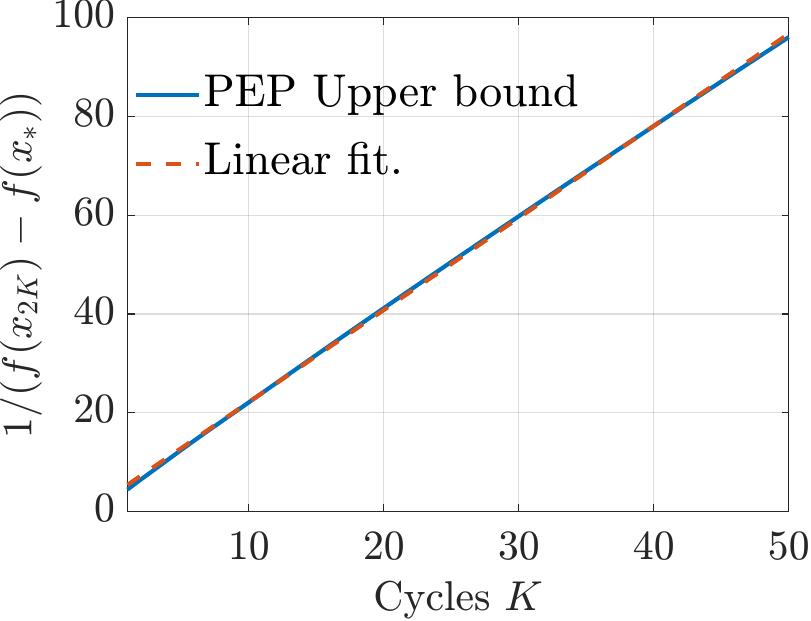}
        \caption{$2$-block~\hyperref[alg:CCD]{(CCD)} numerical fit}
        \label{fig:CCD2_init_fit}
    \end{subfigure}
    \hspace{0.001\textwidth}
    \begin{subfigure}[b]{0.45\textwidth}
        \centering
        \includegraphics[width=\textwidth]{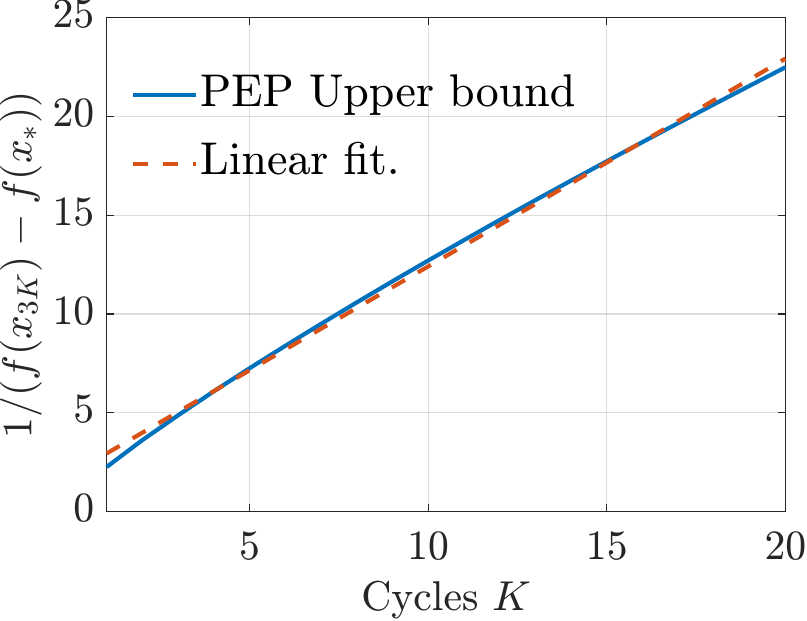}
        \caption{$3$-block~\hyperref[alg:CCD]{(CCD)} numerical fit}
        \label{fig:CCD3_init_fit}
    \end{subfigure}

    \caption{\small Worst-case performance bounds and numerical linear fits of our PEP upper bounds for 2-block and 3-block~\hyperref[alg:CCD]{(CCD)} methods in the Setting INIT.}
    \label{fig:CCD_init}
\end{figure}

\subsection{Upper bound on the convergence rate of~\hyperref[alg:CCD]{(CCD)} derived with PEP-based descent lemmas}\label{sec:num_desc}
\noindent Finding analytical expressions for the previous numerical upper bounds on the convergence of~\hyperref[alg:CCD]{(CCD)} is challenging. Therefore, we generalize the results of Section~\ref{sec:desc_lemma_2} using PEP to generate numerically optimized descent lemmas and derive semi-analytical bounds on the functional accuracy.
\begin{theorem}\label{th:coord_gen_descent}
    Given a number of blocks $p$ and a vector of nonnegative constants $\textbf{L}$, let $\{x_i\}_{i = \{1,\dots,N\}}$, $N = pK$, be the sequence of iterates generated by the $p$-block~\hyperref
    [alg:CCD]{(CCD)}$(\frac{1}{L_\ell})$ over a function $f$ that belongs to $\mathcal{F}^{\text{coord}}_{0,\textbf{L}}(\mathbb{R}^d)$. We suppose that there exists a nonnegative constant $C$ such that for all $k \in \{1,\dots,K-1\}$:
\begin{equation}\label{descent lemma}
  f(x_{kp}) -  f(x_{(k+1)p}) \geqslant C \|\nabla f(x_{kp})\|^2,
\end{equation}
then for all $k \in \{1,\dots,K\}$
\begin{equation}\label{eq:coord_bound_desc_lemma}
    f(x_{kp})-f(x_*) \leqslant \frac{1}{C}  \frac{R^2}{k+m},
\end{equation}
with $R = \min_{x_* \in X_*} \max_{k \in \{1,\dots,K\}} \{\|x-x_*\| : f(x) \leqslant f(x_0)\}$  and $m = \frac{2}{pL_{\text{max}}C}$ where $L_{\text{max}} = \max_{\ell \in \{1,\dots,p\}} L_{\ell}$.
\end{theorem}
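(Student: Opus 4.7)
The plan of attack is the classical template for converting a per-step descent inequality plus convexity into a sublinear convergence rate, here adapted to the per-cycle behaviour of \hyperref[alg:CCD]{(CCD)}. I would set $\delta_k := f(x_{kp}) - f(x_*)$. The inequality \eqref{descent lemma} already ensures that $f$ decreases monotonically along the subsequence $\{x_{kp}\}$, so $f(x_{kp}) \leq f(x_0)$ for all $k$, which in turn yields $\|x_{kp} - x_*\| \leq R$ for every $k$ (up to choosing a suitable $x_* \in X_*$ in the definition of $R$).

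The core chain then has three classical steps. First, by convexity and Cauchy--Schwarz,
\begin{equation*}
\delta_k \;\leq\; \langle \nabla f(x_{kp}), x_{kp} - x_* \rangle \;\leq\; R \, \|\nabla f(x_{kp})\|.
\end{equation*}
Plugging this into the hypothesis \eqref{descent lemma} turns the descent inequality into the quadratic-in-$\delta_k$ recurrence $\delta_k - \delta_{k+1} \geq (C/R^2)\delta_k^2$. Dividing by $\delta_k \delta_{k+1} > 0$ and using the monotonicity $\delta_{k+1} \leq \delta_k$ yields the one-step reciprocal bound $\tfrac{1}{\delta_{k+1}} - \tfrac{1}{\delta_k} \geq \tfrac{C}{R^2}$, which I would then telescope to obtain $\tfrac{1}{\delta_k} \geq \tfrac{1}{\delta_0} + \tfrac{kC}{R^2}$.

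The main obstacle is to bound the initial gap $\delta_0$ tightly enough so that $\tfrac{1}{\delta_0} \geq \tfrac{Cm}{R^2}$ with $m = \tfrac{2}{p L_{\max} C}$, equivalently $\delta_0 \leq \tfrac{p L_{\max}}{2} R^2$. My plan here is to invoke Lemma~\ref{lm:coord_lower_bound} at $(x_1, x_2) = (x_0, x_*)$ once for each block $\ell$, exploit $\nabla^{(\ell)} f(x_*) = 0$, and sum the resulting $p$ inequalities to get
\begin{equation*}
p\,\delta_0 \;\leq\; p\,\langle \nabla f(x_0), x_0 - x_* \rangle \;-\; \sum_{\ell=1}^{p} \tfrac{1}{2L_\ell}\|\nabla^{(\ell)} f(x_0)\|^2.
\end{equation*}
A block-wise Young inequality on each inner product with parameter $p L_\ell$ cancels the residual partial gradient terms exactly and leaves $\delta_0 \leq \tfrac{p L_{\max}}{2}\|x_0 - x_*\|^2 \leq \tfrac{p L_{\max}}{2} R^2$. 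Taking reciprocals and combining with the telescoped inequality produces $\tfrac{1}{\delta_k} \geq \tfrac{C(k+m)}{R^2}$, which is exactly $\delta_k \leq \tfrac{R^2}{C(k+m)}$. A minor bookkeeping remark: the statement restricts \eqref{descent lemma} to $k \geq 1$; if this is enforced strictly, I would begin the telescoping at $k=1$ and absorb the first cycle separately by chaining the coordinate-wise upper bounds of Lemma~\ref{lm:coord_upper_bound} along one full cycle to recover the same constant $m$. This is a minor index shift that does not affect the structure of the argument; the genuine technical content lies in the sharp bound on $\delta_0$ together with the matching choice of Young's parameters.
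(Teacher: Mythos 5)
Your argument is correct and is, in substance, the paper's own proof written out in full: the paper merely cites \cite[Corollary 3.7]{beck2013CCD} and observes that Beck--Tetruashvili's descent lemma can be swapped for the hypothesis~\eqref{descent lemma}, and their underlying argument is exactly the convexity/Cauchy--Schwarz/reciprocal-telescoping chain you reconstruct, including the initial-gap bound $f(x_0)-f_*\le \tfrac{pL_{\max}}{2}R^2$ that produces $m=\tfrac{2}{pL_{\max}C}$. One caveat on your bookkeeping remark: patching the first cycle via the coordinate-wise upper bounds only gives $f(x_p)-f_*\le f(x_0)-f_*\le \tfrac{R^2}{Cm}$, which falls short of the $\tfrac{R^2}{C(1+m)}$ needed to start the telescope at $k=1$; the clean fix is to note that the descent inequality~\eqref{descent lemma} (being certified by a PEP that is independent of the cycle index) also holds at $k=0$, so the telescoping can begin from $\delta_0$ as in your main chain.
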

\begin{proof}
As explained previously (when presenting Setting ALL) this is adapted from \cite[Corollary 3.7]{beck2013CCD}. Their proof uses the inequalities:
\begin{equation*}
   \forall k \in 1,\dots, K, \; \|x_{pk}-x_{*}\| \leqslant \max_{x_* \in X_*} \max_{x \in \mathbb{R}^{d}} \{\|x-x_*\|:\; f(x) \leqslant f(x_0\}.
\end{equation*}
But remains valid, considering the optimal point $x_*$ such that
\begin{equation*}
  \forall k \in 1,\dots, K, \;  \|x_{pk}-x_*\| \leqslant \min_{x_* \in X_*} \max_{k \in 1,\dots,K} \|x_{pk}-x_{*}\|,
\end{equation*}
and using these inequalities instead. Furthermore, the proof in \cite[Corollary 3.7]{beck2013CCD} is based on the following descent lemma
\begin{equation}\label{beck lemma}
    f(x_{kp}) -  f(x_{(k+1)p})  \geqslant \frac{1}{4L_{\max}(1+p^3\frac{L_{\max}^2}{L^2_{\min}})} \|\nabla f(x_{kp})\|^2 ,
\end{equation}
 which can be replaced by the descent lemma~\eqref{descent lemma} giving the desired upper bound~\eqref{eq:coord_bound_desc_lemma}.
\qed \end{proof}
\noindent The main challenge here is to find a constant \( C \) such that the descent property in~\eqref{descent lemma} is satisfied. Unlike the simpler case of only two blocks of coordinates for which a valid constant $C$ is given in~\eqref{desc_lemma}, it is difficult to analytically determine a better constant than the one used in descent lemma~\eqref{beck lemma}. However, we can numerically obtain an optimized constant by solving the following Performance Estimation Problem:
\begin{equation}\tag{PEP-coord-descent}\label{PEP-coord-descent}
\begin{aligned}
    C_{\text{opt}} = &\min_{\mathcal{S}_p=\{(x_i,g_i,f_i)\}_{i \in \{1,\dots p\}}} f(x_0) - f(x_p),\\
     & \text{such that } \mathcal{S}_p \text{ is } \mathcal{F}^{\text{coord}}_{0,\mathbf{L}}(\mathbb{R}^d)\text{-interpolable}, \\
     & x_1, \ldots, x_p \text{ are generated from } x_0 \text{ by}~\hyperref[alg:CCD]{(CCD)}, \\
    & \|\nabla f(x_0)\|^2 = 1.
\end{aligned}
\end{equation}
By definition, it is straightforward to see that \( C_{\text{opt}} \) is the optimal constant for which~\eqref{descent lemma} remains valid. Since~\hyperref[alg:CCD]{(CCD)} is a fixed-step first-order BCD algorithm, Problem~\eqref{PEP-coord-descent} can also be relaxed into a convex SDP program, following a similar construction to the one presented for Problem~\ref{f-PEP-coord}. This provides a computationally efficient way to compute optimized constants ensuring the validity of~\eqref{descent lemma}, which, by Theorem~\ref{th:coord_gen_descent}, leads to a semi-analytical upper bound on the convergence of~\hyperref[alg:CCD]{(CCD)}.
\begin{remark}
Note that the approach for automatically designing a descent lemma for~\hyperref[alg:CCD]{(CCD)} can be directly adapted to any deterministic fixed-step first-order BCD method for which a convex PEP formulation exists. Furthermore, computing optimized descent lemmas is computationally more efficient, since establishing a descent lemma with PEP only requires solving an SDP involving \( p \) matrices of size \( (p+2) \times (p+2) \), where \( p \) is the number of blocks. 
\end{remark}
We present numerical results using this descent lemma-based approach in Figure~\ref{fig:desc_CCD} for the $2$-block~\hyperref[alg:CCD]{(CCD)}$\left(\frac{1}{L_\ell}\right)$ algorithm under the assumptions of Setting ALL with $\textbf{L} = (1,1)$ and $R_a = 1$.
\begin{figure}[H]
    \centering
        \includegraphics[width=0.5\textwidth]{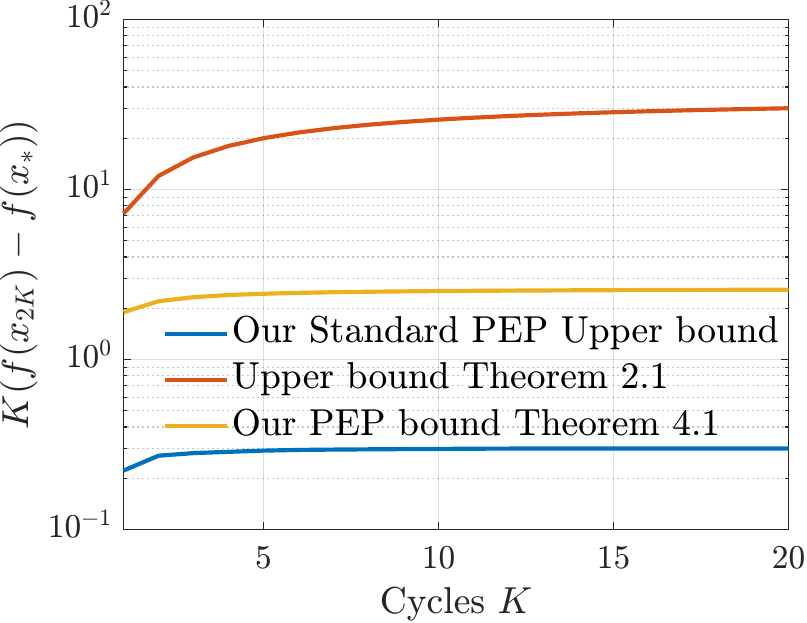}
    \caption{\small Comparison between the standard PEP bound (blue), the upper bound using PEP to establish a descent lemma (yellow) and the best known analytical bound (red) on the convergence of $2$-block~\hyperref[alg:CCD]{(CCD)}  with $\textbf{L} = (1,1)$.}
    \label{fig:desc_CCD}
\end{figure}
\noindent The optimal value of Problem~\eqref{PEP-coord-descent} in this case is $C_{\text{opt}} = 0.38$. Figure~\ref{fig:desc_CCD} shows that the upper bound, obtained using our PEP-based descent lemma, outperforms the bound given in Theorem~\ref{th:coord_beck_ccd}, originally derived in~\cite{beck2013CCD} using the suboptimal descent Lemma~\ref{beck lemma}. Compared to the standard PEP which guarantees the optimality of the upper bound with respect to the set of interpolation constraints used, the descent lemma approach yields more conservative upper bounds on the worst-case convergence of~\hyperref[alg:CCD]{(CCD)}. However, as illustrated here, the approach based on PEP-generated descent lemma provides a suitable compromise between the tightness and simplicity of the resulting upper bound.

\subsection{Linear dependency of the convergence rate for~\hyperref[alg:CCD]{(CCD)} with respect to the number of blocks $p$ }\label{sec:blocks}
In Figure~\ref{fig:p_dep}, we plot the upper bound on the function accuracy, $f(x_{pK}) - f(x_*)$ obtained using PEP, with respect to the number of blocks $p$ for $1$, $2$, and $3$ cycles. We provide a linear fit for these plots. The numerical results of Figures~~\ref{fig:p_dep}  suggest that our upper bound grows linearly with the number of blocks $p$. This result represents an improvement over the analytical bound in Theorem~\ref{th:coord_beck_ccd}, which exhibits a cubic dependence on $p$ under the stronger assumptions of Setting ALL. Given the lower bound on the convergence of~\hyperref[alg:CCD]{(CCD)}$\left(\frac{1}{L_\ell}\right)$ derived in Theorem~\ref{thm:coord_lower_bound}, these numerical results suggest that, under Setting INIT, a linear dependence of the worst-case convergence rate of~\hyperref[alg:CCD]{(CCD)}$\left(\frac{1}{L_\ell}\right)$  on the number of blocks $p$ is optimal in the sense that any valid upper bound on the worst-case of~\hyperref[alg:CCD]{(CCD)} has to at least grow linearly with the number of blocks $p$.
\vspace{-5mm}
\begin{figure}[H]
    \centering

    \begin{subfigure}[b]{0.5\textwidth}
        \centering
        \includegraphics[width=\textwidth]{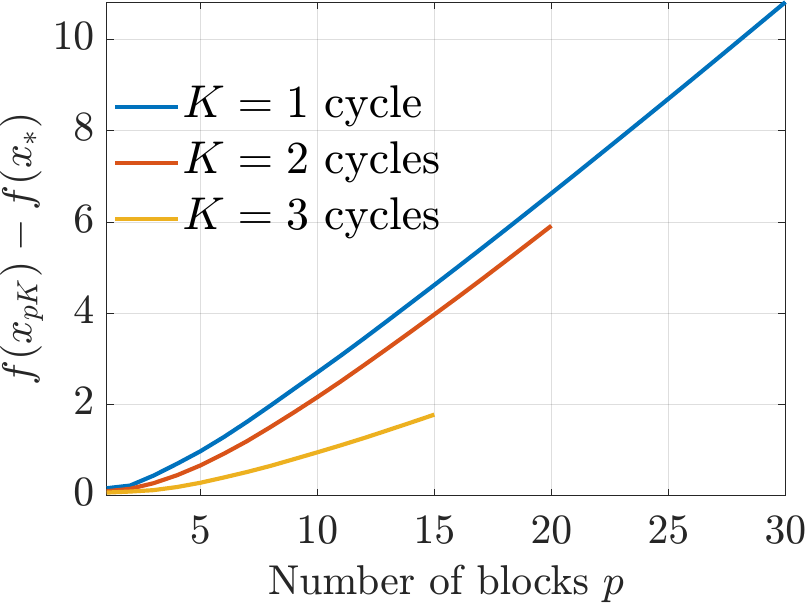}
        \caption{Evolution of our PEP upper bound with respect to the number of blocks $p$ in Setting INIT.}
        \label{fig:p_dep}
    \end{subfigure}

    \vspace{1em}

    \begin{subfigure}[b]{0.4\textwidth}
        \centering
        \includegraphics[width=\textwidth]{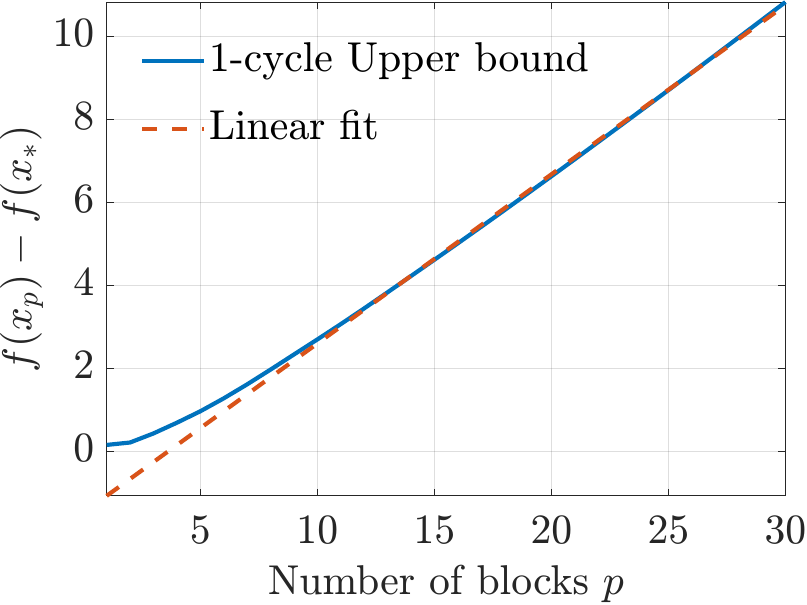}
        \caption{$1$ cycle}
        \label{fig:blocks_plot1}
    \end{subfigure}
    \hspace{0.001\textwidth}
    \begin{subfigure}[b]{0.4\textwidth}
        \centering
        \includegraphics[width=\textwidth]{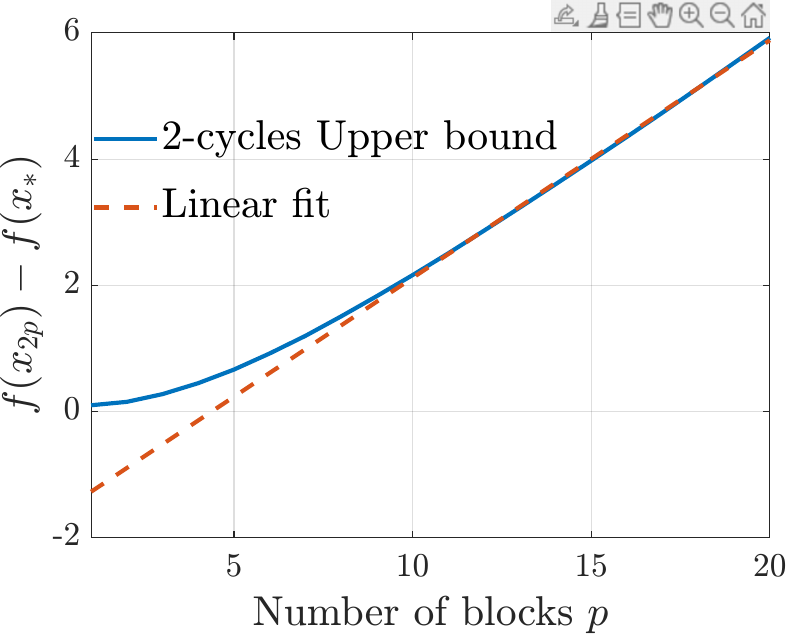}
        \caption{$2$ cycles}
        \label{fig:blocks_plot2}
    \end{subfigure}
    \hspace{0.001\textwidth}
    \begin{subfigure}[b]{0.4\textwidth}
        \centering
        \includegraphics[width=\textwidth]{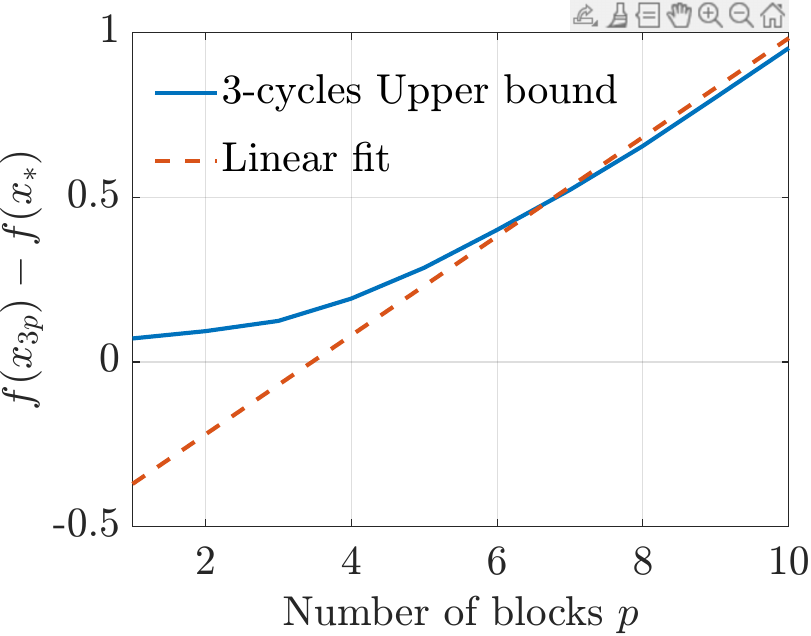}
        \caption{$3$ cycles}
        \label{fig:blocks_plot3}
    \end{subfigure}

    \caption{\small Linear behavior of our PEP upper bounds with respect to the number of blocks $p$ for~\hyperref[alg:CCD]{(CCD)} in the Setting INIT. The top plot shows the evolution of the PEP upper bound on worst-case of~\hyperref[alg:CCD]{(CCD)} with the number of blocks for $1$, $2$ and $3$ cycles while the bottom row shows numerical linear fits for each of these plots.}
    \label{fig:p_dep}
\end{figure}

\subsection{Optimal constant step sizes for~\hyperref[alg:CCD]{(CCD)}}\label{sec:opt_steps}

\noindent We now utilize PEP to gain insights into the optimal choice of step sizes for~\hyperref[alg:CCD]{(CCD)}$(\gamma_\ell)$. We specifically focus on step sizes of the form $\gamma_\ell = \frac{\gamma}{L_\ell}$ for each coordinate block $\ell \in \{1,\dots,p\}$. Under Setting INIT, we can assume without loss of generality that $\textbf{L} = (1,\dots,1)$ and $R_i = 1$, thanks to the scaling law provided in Theorem~\ref{th:coord_L_invar}. Figure~\ref{fig:coords_opt_steps} illustrates the evolution of the PEP upper bound as a function of the step size $\gamma$ for the $2$-block, $3$-block, and $4$-block variants of~\hyperref[alg:CCD]{(CCD)}$\left(\frac{\gamma}{L_\ell}\right)$ after one cycle ($K = 1$) and three cycles ($K = 3$). Additionally, Table~\ref{tab:opt_steps} provides approximate numerical values of the optimal step sizes minimizing the PEP upper bound for these cases. We observe that the optimal step size decreases as either the number of blocks or the number of cycles increases. This observation may be explained by the fact that increasing the number of coordinate blocks $p$ or cycles $K$ leads to a higher total number of partial gradient updates, thereby making shorter step sizes more appropriate. Furthermore, we note that the optimal step sizes identified through our numerical experiments are consistently smaller ($\gamma_\ell \leq 1$) than the conjectured optimal step sizes for full gradient descent discussed in~\cite{drori2014perf,taylor2017smooth}. Unfortunately, deriving analytical expressions for these optimal step sizes currently appears out of reach.
\begin{figure}[H]
    \centering
    \begin{subfigure}[b]{0.45\textwidth}
        \centering
        \includegraphics[width=\textwidth]{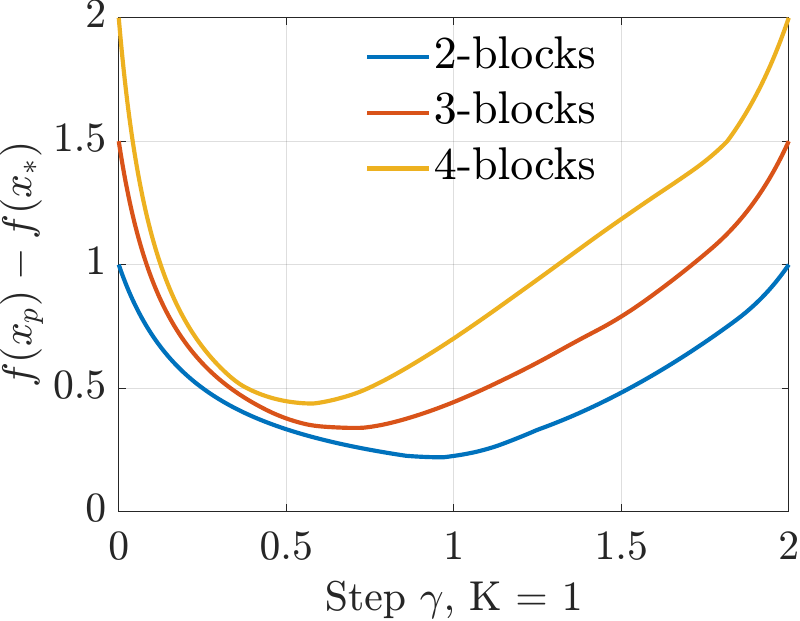}
        \caption{After $1$ cycle}
        \label{fig:opt_steps_1}
    \end{subfigure}
    \hspace{0.001\textwidth}
    \begin{subfigure}[b]{0.45\textwidth}
        \centering
        \includegraphics[width=\textwidth]{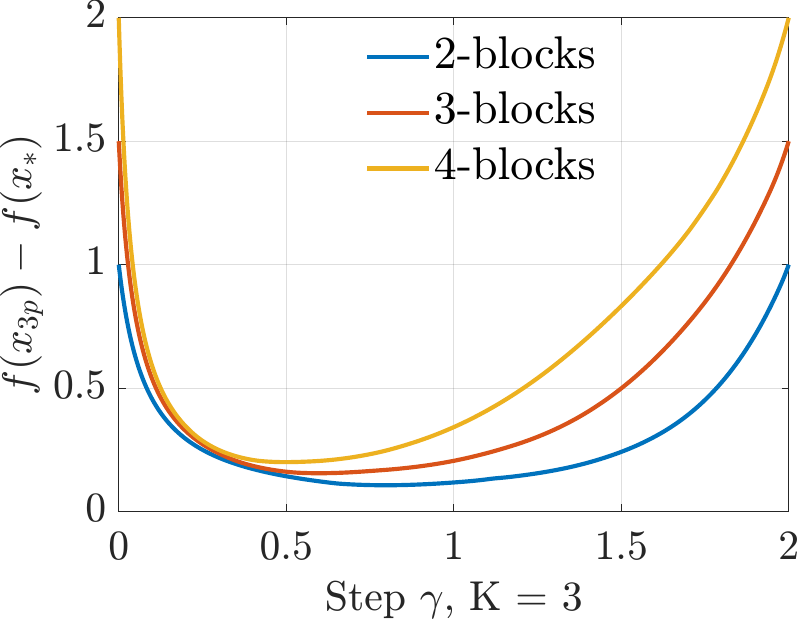}
        \caption{After $3$ cycles}
        \label{fig:opt_steps_3}
    \end{subfigure}
    \caption{\small PEP upper-bound on the convergence of~\hyperref[alg:CCD]{CCD}\,$(\frac{\gamma}{L_\ell})$ in the Setting INIT with respect to the step-size $\gamma$ for $2$, $3$, and $4$ blocks of coordinates after $1$ and $3$ cycles.}
    \label{fig:coords_opt_steps}
\end{figure}
\begin{table}[H]
    \centering
    \resizebox{1\textwidth}{!}{%
    \begin{tabular}{|c||c||c|}
        \hline
        Number of blocks $p$ & Optimal step size $(K = 1)$ & Optimal step size $(K = 3)$ \\
        \hline
        2  & 0.967 & 0.796\\ 
        \hline
        3  & 0.700 & 0.596\\
        \hline
        4  & 0.576 & 0.496 \\   
        \hline
    \end{tabular}%
    }
    \caption{\small Optimal choice of step size $\gamma$ for $1$ and $3$ cycles of $2$-block, $3$-block and $4$-block~\hyperref[alg:CCD]{(CCD)}$\left(\frac{\gamma}{L_\ell}\right)$ }
    \label{tab:opt_steps}
\end{table}

\subsection{Improved upper bound on the worst-case convergence rate of~\hyperref[alg:AM]{(AM)} in Setting ALL.}\label{sec:AM}

\noindent We now turn our attention to the alternating minimization algorithm which alternatively minimize exactly the objective function along a block of coordinates:
\begin{algorithm}[H]
    \caption{Alternating minimization (AM)}\label{alg:AM}
\begin{algorithmic}
\State \textbf{Input} function $f$ defined over $\mathbb{R}^d$ with $p$ blocks, starting point $x_0 \in \mathbb{R}^d$, number of cycles $K$.
\State Define $N = pK$. For $i = 1 \dots N$ ,
\State \hspace{1.5cm} Set $\ell = \text{mod}(n,p) + 1$
\State \hspace{1.5cm} $x_i = \arg \min_{z = x_{i-1} + U_\ell\Delta x^{(\ell)}, \; \Delta x^{(\ell)} \in \mathbb{R}^{d_\ell}} f(z)$
\end{algorithmic}
\end{algorithm}
\noindent As we did previously for~\hyperref[alg:CCD]{(CCD)}, we present a slightly adapted convergence theorem for~\hyperref[alg:AM]{(AM)} so that the bound remains valid under the assumptions of our Setting ALL.
\begin{theorem}\cite[Theorem 5.2]{beck2013CCD}\label{th:coord_am_bound} Given a vector of nonnegative constants $\textbf{L} = (L_1,L_2)$ and a function $f \in \mathcal{F}^{\text{coord}}_{0,\textbf{L}}(\mathbb{R}^d)$, let $\{x_i\}_{i \in \{1,\dots,N\}}$, $N = 2K$, then the sequence of iterates generated by the $K$ cycles of $2$-block~\hyperref[alg:AM]{(AM)} on $f$ satisfies:
\begin{equation*}
    f(x_{2K}) - f_* \leq \frac{2 \min \{L_1,L_2\} R^2_a}{K-1},
\end{equation*}
where $f_*$ is the minimal value of $f$ and $R_a = \min_{x_* \in X_*} \max_{k \in 1,\dots,K} \|x_{pK}-x_*\|$.
\end{theorem}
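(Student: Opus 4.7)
The plan is to adapt \cite[Theorem~5.2]{beck2013CCD} in exactly the same fashion as Theorem~\ref{th:ccd_beck} was adapted from their Corollary~3.7: the argument is almost verbatim, and the only substantive change concerns the distance bound linking the iterates to an optimal point. I would keep the overall three-step structure: a one-cycle descent inequality, a bound on $\|x_{2k}-x_*\|$, and convexity used to relate $f(x_{2k})-f_*$ to $\|\nabla f(x_{2k})\|$. Combining these produces a scalar recursion on $a_k = f(x_{2k})-f_*$ whose standard inversion yields the claimed $\mathcal{O}(1/K)$ rate with constant $2\min\{L_1,L_2\}$.

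For the descent step I would exploit two features of AM. First, since each AM step exactly minimizes $f$ along one block, it must decrease the objective by at least as much as a gradient step with stepsize $1/L_\ell$, so the quadratic upper bound of Lemma~\ref{lm:coord_upper_bound} yields $f(x_{i-1})-f(x_i) \geq \tfrac{1}{2L_\ell}\|\nabla^{(\ell)}f(x_{i-1})\|^2$. Second, the optimality of the AM step implies $\nabla^{(\ell)}f(x_i)=0$ immediately after the update of block $\ell$, so at the end of each cycle one block of the gradient vanishes. Chaining the two half-cycle inequalities across a full cycle and bounding the relevant coefficient by $\min\{L_1,L_2\}$ produces a cycle-level descent inequality of the form $f(x_{2k})-f(x_{2k+2}) \geq \tfrac{1}{2\min\{L_1,L_2\}}\|\nabla f(x_{2k+2})\|^2$.

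The only modification compared to the original proof concerns the distance bound. In \cite{beck2013CCD}, compactness of $\{x:f(x)\leq f(x_0)\}$, together with the monotonic decrease of AM, gives a uniform bound $\|x_{2k}-x_*\|\leq \max_{x_*\in X_*}\max_{x:f(x)\leq f(x_0)}\|x-x_*\|$ valid for every $k$. Instead, I would pick the specific minimizer $x_*\in X_*$ attaining the outer minimum in the definition $R_a = \min_{x_*\in X_*}\max_k \|x_{2k}-x_*\|$; this gives $\|x_{2k}-x_*\|\leq R_a$ for all $k\in\{1,\dots,K\}$, which is precisely what the rest of the proof needs. Convexity then yields $f(x_{2k})-f_* \leq R_a\|\nabla f(x_{2k})\|$, and combining with the cycle descent gives the recursion $a_{k+1} \leq a_k - \tfrac{a_{k+1}^2}{2\min\{L_1,L_2\}R_a^2}$. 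Dividing by $a_k a_{k+1}$, telescoping from $k=1$ to $k=K-1$, and using monotonicity of $a_k$ produces the stated $\frac{2\min\{L_1,L_2\}R_a^2}{K-1}$ bound.

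The main point to verify is that the original Beck--Tetruashvili derivation invokes the distance bound only at the cycle endpoints $x_{2k}$ and never at intermediate iterates, since our weaker $R_a$ controls only the cycle-end iterates whereas the compactness bound used in \cite{beck2013CCD} controls all iterates uniformly. Once this compatibility check passes, the rest of the proof can be copied from the original reference without further modification, which is exactly the style of adaptation already used for Theorem~\ref{th:coord_beck_ccd}.
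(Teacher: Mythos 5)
Your proposal is correct and matches the paper's approach: the paper's proof is simply the remark that the same adaptation used for Theorem~\ref{th:ccd_beck} applies, namely choosing the specific minimizer $x_*$ attaining the min--max in the definition of $R_a$ and observing that the Beck--Tetruashvili argument only invokes the distance bound at the cycle endpoints $x_{2k}$. Your additional reconstruction of the internal structure of the original proof is consistent with the cited reference but not needed beyond that compatibility check.
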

\begin{proof}
 Same reasoning as in Theorem~\ref{th:ccd_beck} for~\hyperref[alg:CCD]{CCD} applies. \qed \end{proof}
\hyperref[alg:AM]{(AM)} is not a fixed-step first-order algorithm and thus we need to adapt the Gram lifting approach proposed in Section~\ref{sec:cvx_PEP} for fixed-step algorithms to reformulate Problem~\eqref{R-PEP-coord} as an SDP for~\hyperref[alg:AM]{(AM)}. This requires considering the matrices $P^{(\ell)} = [g^{(\ell)}_0, \dots,g^{(\ell)}_N,\allowbreak x^{(\ell)}_0, \dots,x^{(\ell)}_N]$ and the corresponding Gram matrices $G^{(\ell)} = P^{(\ell)\top} P^{(\ell)} \in \mathbb{S}^{2N+2}$. Then, for all \( \ell \in \{1,\dots,p\} \) and \( i \in \{0,\dots,N\} \), we have $x^{(\ell)}_i = P^{(\ell)} h_i$, with $h_i = e_{N+2+i}$, and $ g^{(\ell)}_i = P^{(\ell)} u_i$, with $u_i = e_{i+1}$. \( \{e_i\}_{i \in \{1,\dots,2N+2\}} \) is the canonical basis of \( \mathbb{R}^{2N+2} \). The update step in~\hyperref[alg:AM]{(AM)}, given by $    x_i = \text{argmin}_{z = x_{i-1} + U_\ell\Delta x^{(\ell)}, \; \Delta x^{(\ell)} \in \mathbb{R}^{d_\ell}} f(z)$, can be modeled in a PEP through the conditions $ \|x^{(s)}_{i}\|^2 = \|x^{(s)}_{i-1}\|^2, \quad \forall s \in \{1,\dots,p\}, \; s \neq \ell$ and $\|g^{(\ell)}_i\|^2 = 0$, which can be expressed as linear constraints on the Gram matrices $G^{(\ell)}$. The interpolation conditions, performance criterion, and initial condition can be handle in the same manner as for fixed-step first-order algorithms.

\medskip

\noindent We analyze the $2$-block alternating minimization algorithm~\hyperref[alg:AM]{(AM)} under Setting ALL for various choices of Lipschitz constants with $R_a = 1$. Although our framework accommodates an arbitrary number of blocks, we restrict our analysis to the $2$-block case to facilitate comparison between our numerical upper bounds and the analytical bound previously established in~\cite{beck2013CCD} (see Theorem~\ref{th:coord_am_bound}). Figure~\ref{fig:coords_AM} indicates that the existing bound for the $2$-block~\hyperref[alg:AM]{(AM)} can be improved by approximately a multiplicative factor of $2$. This numerical result confirms the observations regarding the convergence of~\hyperref[alg:AM]{(AM)} made in~\cite{kamri} using a PEP framework over the class of smooth convex functions.

\begin{figure}[H]
    \centering
    \begin{subfigure}[b]{0.4\textwidth}
        \centering
        \includegraphics[width=\textwidth]{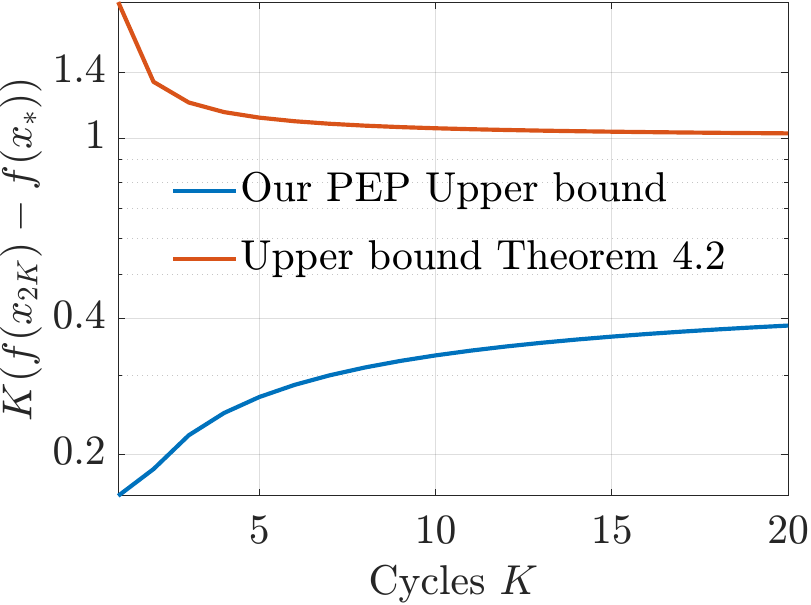}
        \caption{$\mathbf{L} = (1,1)$}
        \label{fig:AM11}
    \end{subfigure}
    \hspace{0.001\textwidth}
    \begin{subfigure}[b]{0.4\textwidth}
        \centering
        \includegraphics[width=\textwidth]{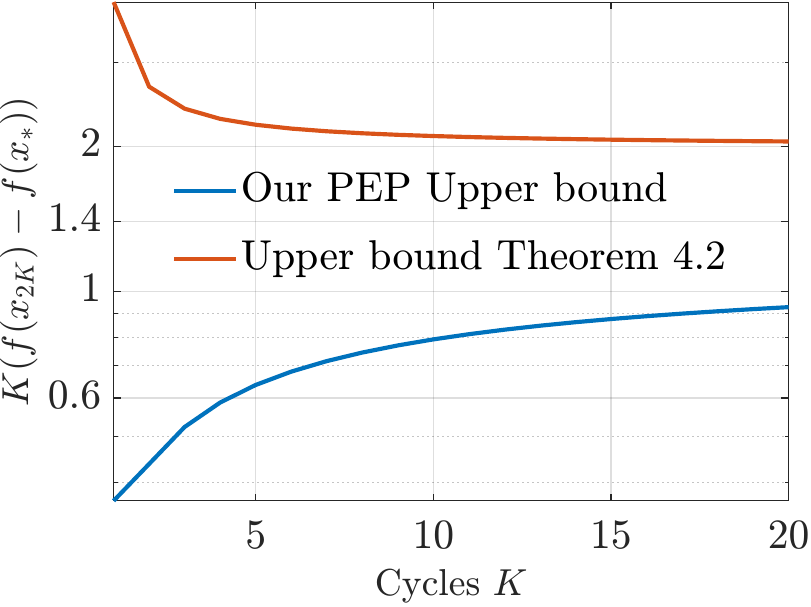}
        \caption{$\mathbf{L} = (2,3)$}
        \label{fig:AM23}
    \end{subfigure}
    \hspace{0.001\textwidth}
    \begin{subfigure}[b]{0.4\textwidth}
        \centering
        \includegraphics[width=\textwidth]{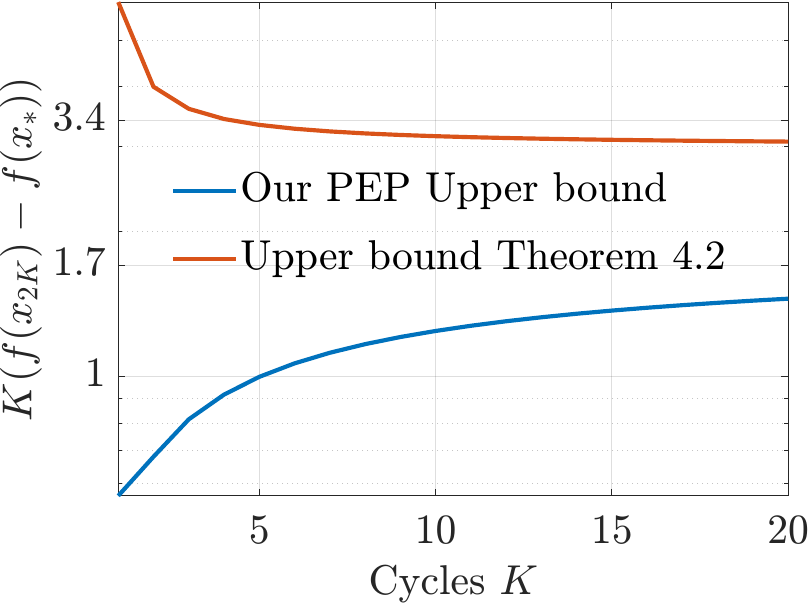}
        \caption{$\mathbf{L} = (3,5)$}
        \label{fig:AM35}
    \end{subfigure}

    \caption{Comparison of upper bounds on the worst-case for 2-block~\hyperref[alg:AM]{(AM)} for $\mathbf{L} = (1,1)$, $\mathbf{L} = (2,3)$, and $\mathbf{L} = (3,5)$ in the Setting ALL.}
    \label{fig:coords_AM}
\end{figure}

\subsection{\hyperref[alg:CACD]{(CACD)} has a slower convergence rate than~\hyperref[alg:RACD]{(RACD)}}\label{sec:CACD}

We now introduce a third less standard deterministic fixed-step first-order algorithm which is a cyclic version of the accelerated random coordinate descent algorithm from \cite{fercoq2015coord}. We denote this algorithm by~\hyperref[alg:CACD]{(CACD)}. Our interest with~\hyperref[alg:CACD]{(CACD)} lies in comparing its worst-case performance to that of its random version~\hyperref[alg:RACD]{(RACD)}, which features a proven accelerated convergence rate on average.
\begin{algorithm}[H]
\caption{Cyclic Accelerated Coordinate Descent (CACD) }\label{alg:CACD}
\begin{algorithmic}
\State \textbf{Input}  function $f$ defined over $\mathbb{R}^d$ with $p$ blocks, starting points $x_0 = z_0 \in \mathbb{R}^d$, $\theta_{0} = \frac{1}{p}$, number of cycles $K$, step-size $\{\gamma_{\ell}\}^p_{i = 1}$
\State Define $N = pK$. For $i = 1 \dots N$ 
\State \hspace{1.5cm} Set $\ell = \text{mod}(i,p) + 1$
\State \hspace{1.5cm} $y_{i-1} = (1-\theta_{i-1})x_{i-1} + \theta_{i-1} z_{i-1}$
\State \hspace{1.5cm} $z_i = z_{i-1} - \frac{\gamma_{\ell}}{p \theta_{i-1}} U_\ell \nabla^{(\ell)}f(y_{i-1})$
\State \hspace{1.5cm} $x_i = y_{i-1} + p \theta_{i-1} (z_{i} - z_{i-1})$
\State \hspace{1.5cm} $\theta_{i} = \frac{\sqrt{\theta_{i-1}^4 + 4 \theta_{i-1}^2} -\theta_{i-1}^2}{2}$
\end{algorithmic}
\end{algorithm}

\begin{algorithm}[H]
\caption{Random Accelerated Coordinate Descent (RACD) }\label{alg:RACD}
\begin{algorithmic}
\State \textbf{Input}  function $f$ defined over $\mathbb{R}^d$ with $p$ blocks, starting points $x_0 = z_0 \in \mathbb{R}^d$, $\theta_{0} = \frac{1}{p}$, number of steps $N$, step-size $\{\gamma_{\ell}\}^p_{i = 1}$
\State For $i = 1 \dots N$ 
\State \hspace{1.5cm} Sample uniformly at random $\ell \in \{1,\dots,p\}$
\State \hspace{1.5cm} $y_{i-1} = (1-\theta_{i-1})x_{i-1} + \theta_{i-1} z_{i-1}$
\State \hspace{1.5cm} $z_i = z_{i-1} - \frac{\gamma_{\ell}}{p \theta_{i-1}} U_\ell \nabla^{(\ell)}f(y_{i-1})$
\State \hspace{1.5cm} $x_i = y_{i-1} + p \theta_{i-1} (z_{i} - z_{i-1})$
\State \hspace{1.5cm} $\theta_{i} = \frac{\sqrt{\theta_{i-1}^4 + 4 \theta_{i-1}^2} -\theta_{i-1}^2}{2}$
\end{algorithmic}
\end{algorithm}
\begin{theorem}\cite[Theorem 3]{fercoq2015coord}
Given a number of blocks $p$, a vector of nonnegative constants $\textbf{L} = (L_1, \dots, L_p)$ and a function $f \in \mathcal{F}^{\text{coord}}_{0,\textbf{L}}(\mathbb{R}^d)$, let $\{x_i\}_{i \in \{1,\dots,N\}}$ be the iterates generated by $N$ steps of $p$-block of random accelerated coordinate descent with step-sizes $\gamma_\ell = \frac{1}{L_\ell}, \; \forall \ell \in \{1,\dots,p\}$ on $f$, then it holds that:
\begin{equation*}
    \mathbb{E}[f(x_N) - f^*] \leqslant \frac{4p^2}{(N-1 + 2p)^2} R^2
\end{equation*}
where $f^*$ is the minimal value of $f$ and $R^2 = \left(1 - \frac{1}{p}\right) \left(f(x_0) - f^*\right) + \frac{1}{2} \|x_{0} - x_*\|_{\textbf{L}}^2$.
\end{theorem}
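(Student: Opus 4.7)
The plan is to follow the classical Lyapunov/estimate sequence approach for accelerated first order methods (à la Nesterov), adapted to the randomized coordinate setting. I will construct a potential $\Phi_i$ that is nonincreasing in expectation, that satisfies $\Phi_0 = p^2 R^2$, and whose value at step $N$ controls $f(x_N) - f^*$ with the announced prefactor $\tfrac{4p^2}{(N-1+2p)^2}$.

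First I would establish a coordinate sufficient-decrease inequality. The update rules give $x_i - y_{i-1} = p\theta_{i-1}(z_i - z_{i-1}) = -\tfrac{1}{L_\ell} U_\ell \nabla^{(\ell)} f(y_{i-1})$, so $x_i$ and $y_{i-1}$ coincide outside block $\ell$. Applying the block upper bound from Lemma~\ref{lm:coord_upper_bound} at $y_{i-1}$ with $h^{(\ell)} = -\tfrac{1}{L_\ell}\nabla^{(\ell)} f(y_{i-1})$ yields $f(x_i) \leq f(y_{i-1}) - \tfrac{1}{2L_\ell}\|\nabla^{(\ell)} f(y_{i-1})\|^2$, and averaging over the uniform random choice of $\ell$ gives
\begin{equation*}
\mathbb{E}_\ell[f(x_i)] \leq f(y_{i-1}) - \tfrac{1}{2p}\|\nabla f(y_{i-1})\|^{*2}_L.
\end{equation*}
In parallel, expanding $z_i - x_*$ in the $\|\cdot\|_L$-norm using $z_i - z_{i-1} = -\tfrac{1}{p\theta_{i-1}L_\ell}U_\ell \nabla^{(\ell)} f(y_{i-1})$ and averaging over $\ell$ produces
\begin{equation*}
\mathbb{E}_\ell[\|z_i - x_*\|^2_L] = \|z_{i-1} - x_*\|^2_L - \tfrac{2}{p^2\theta_{i-1}}\langle \nabla f(y_{i-1}), z_{i-1} - x_*\rangle + \tfrac{1}{p^3\theta_{i-1}^2}\|\nabla f(y_{i-1})\|^{*2}_L.
\end{equation*}

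Next I would link the three sequences through convexity. Writing $y_{i-1} - (1-\theta_{i-1})x_{i-1} - \theta_{i-1}x_* = \theta_{i-1}(z_{i-1} - x_*)$ and applying $f(y_{i-1}) - f(x_{i-1}) \leq \langle \nabla f(y_{i-1}), y_{i-1} - x_{i-1}\rangle$ together with $f(y_{i-1}) - f^* \leq \langle \nabla f(y_{i-1}), y_{i-1} - x_*\rangle$, one obtains the key identity
\begin{equation*}
\tfrac{1}{\theta_{i-1}^2}(f(y_{i-1}) - f^*) - \tfrac{1}{\theta_{i-1}}\langle \nabla f(y_{i-1}), z_{i-1} - x_*\rangle \leq \tfrac{1-\theta_{i-1}}{\theta_{i-1}^2}(f(x_{i-1}) - f^*).
\end{equation*}
I then set $\Phi_i = \tfrac{1-\theta_i}{\theta_i^2}(f(x_i) - f^*) + \tfrac{p^2}{2}\|z_i - x_*\|^2_L$, multiply the decrease bound of Step 1 by $\tfrac{1-\theta_i}{\theta_i^2}=\tfrac{1}{\theta_{i-1}^2}$ (using the defining relation $\theta_i^2 = \theta_{i-1}^2(1-\theta_i)$), multiply the distance recursion by $\tfrac{p^2}{2}$, and add. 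The gradient-squared terms cancel exactly thanks to the choice of weights, and the convexity identity above bounds the remainder by $\Phi_{i-1}$, giving $\mathbb{E}[\Phi_i \mid \textrm{past}] \leq \Phi_{i-1}$. Iterating and using the tower property yields $\mathbb{E}[\Phi_N] \leq \Phi_0$; direct computation with $\theta_0 = 1/p$ and $z_0 = x_0$ shows $\Phi_0 = p^2[(1-\tfrac{1}{p})(f(x_0)-f^*) + \tfrac{1}{2}\|x_0 - x_*\|^2_L] = p^2 R^2$. Finally, the $\theta$ recurrence implies $\theta_i^{-1} - \theta_{i-1}^{-1} \geq \tfrac{1}{2}$, so $\theta_{N-1}^{-1} \geq p + \tfrac{N-1}{2}$, giving $\tfrac{1-\theta_N}{\theta_N^2} = \theta_{N-1}^{-2} \geq \tfrac{(N-1+2p)^2}{4}$, and the stated bound follows from $\tfrac{1-\theta_N}{\theta_N^2}\mathbb{E}[f(x_N) - f^*] \leq \mathbb{E}[\Phi_N] \leq p^2 R^2$.

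The main obstacle is Step 4: verifying algebraically that the Lyapunov weights make the gradient-squared norms cancel and that the $(1-\theta_{i-1})(f(x_{i-1})-f^*)/\theta_{i-1}^2$ remainder telescopes correctly through the $\theta$-recurrence. A secondary subtlety is keeping track of the two sources of randomness: the inner expectation $\mathbb{E}_\ell$ only involves the block sampled at step $i$, so one must argue carefully via the tower property that $\mathbb{E}[\Phi_N]\leq \Phi_0$ iterates across the independent coordinate draws. Once these two points are pinned down, the remainder is routine bookkeeping.
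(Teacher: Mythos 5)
Your proof is correct, but it takes a genuinely different route from the paper. The paper's own proof is essentially a two-line verification: it observes that for $f \in \mathcal{F}^{\text{coord}}_{0,\textbf{L}}(\mathbb{R}^d)$ the (ESO) assumption of \cite[Theorem 3]{fercoq2015coord} holds with $\tau = 1$ and $v_\ell = L_\ell$, and then invokes that theorem as a black box. You instead re-derive the accelerated rate from scratch via the Lyapunov function $\Phi_i = \tfrac{1-\theta_i}{\theta_i^2}(f(x_i)-f^*) + \tfrac{p^2}{2}\|z_i - x_*\|^2_{\textbf{L}}$, which is in effect a self-contained reconstruction of the Fercoq--Richt\'arik argument specialized to uniform single-block sampling. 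I checked the key computations and they go through: the identity $x_i - y_{i-1} = -\tfrac{1}{L_\ell}U_\ell \nabla^{(\ell)}f(y_{i-1})$ combined with Lemma~\ref{lm:coord_upper_bound} gives the expected decrease $\mathbb{E}_\ell[f(x_i)] \le f(y_{i-1}) - \tfrac{1}{2p}\|\nabla f(y_{i-1})\|^{*2}_{\textbf{L}}$; the expected distance recursion carries the coefficient $\tfrac{1}{p^3\theta_{i-1}^2}$ on the squared dual norm, so after weighting by $\tfrac{1}{\theta_{i-1}^2} = \tfrac{1-\theta_i}{\theta_i^2}$ and $\tfrac{p^2}{2}$ the two gradient-squared terms do cancel exactly; the convexity step is a correct consequence of $\theta_{i-1}(z_{i-1}-x_*) = y_{i-1}-(1-\theta_{i-1})x_{i-1}-\theta_{i-1}x_*$; and the recurrence $\theta_i^2 = \theta_{i-1}^2(1-\theta_i)$ indeed yields $\theta_{i}^{-1}-\theta_{i-1}^{-1} = \theta_{i-1}/(\theta_{i-1}+\theta_i) \ge \tfrac12$, hence $\theta_{N-1}^{-2} \ge \tfrac{(N-1+2p)^2}{4}$ and $\Phi_0 = p^2R^2$. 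The trade-off is clear: the paper's route is short but leans entirely on an external result whose hypotheses must be matched, while yours is longer but fully verifiable within the paper's own notation and functional class, and makes explicit exactly where coordinate-wise smoothness and the uniform sampling enter. The one point to be pedantic about in a final write-up is the conditioning: state explicitly that $\mathbb{E}[\Phi_i \mid \mathcal{F}_{i-1}] \le \Phi_{i-1}$ where $\mathcal{F}_{i-1}$ is the $\sigma$-algebra generated by the first $i-1$ block draws, so that taking total expectations and iterating is legitimate.
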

\begin{proof}
With the notations of Theorem $3$ in~\cite{fercoq2015coord}, taking $\tau = 1$, it is straightforward to show that if $f$ belongs to $\mathcal{F}^{\text{coord}}_{0,\textbf{L}}(\mathbb{R}^d)$, $f$ verifies the $\text{(ESO)}$ condition~\cite[Assumption 1]{fercoq2015coord} with $v_\ell = L_\ell, \; \forall \ell \in \{1,\dots,p\}$, thus the proof of Theorem $3$ in~\cite{fercoq2015coord} remains valid in our case and we obtain the desired result.
\qed \end{proof}
Note that~\hyperref[alg:CACD]{(CACD)} is a deterministic fixed-step first-order BCD algorithm and convex relaxation of the PEP for this algorithm can be derived using the Gram matrix lifting technique presented in Section~\ref{sec:cvx_PEP}. Since~\hyperref[alg:RACD]{(RACD)} is a random algorithm its analysis using PEP requires some adjustment to the framework we developed for deterministic algorithms.
If \( p \) is the number of blocks and \( N \) the number of steps, then a random algorithm can generate up to \( p^N \) different sequences of block of coordinates. Each sequence corresponds to a deterministic algorithm, denoted by \( \mathcal{M}^{\text{coord}}_r \) for $r \in \{1,\dots,p^N\}$, which can be represented and analyzed using PEP with a corresponding set \( \mathcal{S}^r_N = \{(x_{i,r}, g_{i,r}, f_{i,r})\}_{i \in I} \) with $I = \{0,1\dots,N,*\}$, as described earlier. To represent the average performance of the random algorithm under analysis, we need to impose that all the corresponding deterministic algorithms are applied on the same function. This is done by defining the set $\mathcal{S}_N = \bigcup^{p^N}_{r = 1} \mathcal{S}^r_N$ and imposing the interpolation conditions on $\mathcal{S}_N$ to ensure that all the sets $\mathcal{S}^r_N$ are interpolable by the same function. Furthermore, we also need to impose the same initial iterate for all the algorithms and measure the performance with respect to the same optimal point. This is done in PEP by imposing the conditions $x_{0,r} = x_0$ and $\{x_{*,r}, g_{*,r}, f_{*,r}\} = \{0, 0, 0\}$ for all $r \in \{1,\dots,p^N\}$. We then choose the expectation of the performance of each deterministic algorithm \( \mathcal{M}^{\text{coord}}_r \) as the performance criterion. Consequently, the PEP for a random algorithm can be conceptually formulated as:

\begin{equation}\tag{Rdn-PEP}\label{rdn-PEP}
\begin{aligned}
    &\sup_{\mathcal{S}^r_N =  \{(x_{i,r}, g_{i,r}, f_{i,r})\}_{i \in I}} \sum^{p^N}_{r = 1} \mathbb{P}_r \mathcal{P}(\mathcal{S}^r_N),\\
     & \text{such that } \mathcal{S}_N = \bigcup^{p^N}_{r = 1} \mathcal{S}^r_N\\
     &\mathcal{S}_N \text{ is } \mathcal{F}^{\text{coord}}_{0,\mathbf{L}}(\mathbb{R}^d)\text{–interpolable}, \\
     & x_{1,r}, \ldots, x_{N,r} \text{ by method } \mathcal{M}_r^{\text{coord}} \quad \forall r \in \{1,\dots,p^N\}, \\
     & x_{0,r} = x_0 \quad \forall r \in \{1,\dots,p^N\}\\
    & \{x_{*,r}, g_{*,r}, f_{*,r}\} = \{0, 0, 0\}, \quad \forall r \in \{1,\dots,p^N\} \\
    & \|x_{0} - x_{*}\|_2 \leq R, 
\end{aligned}
\end{equation}
\noindent where \( \mathbb{P}_r \) represents the probability that the sequence \( r \) of blocks occurs. For fixed-step first-order random BCD algorithms, this problem can be cast as a convex semidefinite program using a similar construction to the one presented above for fixed-step first-order deterministic algorithms. However, the resulting SDP involves \( p \) matrices of size \( (Np^N+2) \times (Np^N+2) \), making it computationally prohibitive for analyzing random algorithms when the number of blocks and steps are large.

\medskip

\noindent We now present numerical experiments supporting the observation that~\hyperref[alg:CACD]{(CACD)} with step sizes $\gamma_\ell = \frac{1}{L_{\ell}}$, for all $\ell \in \{1,\dots,p\}$, exhibits a slower convergence rate compared to its randomized variant. We place ourselves in Setting INIT. As previously discussed, this allows us to assume without loss of generality that $\textbf{L} = (1,\dots,1)$ and $R_i = 1$ due to the scaling law provided by Theorem~\ref{th:coord_L_invar}. In Figure~\ref{fig:7}, we present valid upper bounds on the worst-case performance of $2$-block and $3$-block~\hyperref[alg:CACD]{(CACD)}$\left(\frac{1}{L_\ell}\right)$, multiplied by the squared number of cycles $K^2$. This quantity appears to be increasing in $K$, suggesting a convergence rate slower than $\mathcal{O}(1/K^2)$. To further investigate this gap between~\hyperref[alg:CACD]{(CACD)} and its random variant~\hyperref[alg:RACD]{(RACD)}, we report in Table~\ref{tab:coords_CACD} PEP upper bounds for all possible sequences of coordinate-block updates after $K = 2$ cycles of $2$-block~\hyperref[alg:CACD]{(CACD)}$\left(\frac{1}{L_\ell}\right)$. Additionally, we provide an upper bound on the worst-case expected performance after $N = 4$ steps of~\hyperref[alg:RACD]{(RACD)}$\left(\frac{1}{L_\ell}\right)$, denoted by $\mathcal{W}^{RACD}$. The bound $\mathcal{W}^{RACD}$ is computed using the PEP framework for random algorithms described in Problem~\eqref{rdn-PEP}. It is noteworthy that $\mathcal{W}^{RACD}$ is smaller than all the upper bounds for every possible deterministic sequence of updates. This highlights the fact that randomness seems to play a significant role in acceleration for BCD algorithms over the class $\mathcal{F}^{\text{coord}}_{0,\textbf{L}}(\mathbb{R}^d)$. Intuitively, it appears that for each fixed sequence of coordinate block updates, one can construct a specific worst-case function on which~\hyperref[alg:CACD]{(CACD)}$\left(\frac{1}{L_\ell}\right)$ performs poorly. However, it seems that no single function leads to poor performance across all update sequences and thus on average for~\hyperref[alg:RACD]{(RACD)}$\left(\frac{1}{L_\ell}\right)$ .

\begin{table}[H]
    \centering
    \resizebox{1\textwidth}{!}{%
    \begin{tabular}{|c|c||c|}
        \hline
        Block choice type & Ordered block choices  & Worst-case \\ 
        \hline
         Cyclic (= $\mathcal{W}^\text{CACD}$) & 1    2    1    2 \text{ or }  2    1    2    1& 0.14429  \\
        \hline
         Fixed choice  & 1    2    2    1  \text{ or }  2    1    1    2   & 0.14988 \\
        \hline
        Fixed choice &       1    2    1    1 \text{ or } 2    1    2    2 &  0.16453 \\
        \hline
        Fixed choice & 1    1    2    1 \text{ or }  2    2    1    2 &  0.19574\\
        \hline
        Fixed choice &  1    2    2    2 \text{ or }  2    1    1    1 & 0.19905 \\
        \hline
        Fixed choice &  1    1    2    2   \text{ or } 2    2    1    1& 0.23462 \\
        \hline
        Fixed choice & 1    1    1    2 \text{ or } 2    2    2    1  & 0.25517 \\
        \hline
        Fixed choice & 1    1    1   1 \text{ or } 2    2    2    2    & 0.500\\ 
        \hline\hline
         Random (= $\mathcal{W}^\text{RACD}$)  &  at each step: pick 1 or 2 &    0.1046 \\   

        \hline
\end{tabular}%
}
\caption{Upper bounds on the convergence rate after $4$ iterations of~\hyperref[alg:CACD]{(CACD)}$\left(\frac{1}{L_\ell}\right)$ for every possible sequence of block coordinate updates and upper bound on the worst-case expected performance of~\hyperref[alg:RACD]{(RACD)}$\left(\frac{1}{L_\ell}\right)$, denoted by $\mathcal{W}^{\text{RACD}}$, obtained via the PEP formulation~\eqref{rdn-PEP}.}
 \label{tab:coords_CACD}
\end{table}
\begin{figure}[H]
    \centering
    \begin{subfigure}[b]{0.45\textwidth}
        \centering
        \includegraphics[width=\textwidth]{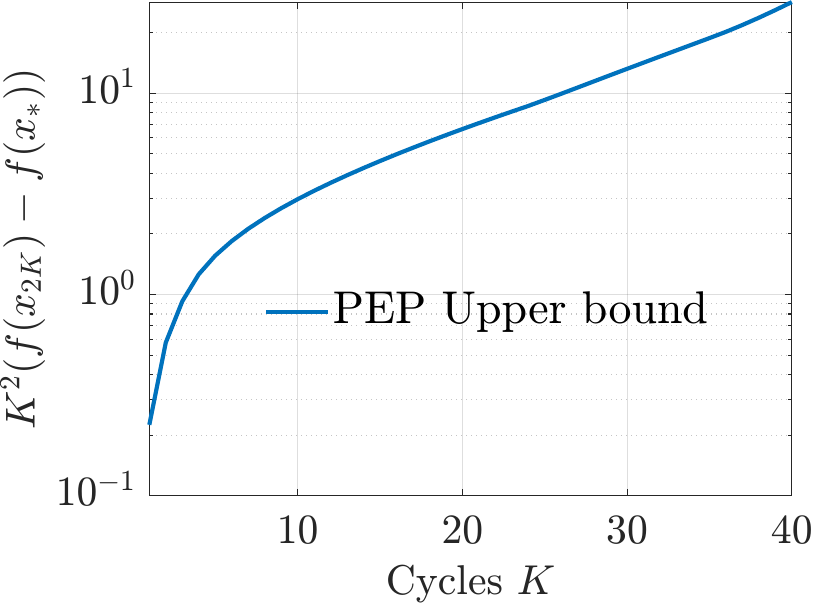}
        \caption{2-block CACD}
        \label{fig:acc_2blocks}
    \end{subfigure}
    \hspace{0.001\textwidth}
    \begin{subfigure}[b]{0.45\textwidth}
        \centering
        \includegraphics[width=\textwidth]{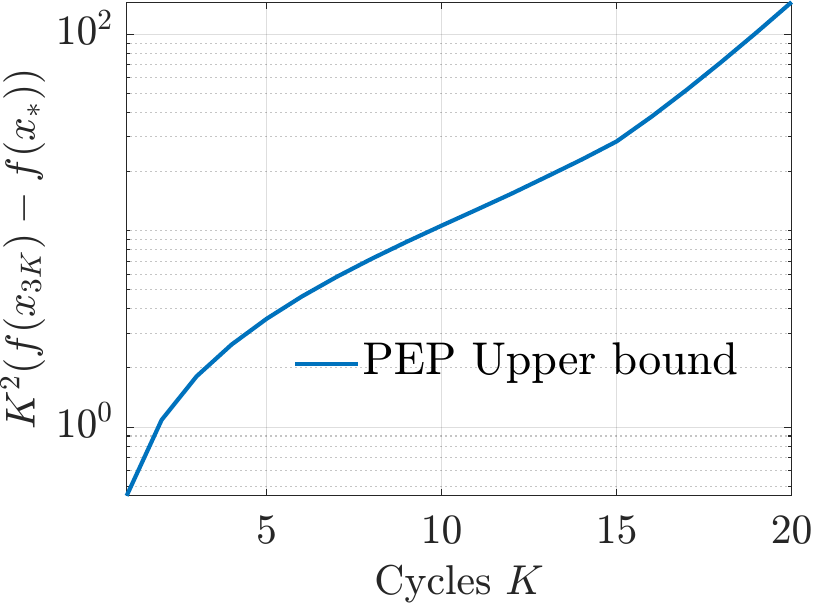}
        \caption{3-block CACD}
        \label{fig:acc_3blocks}
    \end{subfigure}

    \caption{\small PEP upper bound multiplied by the squared number of cycles $K^2$ for 2-block (left) and 3-block (right)~\hyperref[alg:CACD]{(CACD)}$\left(\frac{1}{L_\ell}\right)$, indicating convergence slower than $\mathcal{O}\left(\frac{1}{K^2}\right)$.}
    \label{fig:7}
\end{figure}

\section{Conclusion}
In this paper, we introduced a versatile convex PEP framework for the worst-case analysis of block coordinate descent algorithms. We applied this framework to provide new insights about the behaviour of several BCD algorithms. In particular, we provided improved numerical bounds for cyclic coordinate descent~\hyperref[alg:CCD]{(CCD)} and alternating minimization~\hyperref[alg:AM]{(AM)} and provided insights on the evolution of the worst-case of~\hyperref[alg:CCD]{(CCD)} with respect to the number of blocks as well as the evolution of the optimal step-size~\hyperref[alg:CCD]{(CCD)} with respect to the number of blocks and cycles. Additionally, we compared the worst-case performance of cyclic accelerated coordinate descent~\hyperref[alg:CACD]{(CACD)} to its random version and observed a slower convergence rate, indicating the significance of randomness in accelerating block coordinate descent algorithms. We demonstrated the convergence of these algorithms under the simplified constraints of Setting INIT, where only a bound on the initial distance to a minimizer is assumed, as opposed to the typically employed stronger assumptions of Setting ALL, which provide bounds on the distances between all the iterates and a minimizer. Furthermore, we established formally several results concerning the behavior of~\hyperref[alg:CCD]{(CCD)}, including scale invariance with respect to the block coordinate-wise smoothness constants, an improved descent lemma for 2-block~\hyperref[alg:CCD]{(CCD)} with a corresponding upper bound on the convergence for the residual gradient norm, and a lower bound on its worst-case performance for the function value accuracy criterion that is equal to the number of blocks multiplied by the worst-case performance of full gradient descent on smooth convex functions. We believe our PEP framework to be a useful tool to guide future research on block coordinate-wise algorithms for example by helping the tuning of step-sizes.

\medskip

\noindent Open research direction include: analytically identifying our numerical upper bounds with the corresponding proofs of convergence, deriving necessary and sufficient interpolation conditions for the functional class $\mathcal{F}^{\text{coord}}_{0,\textbf{L}}(\mathbb{R}^d)$ which would give us access to the exact worst-case performance of BCD algorithms over this class of functions, developing a computationally more efficient framework for analyzing random algorithms and designing suitable acceleration schemes for deterministic BCD algorithms.

\section*{Acknowledgments}
Y. Kamri was supported by the European Union’s MARIE SKŁODOWSKA-CURIE Actions Innovative Training Network (ITN)-ID 861137, TraDE-OPT. and by the FSR program.

\bibliographystyle{plain}  
\bibliography{JOTA_template/references}

\begin{thebibliography}{10}

\bibitem{hadi}
Hadi Abbaszadehpeivasti, Etienne de~Klerk, and Moslem Zamani.
\newblock Convergence rate analysis of randomized and cyclic coordinate descent for convex optimization through semidefinite programming.
\newblock {\em Applied Set-Valued Analysis and Optimization}, 5(2):141--153, 2023.

\bibitem{Allen2016ccd}
Zeyuan Allen-Zhu, Zheng Qu, Peter Richtarik, and Yang Yuan.
\newblock Even faster accelerated coordinate descent using non-uniform sampling.
\newblock In {\em Proceedings of The 33rd International Conference on Machine Learning}, volume~48 of {\em Proceedings of Machine Learning Research}, pages 1110--1119. PMLR, 20--22 Jun 2016.

\bibitem{beck2013CCD}
Amir Beck and Luba Tetruashvili.
\newblock On the convergence of block coordinate descent type methods.
\newblock {\em SIAM Journal on Optimization}, 23(4):2037--2060, 2013.

\bibitem{Bousselmi2024operators}
Nizar Bousselmi, Julien~M. Hendrickx, and Fran\c{c}ois Glineur.
\newblock Interpolation conditions for linear operators and applications to performance estimation problems.
\newblock {\em SIAM Journal on Optimization}, 34(3):3033--3063, 2024.

\bibitem{colla2021}
Sebastien Colla and Julien~M. Hendrickx.
\newblock Automated worst-case performance analysis of decentralized gradient descent.
\newblock In {\em 2021 60th IEEE Conference on Decision and Control (CDC)}, pages 2627--2633, 2021.

\bibitem{dasgupta2024nonlinear}
Shuvomoy Das~Gupta, Robert~M. Freund, X.~Andy Sun, and Adrien~B. Taylor.
\newblock Nonlinear conjugate gradient methods: worst-case convergence rates via computer-assisted analyses.
\newblock {\em Mathematical Programming}, 2024.

\bibitem{Diakonikolas2018ccd}
Jelena Diakonikolas and Lorenzo Orecchia.
\newblock Alternating randomized block coordinate descent.
\newblock In {\em Proceedings of the 35th International Conference on Machine Learning}, volume~80 of {\em Proceedings of Machine Learning Research}, pages 1224--1232. PMLR, 10--15 Jul 2018.

\bibitem{drori2014perf}
Yoel Drori and Marc Teboulle.
\newblock Performance of first-order methods for smooth convex minimization: A novel approach.
\newblock {\em Mathematical Programming}, 145(1–2):451--482, 2014.

\bibitem{fercoq2015coord}
Olivier Fercoq and Peter Richt{\'a}rik.
\newblock Accelerated, parallel and proximal coordinate descent.
\newblock {\em SIAM Journal on Optimization}, 25(4):1997--2023, 2015.

\bibitem{gorbunov2023minmax}
Eduard Gorbunov, Adrien Taylor, Samuel Horv\'{a}th, and Gauthier Gidel.
\newblock Convergence of proximal point and extragradient-based methods beyond monotonicity: the case of negative comonotonicity.
\newblock In {\em Proceedings of the 40th International Conference on Machine Learning}, volume 202 of {\em Proceedings of Machine Learning Research}, pages 11614--11641. PMLR, 23--29 Jul 2023.

\bibitem{Goujaud2022cyclic}
Baptiste Goujaud, Damien Scieur, Aymeric Dieuleveut, Adrien~B. Taylor, and Fabian Pedregosa.
\newblock Super-acceleration with cyclical step-sizes.
\newblock In {\em Proceedings of the 25th International Conference on Artificial Intelligence and Statistics}, volume 151 of {\em Proceedings of Machine Learning Research}, pages 3028--3065, 2022.

\bibitem{Mert2017ccd}
Mert Gurbuzbalaban, Asuman Ozdaglar, Pablo~A. Parrilo, and Nuri Vanli.
\newblock When cyclic coordinate descent outperforms randomized coordinate descent.
\newblock In {\em Advances in Neural Information Processing Systems}, volume~30, 2017.

\bibitem{Hanzely2019ccd}
Filip Hanzely and Peter Richtárik.
\newblock Accelerated coordinate descent with arbitrary sampling and best rates for minibatches.
\newblock In {\em Proceedings of the Twenty-Second International Conference on Artificial Intelligence and Statistics (AISTATS)}, volume~89 of {\em Proceedings of Machine Learning Research}, pages 304--312, 2019.

\bibitem{Hong2017}
Mingyi Hong, Xiangfeng Wang, Meisam Razaviyayn, and Zhi{-}Quan Luo.
\newblock Iteration complexity analysis of block coordinate descent methods.
\newblock {\em Mathematical Programming}, 163(1-2):85--114, 2017.

\bibitem{Wang}
Mingyi Hong, Xianlin Wang, Meisam Razaviyayn, and Zhi-Quan Luo.
\newblock Iteration complexity analysis of block coordinate descent methods.
\newblock {\em Mathematical Programming}, 163:85--114, 2017.

\bibitem{kamri}
Yassine Kamri, Julien~M. Hendrickx, and François Glineur.
\newblock On the worst-case analysis of cyclic coordinate-wise algorithms on smooth convex functions.
\newblock In {\em 2023 European Control Conference (ECC)}, pages 1--8, 2023.

\bibitem{lessard2016IQC}
Laurent Lessard, Benjamin Recht, and Andrew Packard.
\newblock Analysis and design of optimization algorithms via integral quadratic constraints.
\newblock {\em SIAM Journal on Optimization}, 26(1):57--95, 2016.

\bibitem{Li2016ccd}
Xiaojing Li, Tianyang Zhao, Raman Arora, Han Liu, and Mingyi Hong.
\newblock An improved convergence analysis of cyclic block coordinate descent-type methods for strongly convex minimization.
\newblock In {\em Proceedings of the 19th International Conference on Artificial Intelligence and Statistics}, volume~51 of {\em Proceedings of Machine Learning Research}, pages 491--499. PMLR, 2016.

\bibitem{Lin2015ccd}
Q.~Lin, Z.~Lu, and L.~Xiao.
\newblock An accelerated randomized proximal coordinate gradient method and its application to regularized empirical risk minimization.
\newblock {\em SIAM Journal on Optimization}, 25(4):2244--2273, 2015.

\bibitem{nesterov2012Coords}
Yurii Nesterov.
\newblock Efficiency of coordinate descent methods on huge-scale optimization problems.
\newblock {\em SIAM Journal on Optimization}, 22(2):341--362, 2012.

\bibitem{NesterovStich2017}
Yurii Nesterov and Sebastian~U. Stich.
\newblock Efficiency of the accelerated coordinate descent method on structured optimization problems.
\newblock {\em SIAM Journal on Optimization}, 27(1):110--123, 2017.

\bibitem{park2022fixedpoint}
Jisun Park and Ernest~K Ryu.
\newblock Exact optimal accelerated complexity for fixed-point iterations.
\newblock In {\em Proceedings of the 39th International Conference on Machine Learning}, volume 162 of {\em Proceedings of Machine Learning Research}, pages 17420--17457. PMLR, 17--23 Jul 2022.

\bibitem{Saha2013cdd}
A.~Saha and A.~Tewari.
\newblock On the nonasymptotic convergence of cyclic coordinate descent methods.
\newblock {\em SIAM Journal on Optimization}, 23(1):576--601, 2013.

\bibitem{shi2017coords}
Z.~Shi and R.~Liu.
\newblock Better worst-case complexity analysis of the block coordinate descent method for large scale machine learning.
\newblock In {\em 2017 16th IEEE International Conference on Machine Learning and Applications (ICMLA)}, pages 889--892. IEEE, 2017.

\bibitem{Sun2015coord}
Ruoyu Sun and Mingyi Hong.
\newblock Improved iteration complexity bounds of cyclic block coordinate descent for convex problems.
\newblock In {\em Advances in Neural Information Processing Systems}, volume~28. Curran Associates, Inc., 2015.

\bibitem{taylor2019stochastic}
Adrien Taylor and Francis Bach.
\newblock Stochastic first-order methods: Non-asymptotic and computer-aided analyses via potential functions.
\newblock In {\em Proceedings of the 32nd Conference on Learning Theory (COLT)}, pages 2934--2992, 2019.

\bibitem{taylor2017thesis}
Adrien~B. Taylor.
\newblock {\em Convex Interpolation and Performance Estimation of First-order Methods for Convex Optimization}.
\newblock PhD thesis, Université catholique de Louvain, 2017.

\bibitem{taylor2017smooth}
Adrien~B. Taylor, Julien~M. Hendrickx, and Fran{\c{c}}ois Glineur.
\newblock Smooth strongly convex interpolation and exact worst-case performance of first-order methods.
\newblock {\em Mathematical Programming}, 161(1-2):307--345, 2017.

\bibitem{teboulle2023elementary}
Marc Teboulle and Yakov Vaisbourd.
\newblock An elementary approach to tight worst-case complexity analysis of gradient-based methods.
\newblock {\em Mathematical Programming}, 201(1-2):63--96, 2023.

\bibitem{Wright2015}
Stephen~J. Wright.
\newblock Coordinate descent algorithms.
\newblock {\em Mathematical Programming}, 151(1):3--34, 2015.

\bibitem{Wright2020ccd}
Stephen~J. Wright and Ching-Pei Lee.
\newblock Analyzing random permutations for cyclic coordinate descent.
\newblock {\em Mathematics of Computation}, 89(325):2217--2248, January 2020.

\end{thebibliography}

\appendix
\section*{Appendix}
\renewcommand{\thesection}{A} 

Here, you can provide additional proofs, technical lemmas, or experimental results that support the main text.

\subsection*{A.1 Proof of Lemma~\ref{lm:coord_lower_bound}}

The following proof is inspired from the proof of \cite[Theorem 2.34]{taylor2017thesis} where a similar result for $L$-smooth convex functions is derived. We define the subspace $S_{\neq \ell} = \{x \in \mathbb{R}^d, U_{j}^{\top}x = 0, \forall j \in {1,\dots,p},\; j\neq \ell\}$  and the extended function $p_{\ell}(x) = \frac{\|U_{\ell}^{T}x\|^2}{2} + \mathbb{I}_{S_{\neq \ell}}(x)$,  
with $\mathbb{I}_{S_{\neq \ell}}(x) = \begin{cases}
        0 & \text{if} \; x \in S_{\neq \ell} \\
        + \infty & \text{otherwise}
    \end{cases}$.

\noindent Since $f$ belongs to $\mathcal{F}^{\text{coord}}_{0,\mathbf{L}}(\mathbb{R}^d)$, it satisfies condition~\eqref{eq:coord_upper_bound}. We claim that we can rewrite this condition as follows:
\begin{equation}\label{eq:cond_p}
    f(x+\Delta x) \leqslant f(x) + \langle \nabla f(x),\Delta x \rangle + L_{\ell} p_{\ell}(\Delta x)  \;\;  \forall x \in \mathbb{R}^d, \; \forall \Delta x \in \mathbb{R}^d,
\end{equation}
which now involves the full gradient of $f$ as opposed to the partial gradient $\nabla^{(\ell)} f$, and must be satisfied for all $\Delta x \in \mathbb{R}^d$ as opposed to the $U_{\ell}h^{(\ell)}$ that are only nonzero along the block $\ell$. Indeed, if $\Delta x$ is only nonzero along the coordinates of the $\ell^{th}$ block, then 
$p_{\ell}(\Delta x) = \frac{\|U_{\ell}^{T}\Delta x\|^2}{2}$, $\langle \nabla f(x),\Delta x \rangle = \langle \nabla^{(\ell)} f(x), U_{\ell}^{T}\Delta x \rangle$ and we recover \eqref{eq:coord_upper_bound}. For all other values of $\Delta x$, $p_\ell(\Delta x)$ is infinite, and \eqref{eq:cond_p} is trivially satisfied. For any fixed $x \in \mathbb{R}^d$, let us now define
\begin{equation*}
    h_x(\Delta x) = f(x+\Delta x) - f(x) - \langle \nabla f(x),\Delta x \rangle.
\end{equation*}
Condition \eqref{eq:cond_p} becomes then $h_x(\Delta x) \leqslant L_{\ell} p_{\ell}(\Delta x), \; \forall x, \Delta x \in \mathbb{R}^d$. Note that both $h_x$ and $p_\ell$ are convex functions w.r.t. $\Delta x $. As the Fenchel conjugate (with respect to $\Delta x$) reverses the order between convex functions, this is equivalent to:
\begin{equation}\label{eq:ineq_dual_proof}
     h_x^{*}(\Delta g) \geqslant (L_{\ell} p_{\ell})^*(\Delta g), \; \forall \Delta g \in \mathbb{R}^d.
\end{equation}
First, we compute the Fenchel conjugate  $(L_{\ell} p_{\ell})^*(\Delta g)$.
For any $\Delta x \in \mathbb{R}^d$ such that $\Delta x \notin S_{\neq \ell}$, we clearly have $\langle \Delta g,\Delta x \rangle - L_{\ell} p_{\ell}(\Delta x) = - \infty$. For the case where $\Delta x \in S_{\neq \ell}$, we have that:
\begin{equation*}
\begin{aligned}
        \forall \Delta x \in S_{\neq \ell},\; \Delta g \in \mathbb{R}^d, \; T(\Delta x,\Delta g) &:= \langle \Delta g,\Delta x \rangle - L_{\ell} p_\ell(\Delta g) \\
        & = \Delta g^T U_{\ell} U_{\ell}^T \Delta x - \frac{L_{\ell}}{2} \Delta x^{T} U_{\ell} U_{\ell}^{T} \Delta x \\
        & = \Delta g^T M_{\ell} \Delta x - \frac{L_{\ell}}{2} \Delta x^{T} M_{\ell} \Delta x
\end{aligned}
\end{equation*}
with $M_{\ell} = U_{\ell} U_{\ell}^T$ is an orthogonal projection matrix i.e $M_{\ell} = M_{\ell}^{T} = M_{\ell}^2$. Computing the derivative of $T$ with respect to $\Delta x$ and setting it to zero we get that the maximum, $\Delta x^*$,  verifies $\Delta x^* = \frac{1}{L_\ell} M_\ell \Delta g \in S_{\neq \ell}$. Using this we get that $\Delta g^T M_{\ell} \Delta x^* = \frac{1}{L_{\ell}} \Delta g^T M_{\ell} \Delta g$ and
\begin{equation*}
  \frac{L_{\ell}}{2}  \Delta x^{*T} M_{\ell} \Delta x^* = \frac{L_{\ell}}{2} \Delta x^{*T} M_{\ell}^2 \Delta x^* = \frac{L_{\ell}}{2} (\Delta x^{*T} M_{\ell}^{T}) (M_{\ell} \Delta x^*) = \frac{1}{2L_{\ell}} \Delta g^T M_{\ell} \Delta g.
\end{equation*}
Putting everything together we have that:
\begin{equation*}
\begin{aligned}
\forall \Delta g \in \mathbb{R}^{d}, \; (L_{\ell} p_{\ell})^*(\Delta g) &= \sup_{\Delta x \in \mathbb{R}^{d}} \langle \Delta g,\Delta x) - L_{\ell} p_{\ell}(\Delta x) \\
&=  \sup_{\Delta x \in S_{\neq \ell}} \langle \Delta g,\Delta x) - L_{\ell} p_{\ell}(\Delta x)\\
& = \frac{1}{2L_{\ell}} \Delta g^{T} M_{\ell} \Delta g = \frac{1}{2L_{\ell}} ||U_{\ell}^T\Delta g||^2.
\end{aligned}
\end{equation*}
Let us now compute an expression of the Fenchel conjugate of $h_x$ with respect to Fenchel conjugate $f^*$ of $f$:
\begin{equation*}
    \begin{aligned}
          h^*_x(\Delta g) &=\sup_{\Delta x \in \mathbb{R}^d} \langle \Delta g,\Delta x\rangle -f(x+\Delta x) + f(x) + \langle \nabla f(x),\Delta x \rangle \\
          &= f(x) + \sup_{\Delta x \in \mathbb{R}^d} \langle \nabla f(x) + \Delta g,\Delta x\rangle  -f(x+\Delta x) \\
          & =  f(x) + \sup_{x' \in \mathbb{R}^d} \langle \nabla f(x) + \Delta g, x'-x\rangle  -f(x') \; \; \text{with}\; x' = x+\Delta x \\
          & = f(x) - \langle \nabla f(x) + \Delta g, x) + \sup_{x' \in \mathbb{R}^d} \langle \nabla f(x) + \Delta g, x' \rangle - f(x') \\
          & = f(x) - \langle \nabla f(x) + \Delta g,x\rangle + f^*(\nabla f(x) + \Delta g).
    \end{aligned}
\end{equation*}
By Young's equality, we have that $f^*(\nabla f(x)) = f(x) - \langle \nabla f(x),x\rangle$ which gives us $    h^*_x(\Delta g) = f^*(\nabla f(x) + \Delta g) - f^*(\nabla f(x)) - \langle x,\Delta g\rangle$. Finally, we obtain from inequality~\eqref{eq:ineq_dual_proof} that condition \eqref{eq:cond_p} is equivalent to
\begin{equation*}
    \forall x \in \mathbb{R}^d, \; \forall \Delta g \in  \mathbb{R}^d, \; f^*(\nabla f(x) + \Delta g) \geqslant f^*(\nabla f(x)) + \langle x,\Delta g\rangle + \frac{1}{2L_{\ell}} \|U_{\ell}^{T}\Delta g\|^2.
\end{equation*}
In particular, for any $x_1,x_2 \in \mathbb{R}^d$, choosing $x = x_2$ and $\Delta g = \nabla f(x_1) - \nabla f(x_2)$ in the previous inequality, we have that:
\begin{equation*}
\begin{aligned}
f^*(\nabla f(x_1)) &\geq f^*(\nabla f(x_2)) + \langle x_2, \nabla f(x_1) - \nabla f(x_2) \rangle \\
&\quad + \frac{1}{2L_{\ell}} \|\nabla^{(\ell)} f(x_1) - \nabla^{(\ell)} f(x_2) \|^2, \quad \forall x_1, x_2 \in \mathbb{R}^d.
\end{aligned}
\end{equation*}
By Young's equality again, we have that:
\begin{equation*}
    f^*(\nabla f(x_1)) = \langle x_1,\nabla f(x_1)\rangle  - f(x_1),
\end{equation*}
\begin{equation*}
    f^*(\nabla f(x_2)) = \langle x_2,\nabla f(x_2)\rangle  - f(x_2).
\end{equation*}
Injecting this two inequalities in the previous inequality, we find that:
\begin{equation*}
\begin{aligned}
    f(x_2) &\geq f(x_1) + \langle \nabla f(x_1), x_2 - x_1 \rangle \\
    &+ \frac{1}{2L_{\ell}} \|\nabla^{(\ell)} f(x_1) - \nabla^{(\ell)} f(x_2) \|^2, \quad \forall x_1, x_2 \in \mathbb{R}^d.
\end{aligned} 
\end{equation*}
which concludes the proof.
\qed

\subsection*{A.2 Proof of Theorem~\ref{th:coord_conds}}
 The implication \( (1) \implies (2) \) follows from Lemma~\ref{lm:coord_upper_bound}, and \( (2) \implies (3) \) is given by Lemma~\ref{lm:coord_lower_bound}. To complete the proof, we show \( (3) \implies (1) \). Assume \( f \) satisfies condition \( (3) \). Then, for all \( x_1, x_2 \in \mathbb{R}^d \),
\begin{equation*}
    f(x_2) \geq f(x_1) + \langle \nabla f(x_1), x_2 - x_1 \rangle.
\end{equation*}
Now, applying condition \( (3) \) with \( x_2 = x + U_{\ell} h^{(\ell)} \) and $x_1 = x$, for any \( x \in \mathbb{R}^d \) and \( h^{(\ell)} \in \mathbb{R}^{d_{\ell}} \), we get:
\begin{equation*}
    f(x + U_{\ell} h^{(\ell)}) \geq f(x) + \langle \nabla f(x), U_{\ell} h^{(\ell)} \rangle + \frac{1}{2L_{\ell}} \|\nabla^{(\ell)} f(x) - \nabla^{(\ell)} f(x + U_{\ell} h^{(\ell)})\|^2.
\end{equation*}
Reversing the roles of \( x \) and \( x + U_{\ell} h^{(\ell)} \) yields:
\begin{equation*}
\begin{aligned}
f(x) &\geq f(x + U_{\ell} h^{(\ell)}) + \langle \nabla f(x + U_{\ell} h^{(\ell)}), -U_{\ell} h^{(\ell)} \rangle \\ 
&+ \frac{1}{2L_{\ell}} \|\nabla^{(\ell)} f(x) - \nabla^{(\ell)} f(x + U_{\ell} h^{(\ell)})\|^2.
\end{aligned}
\end{equation*}
Adding these two inequalities gives:
\begin{equation*}
    \frac{1}{L_{\ell}} \|\nabla^{(\ell)} f(x) - \nabla^{(\ell)} f(x + U_{\ell} h^{(\ell)})\|^2 \leq \langle \nabla f(x + U_{\ell} h^{(\ell)}) - \nabla f(x), U_{\ell} h^{(\ell)} \rangle.
\end{equation*}
By the Cauchy-Schwarz inequality, we have:
\begin{equation*}
    \langle \nabla f(x + U_{\ell} h^{(\ell)}) - \nabla f(x), U_{\ell} h^{(\ell)} \rangle \leq \| \nabla f(x + U_{\ell} h^{(\ell)}) - \nabla f(x) \| \| U_{\ell} h^{(\ell)} \|,
\end{equation*}
which implies:
\begin{equation*}
    \|\nabla^{(\ell)} f(x + U_{\ell} h^{(\ell)}) - \nabla^{(\ell)} f(x)\| \leq L_{\ell} \| U_\ell h^{(\ell)} \|.
\end{equation*}
Since \( \| U_{\ell} h^{(\ell)} \| = \| h^{(\ell)} \| \), this proves \( f \in \mathcal{F}^{\text{coord}}_{0,\mathbf{L}}(\mathbb{R}^d) \).\qed

\subsection*{A.3 Proof of Theorem~\ref{th:coord_cond_suff_2}}
We start by proving that our interpolation conditions are necessary and sufficient for the sets of cardinality $2$. The necessity follows from Theorem~\ref{th:interp_coord}. We establish sufficiency by constructing an explicit example of an interpolating function defined via its Fenchel conjugate. Note that the following propositions are equivalent:
\begin{enumerate}[itemsep=10pt]
    \item Function \( f \) interpolates the set \( \{(x_i, g_i, f_i)\}_{i=1,2} \).
    \item The Fenchel conjugate \( f^* \) of function \( f \) interpolates the set \( \{(g_i, x_i, \langle x_i, g_i \rangle - f_i)\}_{i=1,2} \).
\end{enumerate}
This follows from Young's equality. We assume, without loss of generality, that
\begin{equation*}
    f_1 - f_2 - \langle g_2, x_1 - x_2 \rangle \geq f_2 - f_1 - \langle g_1, x_2 - x_1 \rangle.
\end{equation*}
Since the problem is symmetric in \((f_1,g_1,x_1)\) and \((f_2,g_2,x_2)\), the remainder of the proof remains valid by switching the roles of the variables if the previous inequality is reversed. Consider the function:
\begin{equation}
    f^*(g) = 
    \begin{cases}
        \phi(\lambda) & \text{if } g = \lambda g_2 + (1 - \lambda) g_1, \; \lambda \in [0,1], \\
        \infty & \text{otherwise}
    \end{cases}
\end{equation}
where:
\begin{equation*}
    \phi(\lambda) = (f_1 - f_2 - \langle g_2, x_1 - x_2 \rangle) \lambda^2 + \langle x_1, g_2 - g_1 \rangle \lambda + \langle g_1, x_1 \rangle - f_1.
\end{equation*}
This function is only finite on the segment between $g_1$ and $g_2$. It is easy to verify that $f^*(g_1) = \phi(0) = \langle g_1, x_1 \rangle - f_1, \; f^*(g_2) = \phi(1) = \langle g_2, x_2 \rangle - f_2$. Let us now prove that \( x_1 \) and \( x_2 \) are subgradients of $f^*$ at the points \( g_1 \) and \( g_2 \), respectively which means that for any $g \in \mathbb{R}^d$
\begin{equation*}
    f^*(g) \geq f^*(g_1) + \langle x_1, g-g_1 \rangle,
\end{equation*}
\begin{equation*}
    f^*(g) \geq f^*(g_2) + \langle x_2, g-g_2 \rangle.
\end{equation*}
Consider the first case where \( g \in \mathbb{R}^d \) and \( g \neq \lambda g_2 + (1 - \lambda) g_1 \) for any \( \lambda \in [0,1] \). In this case, we have \( f^*(g) = + \infty \), and the subgradient inequalities are trivially satisfied. Now, consider the case where \( g = \lambda g_2 + (1 - \lambda) g_1 \). Since $(f_1 - f_2 - \langle g_2, x_1 - x_2 \rangle) \geq 0$, \( \phi \) is a convex quadratic function and we have $\phi(\lambda) \geq \phi(0) +\phi'(0) \lambda$ which is equivalent to $f^*(g) \geq f^*(g_1) + \langle x_1, g_2 - g_1 \rangle \lambda$. Simple computations based on $g = \lambda g_2 + (1-\lambda) g_1$ give $\langle x_1, g_2 - g_1 \rangle \lambda = \langle x_1 , g - g_1 \rangle$, which then leads to $f^*(g) \geq f^*(g_1) + \langle x_1, g - g_1 \rangle$. Thus, \( x_1 \) is a subgradient of $f^*$ at \( g_1 \). Similarly, from the convexity of \( \phi \), we obtain $\phi(\lambda) \geq \phi(1) + \phi'(1) (\lambda -1)$, which is equivalent to:
\begin{equation*}
    f^*(g) \geq f^*(g_2) +  ( 2 (f_1 - f_2 - \langle g_2, x_1 - x_2 \rangle)  + \langle x_1, g_2 - g_1 \rangle ) (\lambda - 1).
\end{equation*}
Since we assume that $f_1 - f_2 - \langle g_2, x_1 - x_2 \rangle \geq f_2 - f_1 - \langle g_1, x_2 - x_1 \rangle$, it follows by adding $f_1 - f_2 - \langle g_2, x_1 - x_2 \rangle$ on both sides that:
\begin{equation}
    2 (f_1 - f_2 - \langle g_2, x_1 - x_2 \rangle) \geq \langle x_2 - x_1 , g_2 - g_1 \rangle.
\end{equation}
This implies $f^*(g) \geq f^*(g_2) + ( \langle x_2 -x_1, g_2 - g_1 + \rangle + \langle x_1,g_2-g_1\rangle) (\lambda - 1 )$, which is equivalent to $f^*(g) \geq f^*(g_2) + \langle x_2, g_2 - g_1 \rangle (\lambda - 1)$. Since $\langle x_2, g_2 - g_1 \rangle (\lambda - 1) = \langle x_2, g - g_2 \rangle$, we obtain $f^*(g) \geq f^*(g_2) + \langle x_2, g - g_2 \rangle$, and \( x_2 \) is a subgradient at \( g_2 \). Hence function \( f^* \) interpolates the set \( \{(g_i, x_i, \langle x_i, g_i \rangle - f_i)\}_{i=1,2} \), which implies that \( f = (f^*)^* \), defined as:
\begin{equation*}
    f(x) = \sup_{g \in \mathbb{R}^d} \langle x, g \rangle - f^*(g), \quad \forall x \in \mathbb{R}^d,
\end{equation*}
also interpolates the set \( \{(x_i, g_i, f_i)\}_{i=1,2} \). By definition of $f^*$, \( f \) takes finite values for all \( x \in \mathbb{R}^d \), and for any $x \in \mathbb{R}^d$, there exists \( g_x = \lambda_x g_2 + (1 - \lambda_x) g_1 \in \mathbb{R}^d \) such that:
\begin{equation}\label{eq:young_coord}
    f(x) = \langle x, g_x \rangle - f^*(g_x).
\end{equation}
By Young's equality, we have \( g_x \in \partial f(x) \) and \( x \in \partial f^*(g_x) \). To complete the proof, it remains to show that \( f \in \mathcal{F}^{\text{coord}}_{0,\mathbf{L}}(\mathbb{R}^d) \). To do so, we first prove that the functions \( g \mapsto f^*(g) - \frac{1}{2L_\ell} \|g^{(\ell)}\|^2 \) are convex and show that this implies \( f \in \mathcal{F}^{\text{coord}}_{0,\mathbf{L}}(\mathbb{R}^d) \). Simple computations yield:
\begin{equation*}
    h_{\ell}(g) = f^*(g) - \frac{1}{2L_\ell} \|g^{(\ell)}\|^2 = 
    \begin{cases}
        \psi_{\ell}(\lambda), & \text{if } g = \lambda g_2 + (1 - \lambda) g_1, \; \lambda \in [0,1]\\
        + \infty, & \text{otherwise},
    \end{cases}
\end{equation*}
where:
\begin{align*}
    \psi_{\ell}(\lambda) = \left(f_1 - f_2 - \langle g_2, x_1 - x_2 \rangle - \frac{1}{2L_\ell} \|g_2^{(\ell)} - g_1^{(\ell)}\|^2\right) \lambda^2 & \\
    + \left(\langle x_1 - \frac{1}{L_\ell} U_\ell g_1^{(\ell)}, g_2 - g_1 \rangle\right) \lambda & \\
    + \langle g_1, x_1 \rangle - f_1 - \frac{1}{2L_\ell} \|g_1^{(\ell)}\|^2.
\end{align*}
Since the set \( \{(x_i, g_i, f_i)\}_{i=1,2} \) satisfies the interpolation conditions~\eqref{eq:coord_interp_conds}, the term  $f_1 - f_2 - \langle g_2, x_1 - x_2 \rangle - \frac{1}{2L_\ell} \|g_2^{(\ell)} - g_1^{(\ell)}\|^2$ is nonnegative. Thus, \( \psi_{\ell} \) is a convex quadratic function, implying that \( h_{\ell} \) is convex. For all \( x \in \mathbb{R}^d \) and \( g_x \) as defined in~\eqref{eq:young_coord}, we have that if $g_x \notin \{g_1,g_2\}$, $f^*$ is differentiable and \( x - \frac{1}{L_{\ell}} U_{\ell} g_x^{(\ell)} \) is the gradient of \( h_{\ell}\) at \( g_x \). Moreover, $x_{1} - \frac{1}{L_{\ell}} U_{\ell} g_{1}^{(\ell)}$ and $x_{2} - \frac{1}{L_{\ell}} U_{\ell} g_{2}^{(\ell)}$ are respectively subgradients of  \( h_{\ell}\) at $g_1$ and $g_2$. Indeed, consider the first case where \( g \in \mathbb{R}^d \) and \( g \neq \lambda g_2 + (1 - \lambda) g_1 \) for any \( \lambda \in [0,1] \). In this case, we have \( h_{\ell}(g) = + \infty \), and the subgradient inequalities
\begin{equation*}
    h_{\ell}(g) \geq h_{\ell}(g_1) + \langle x_1 - \frac{1}{L_{\ell}} U_{\ell} g_{1}^{(\ell)}, g-g_1 \rangle,
\end{equation*}
\begin{equation*}
    h_{\ell}^*(g) \geq h_{\ell}(g_2) + \langle x_2 - \frac{1}{L_{\ell}} U_{\ell} g_{2}^{(\ell)}, g-g_2 \rangle
\end{equation*}
are trivially satisfied. Now, consider the case where \( g = \lambda g_2 + (1 - \lambda) g_1 \). Since \( \phi \) is a convex quadratic function, we have $\psi_{\ell}(\lambda) \geq \psi_{\ell}(0) + \psi_{\ell}'(0) \lambda$, which is equivalent to $h_{\ell}(g) \geq h_{\ell}(g_1) + \langle x_1- \frac{1}{L_{\ell}} U_{\ell} g_{1}^{(\ell)}, g_2 - g_1 \rangle \lambda$. Simple computations give $\langle x_1 - \frac{1}{L_{\ell}} U_{\ell} g_{1}^{(\ell)}, g_2 - g_1 \rangle \lambda = \langle x_1 - \frac{1}{L_{\ell}} U_{\ell} g_{1}^{(\ell)} , g - g_1 \rangle$, which then leads to $h_{\ell}(g) \geq h_{\ell}(g_1) + \langle x_1 - \frac{1}{L_{\ell}} U_{\ell} g_{1}^{(\ell)}, g - g_1 \rangle$. Thus, \( x_1- \frac{1}{L_{\ell}} U_{\ell} g_{1}^{(\ell)} \) is a subgradient of $h_{\ell}$ at \( g_1 \). Similarly, from the convexity of \( \psi_{\ell} \), we obtain $\psi_{\ell}(\lambda) \geq \psi_{\ell}(1) + \psi_{\ell}'(1) (\lambda -1)$, which is equivalent to:
\begin{equation*}
\begin{aligned}
h_{\ell}(g) \geq h_{\ell}(g_2) +  ( 2 (f_1 - f_2 - \langle g_2, x_1 - x_2 \rangle - \frac{1}{2L_\ell} \|g_2^{(\ell)} - g_1^{(\ell)}\|^2 ) & + \\ \langle x_1 - \frac{1}{L_{\ell}} U_{\ell} g_{1}^{(\ell)}, g_2 - g_1 \rangle ) (\lambda - 1).
\end{aligned}
\end{equation*}
Since we assume that $f_1 - f_2 - \langle g_2, x_1 - x_2 \rangle \geq f_2 - f_1 - \langle g_1, x_2 - x_1 \rangle$, it follows that $2 (f_1 - f_2 - \langle g_2, x_1 - x_2 \rangle) \geq \langle x_2 - x_1 , g_2 - g_1 \rangle$, which implies that 
\begin{equation*}
    \begin{aligned}
       2 (f_1 - f_2 - \langle g_2, x_1 - x_2 \rangle - \frac{1}{2L_\ell} \|g_2^{(\ell)} - g_1^{(\ell)}\|^2 ) + \langle x_1 - \frac{1}{L_{\ell}} U_{\ell} g_{1}^{(\ell)}, g_2 - g_1 \rangle & \geq \\ \langle x_2 - x_1 , g_2 - g_1 \rangle  -  \frac{1}{2L_\ell} \|g_2^{(\ell)} - g_1^{(\ell)}\|^2 +  \langle x_1 - \frac{1}{L_{\ell}} U_{\ell} g_{1}^{(\ell)}, g_2 - g_1 \rangle & = \\ \langle x_2 - \frac{1}{L_{\ell}} U_{\ell} g_{2}^{(\ell)}, g_2 - g_1\rangle
    \end{aligned}
\end{equation*}
which gives us $    h_{\ell}(g) \geq h_{\ell}(g_2) + \langle x_2 - \frac{1}{L_{\ell}} U_{\ell} g_{2}^{(\ell)}, g_2 - g_1 \rangle (\lambda - 1)$. Since $     \langle  x_2 - \frac{1}{L_{\ell}} U_{\ell} g_{2}^{(\ell)} , g_2 - g_1 \rangle (\lambda - 1) =  \langle  x_2 - \frac{1}{L_{\ell}} U_{\ell} g_{2}^{(\ell)} , g - g_2 \rangle$, we obtain $   h_{\ell}(g) \geq h_{\ell}(g_2) + \langle x_2 - \frac{1}{L_{\ell}} U_{\ell} g_{2}^{(\ell)}, g - g_2 \rangle$, which proves that $x_2 - \frac{1}{L_{\ell}} U_{\ell} g_{2}^{(\ell)}$ is a subgradient of $h_{\ell}$ at $g_2$.
\noindent By the convexity of $h_{\ell}$, for all \( \ell \in \{1,\dots,p\} \) and for all \( x,y \in \mathbb{R}^d \), we obtain:
\begin{equation*}
    f^*(g_y) - \frac{1}{2L_\ell} \|g_y^{(\ell)}\|^2 \geq f^*(g_x) - \frac{1}{2L_\ell} \|g_x^{(\ell)}\|^2 + \left\langle x - \frac{1}{L_\ell} U_\ell g_x^{(\ell)}, g_y - g_x \right\rangle.
\end{equation*}
By substituting the equalities $f(x) + f^*(g_x) = \langle x,g_x\rangle$ and $ f(y) + f^*(g_y) = \langle y,g_y\rangle$, the previous inequality becomes, for all \( \ell \in \{1,\dots,p\} \) and for all \( x,y \in \mathbb{R}^d \):
\begin{equation*}
    f(x) - f(y) \geq \langle g_y, x-y \rangle + \frac{1}{2L_\ell} \left( \|g_y^{(\ell)}\|^2 - \|g_x^{(\ell)}\|^2 - 2 \langle U_\ell g^{(\ell)}_x , g_y - g_x\rangle  \right).
\end{equation*}
From the definition of the selection matrix \( U_{\ell} \), we have $   \langle U_\ell g^{(\ell)}_x , g_y \rangle = \langle g^{(\ell)}_x, g^{(\ell)}_y \rangle$ and $     \langle U_\ell g^{(\ell)}_x , g_x \rangle = \| g^{(\ell)}_x\|^2$. Thus, for all \( \ell \in \{1,\dots,p\} \) and for all \( x,y \in \mathbb{R}^d \), we obtain:
\begin{equation*}
    f(x) - f(y) \geq \langle g_y, x-y \rangle + \frac{1}{2L_\ell} \|g^{(\ell)}_x - g^{(\ell)}_y\|^2.
\end{equation*}
Since \( g_x \in \partial f(x) \) and \( g_y \in \partial f(y) \), Theorem~\ref{th:coord_conds} ensures that \( f \) belongs to \( \mathcal{F}^{\text{coord}}_{0,\textbf{L}}(\mathbb{R}^d) \), which concludes the proof of the sufficiency of our conditions for sets of cardinality equal to $2$. We now provide a counterexample of a set of triplets of cardinality equal to $3$ that satisfies the interpolation conditions~\eqref{eq:coord_interp_conds} but is not $\mathcal{F}^{\text{coord}}_{0,\textbf{L}}(\mathbb{R}^d)$-interpolable. Consider the set:
\begin{table}[H]
    \centering
    \resizebox{0.7\textwidth}{!}{%
    \begin{tabular}{||c||c||c||}
    \hline
    Points $x_i$ &  Gradients $g_i$ & Function Values $f_i$  \\
    \hline \hline
    $x_1 = (-1,0)^T$ & $g_1 = (-1,0)^T$ & $f_1 = \frac{1}{2}$ \\
    \hline
    $x_2 = (0,0)^T$ &  $g_2 = (0,-1)^T$ & $f_2 = 0$ \\
    \hline
    $x_3 = (1,0)^T$ & $g_3 = (1,0)^T$ &  $f_3 = \frac{1}{2}$  \\
     \hline
    \end{tabular}%
    }
    \caption{\small A set $\mathcal{S}$ satisfying~\eqref{eq:coord_interp_conds} but not interpolable.}
    \label{tab:coord_interp_example}
\end{table}

\noindent The set $\mathcal{S}$ satisfies~\eqref{eq:coord_interp_conds} for $\textbf{L} = (1,1)$. Suppose that there exists a function $f \in \mathcal{F}^{\text{coord}}_{0,\textbf{L}}(\mathbb{R}^2)$ that interpolates $\mathcal{S}$. For any point $x = (-1,y)$, the convexity of $f$ implies that $f(x)  \geqslant f(x_1) + \langle g_1, x-x_1\rangle$, which is equivalent to $f(-1,y) \geq \frac{1}{2}$. Similarly for the points of the form $x = (1,y)$, we have that $f(x) \geq f(x_3) + \langle g_3, x-x_3\rangle$, which is equivalent to $f(1,y) \geq \frac{1}{2}$. By $1$-smoothness of $f$ along the second coordinate, for any point $x = (0,y)$ we have $    f(0,y) \leqslant f(0,0) + g_2^{(2)} y + \frac{y^2}{2} = - y + \frac{y^2}{2}$. Setting $y = 1$, we obtain from all the above:
\begin{equation*}
    f(-1,1) \geqslant \frac{1}{2}, \quad f(0,1) \leqslant -\frac{1}{2}, \quad f(1,1) \geqslant \frac{1}{2}.
\end{equation*}
Define $h(x) = f(x,1)$. Then $h$ is a $1$-smooth convex function satisfying:
\begin{equation*}
    h(-1) \geqslant \frac{1}{2}, \quad h(0) \leqslant -\frac{1}{2}, \quad h(1) \geqslant \frac{1}{2}.
\end{equation*}
Suppose $h'(0) \leq 0$, then since $h$ is a $1$-smooth convex function, we have that
\begin{equation*}
    h(1) \leqslant h(0) + h'(0)(1-0) + \frac{(1-0)^2}{2} \leqslant h(0) + \frac{1}{2} \leqslant 0,
\end{equation*}
which contradicts $h(1) \geqslant \frac{1}{2}$. Similarly, if $h'(0) \geqslant 0$, we have that:
\begin{equation*}
        h(-1) \leqslant h(0) + h'(0)(-1-0) + \frac{(-1-0)^2}{2} \leqslant h(0) + \frac{1}{2} \leqslant 0,
\end{equation*}
which contradicts $h(-1) \geqslant \frac{1}{2}$. Thus, an interpolating function $f \in \mathcal{F}^{\text{coord}}_{0,\textbf{L}}(\mathbb{R}^d)$ for the set $\mathcal{S}$ cannot exist.
\qed 
\end{document}